\documentclass[preprint]{imsart}

\usepackage{amsthm,amsmath,amsfonts,amssymb,mathtools,dsfont}
\usepackage{graphicx}
\usepackage{algorithm2e}
\RestyleAlgo{ruled}
\usepackage{algorithmic}
\usepackage{enumitem}
\usepackage{comment}
\usepackage[dvipsnames]{xcolor}
\usepackage{wrapfig}
\usepackage[sort&compress]{natbib}
\usepackage{bbm}
\usepackage{appendix}
\usepackage{tabularx,multicol,multirow,booktabs,csquotes,comment,mathtools}
\usepackage[margin=1.2in]{geometry}
\usepackage{verbatim}
\usepackage{mathtools,bm}
\usepackage{caption}
\usepackage{subcaption}
\usepackage{float}
\usepackage{yk}
\usepackage{My_MaCrOs}
\usepackage{tikz}
\usepackage{geometry}
\usetikzlibrary{arrows.meta, positioning, shapes.geometric}

\allowdisplaybreaks

\newcommand{\lmn}{\lambda_{\mathrm{min}}}
\newcommand{\lmx}{\lambda_{\mathrm{max}}}
\def\ba{\mathbf{a}}
\def\bbS{\mathbb{S}}
\newcommand{\mrh}{\mathcal{M}_{\rho_n}}
\def\TV{{\sf TV}}
\def\KL{{\sf KL}}
\newcommand{\hns}{\kappa_n^\star}
\newcommand{\cnu}{\mathcal{C}_{n,u}^{(1)}}
\newcommand{\ctu}{\mathcal{C}_{n,u}^{(2)}}
\newcommand{\xs}{x^\star}
\newcommand{\ceilfloor}[1]{\mathinner{\left\lceil #1 \right\rfloor}}


\begin{document}
	\begin{frontmatter}
		\title{Phase Transition in Nonparametric Minimax Rates for Covariate Shifts on Approximate Manifolds}
		\runtitle{Minimax Rates for Covariate Shifts}
        \runauthor{Wang, Deb, and Mukherjee}
		
		 \begin{aug}
			\author{\fnms{Yuyao} \snm{Wang}\ead[label=e1]{yuyaow@bu.edu}},
            \author{\fnms{Nabarun} \snm{Deb}\ead[label=e2]{nabarun.deb@chicagobooth.edu}},
		 	\and
		 	\author{\fnms{Debarghya} \snm{Mukherjee}\ead[label=e3]{mdeb@bu.edu}}
		 \end{aug}
        \address{Boston University\printead[presep={,\ }]{e1,e3}}
        \address{University of Chicago\printead[presep={,\ }]{e2}}
  
		\begin{abstract}
        \noindent We study nonparametric regression under covariate shift with structured data, where a small amount of labeled target data is supplemented by a large labeled source dataset. In many real-world settings, the covariates in the target domain lie near a low-dimensional manifold within the support of the source, e.g., personalized handwritten digits (target) within a large, high-dimensional image repository (source). Since density ratios may not exist in these settings, standard transfer learning techniques often fail to leverage such structure. This necessitates the development of methods that exploit both the size of the source dataset and the structured nature of the target.

Motivated by this, we establish new minimax rates under covariate shift for estimating a regression function in a general Hölder class, assuming the target distribution lies near—but not exactly on—a smooth submanifold of the source. General smoothness helps reduce the curse of dimensionality when the target function is highly regular, while approximate manifolds capture realistic, noisy data. We identify a phase transition in the minimax rate of estimation governed by the distance to the manifold, source and target sample sizes, function smoothness, and intrinsic versus ambient dimensions. We propose a local polynomial regression estimator that achieves optimal rates on either side of the phase transition boundary. Additionally, we construct a fully adaptive procedure that adjusts to unknown smoothness and intrinsic dimension, and attains nearly optimal rates. Our results unify and extend key threads in covariate shift, manifold learning, and adaptive nonparametric inference.
        \end{abstract}
		
			
                
		
	\end{frontmatter}
	
	\maketitle
	
	\section{Introduction}

In many real-world scenarios, data from the task of interest (namely, ``target task") is often scarce or expensive to obtain, while abundant data is available from related auxiliary tasks (namely, ``source tasks"). This offers a valuable opportunity to borrow strength across tasks for improved inference, \emph{provided that the source tasks are indeed similar to the target task}. 
Transfer learning aims to develop statistically principled methods for borrowing relevant information from related tasks. The appropriate strategy for information transfer depends on the nature of distributional differences between the source and target domains, which gives rise to various paradigms such as covariate shift, label shift, posterior drift, and concept shift. 
In this paper, we focus on the setting of \emph{covariate shift}, where the marginal distributions of the covariates differ between the source and target tasks, while the conditional distribution of the response given the covariates remains invariant across tasks. A popular example arises in handwriting recognition (see \cite{courty2016optimal,zhang2019sequence}) since the covariate distribution --- which governs the variability in handwriting style, resolution, and stroke characteristics, differs across individuals, while the conditional distribution --- mapping a digit image to its true label, remains consistent as the semantic meaning of the image does not depend on its source. Covariate shift has seen applications in numerous other fields, including medical diagnosis (see \cite{guan2021domain,christodoulidis2016multisource}) where distribution mismatch arises across different hospitals, demographic groups, clinical protocols; in natural language processing (see \cite{jiang2007instance,ruder2019transfer}) where the training data often consists of labeled news articles whereas the test data may arise from scientific articles with different vocabulary, syntax, discourse patterns; in computer vision (see \cite{li2020transfer,wang2022transfer}) where differences arise across resolution, lighting, background context; in speech recognition (see \cite{shivakumar2020transfer,sugiyama2012machine}) where shifts happen across camera types, acoustic conditions, to name a few.

Despite its growing practical appeal, a comprehensive theoretical understanding of nonparametric estimation under covariate shift settings is yet to be addressed, particularly in large dimensions, where one expects geometric and smoothness constraints to help mitigate the curse of dimensionality. In this paper, we take a step toward bridging this gap by analyzing how source samples can aid in estimating the conditional mean (i.e., the Bayes optimal decision rule) of response given covariates, in noisy but structured target domains. 
To motivate the particulars of our setting, we consider one source and one target, where we denote the source distribution by $(X, Y) \sim P$ and the target distribution by $(X, Y) \sim Q$. As we are considering covariate shift, we aim to estimate the common conditional mean function $f^\star(x) := \bbE_P[Y \mid X = x] = \bbE_Q[Y \mid X = x]$ when the covariate distributions $P_X$ and $Q_X$, are unequal.
Let us consider two extreme scenarios which highlight the challenges involved in exploiting the source samples. In the first, all source samples are informative—for instance, when $f^\star(x) = g(x; \theta_*)$ is parametric and the supports of $P_X$ and $Q_X$ share the same intrinsic Euclidean dimension. Here the parameter $\theta_*$ can be estimated at the parametric rate $\sqrt{n_P + n_Q}$ (under other standard assumptions) as typically the distribution of $X$ becomes ancillary. In contrast, when $f^\star$ is estimated nonparametrically and the supports of $P_X$ and $Q_X$ are disjoint, source samples offer little to no value due to the inherently local nature of nonparametric estimators. 
Consequently, in the nonparametric setting, information from the source domain can contribute to estimating $f^\star$ at a target point, say $\xs$,  only if some source samples lie within a small neighborhood of $\xs$. It is therefore necessary to quantify some notion of ``alignment" between the source and the target covariate distributions $P_X$ and $Q_X$ in order to leverage the source data. Popular choices to quantify the alignment include bounds on the density ratio  $dQ_X/dP_X$ (see \cite{Sugiyama2008,cortes2010learning}) or $f$-divergence bounds between $Q_X$ and $P_X$ (see \cite{mansour2009domain,sugiyama2007direct}). However, in structured domains where the source and target have different intrinsic dimensions, these density ratios do not exist. Consequently, the above approaches fail to capture the benefit of transfer. This necessitates the development of new estimators and more fine-grained analysis to obtain optimal rates of transfer in the absence of density ratios.

Motivated by this observation, we study the minimax rates for estimating  $f^\star$ when it belongs to a general H\"{o}lder class of \emph{arbitrary smoothness} $\beta>0$, and the support of the target ($Q_X$) lies on an \emph{approximate low-dimensional manifold} (see \cref{def:appmanif}) inside the support of the source ($P_X$). At the core of our manifold assumption is the well established ``manifold hypothesis" (see \cite{narayanan2010sample,fefferman2016testing}) which serves as the basis for manifold learning (see \cite{belkin2003laplacian,Genovese2012}). It states that high-dimensional data tend to lie in the vicinity of a low-dimensional manifold. A lot of empirical work has thus been devoted to exploiting such low-dimensional structure in the context of transfer learning (see \cite{baktashmotlagh2014domain,wang2011heterogeneous,huo2021manifold,long2013adaptation}); also see \cite{kpotufe2021marginal,pathak2022} for more theoretical treatments. Unlike idealized settings where the target data is assumed to lie exactly on a low-dimensional submanifold, real-world data often arises from noisy embeddings or perturbations (see \cite{yu2008gaussian,moon2019visualizing}) that render such assumptions unrealistic. This motivates us to adopt the flexible framework of an approximate manifold. 
Interestingly, we show that the minimax rates in this setting exhibit a phase transition with respect to the ``distance" between the target domain and the manifold (see \cref{sec:contrib} and \cref{fig:ratesum} for details). On the other hand, the $\beta$-H\"{o}lder regularity assumption on $f^\star$ is a popular assumption in nonparametric statistics (see Chapter 1 of \cite{Tsybakov2009}) aimed at mitigating the curse of dimensionality in complex domains. In the context of transfer learning, \cite{pathak2022,kpotufe2021marginal,kalavasis2024transfer} impose such regularity assumptions on $f^*$. However, these papers restrict to $\beta\in (0,1]$ whereas we allow for general regularity $\beta>0$, thereby providing greater reduction in the curse of dimensionality for more smooth functions.

\subsection{Main contributions}\label{sec:contrib}
\begin{figure}[h!]
\centering
\begin{tikzpicture}[
  node distance=0.8cm and 0.6cm,  
  every node/.style={font=\scriptsize, align=center},
  box/.style={rectangle, draw, thick, minimum width=3.5cm, minimum height=1cm, inner sep=2pt},
  root/.style={box, fill=blue!10},
  branch/.style={box, fill=green!10},
  condition/.style={box, fill=yellow!15},
  ratebox/.style={box, fill=orange!15},
  arrow/.style={thick, -{Latex[round]}}
]

\node[root] (root) {Benefits of transfer};

\node[branch, below left=of root, xshift=0.2cm] (equalD) {$d = D$};
\node[branch, below right=of root, xshift=-0.2cm, minimum height=1cm, minimum width=3.5cm] (lessD) {$d < D$};

\node[ratebox, below=of equalD] (noBenefit) {
Minimax rate:\\
$(n_P + n_Q)^{-\frac{2\beta}{2\beta + D}}$\\ \\
Same as {\color{red}(NP)}: \\ $(n_P+n_Q)^{-\frac{2\beta}{2\beta+D}}$\\ \\
Faster than {\color{red}(TR)}: \\$n_Q^{-\frac{2\beta}{2\beta + d}}$
};

\node[condition, below=of lessD,  xshift=-3.8cm] (smallrho) {
$\rho_n \ll {\color{blue}n_{\rm eff}}^{-\frac{1}{2\beta + d}}$
};

\node[condition, below=of lessD, xshift=0.8cm] (largerho) {
$\rho_n \gg {\color{blue}n_{\rm eff}}^{-\frac{1}{2\beta + d}}$
};

\node[ratebox, below=of smallrho] (rateSmall) {
Minimax rate:\\
$\left(n_P^{\frac{2\beta + d}{2\beta + D}} + n_Q\right)^{-\frac{2\beta}{2\beta + d}}$\\ \\
Faster than {\color{red}(NP)}: \\ $(n_P + n_Q)^{-\frac{2\beta}{2\beta + D}}$\\ \\
Faster than {\color{red}(TR)}: \\ $n_Q^{-\frac{2\beta}{2\beta + d}}$
};

\node[ratebox, below=of largerho, minimum height=3.17cm] (rateLarge) {
Minimax rate:\\
$\left(n_P + n_Q \rho_n^{d - D}\right)^{-\frac{2\beta}{2\beta + D}}$\\ \\
Faster than {\color{red}(NP)}: \\ $(n_P + n_Q)^{-\frac{2\beta}{2\beta + D}}$\\ \\
Faster than {\color{red}(TR)}: \\ $(n_Q \rho_n^{d - D})^{-\frac{2\beta}{2\beta + D}}$
};

\draw[arrow] (root) -- (equalD);
\draw[arrow] (root) -- (lessD);
\draw[arrow] (equalD) -- (noBenefit);
\draw[arrow] (lessD) -- (smallrho);
\draw[arrow] (lessD) -- (largerho);
\draw[arrow] (smallrho) -- (rateSmall);
\draw[arrow] (largerho) -- (rateLarge);

\end{tikzpicture}
\caption{Minimax rates under different transfer regimes, with the effective sample size {\color{blue}$n_{\rm eff}=n_P^{(2\beta + d)/(2\beta + D)} + n_Q$}. Here {\color{red}(NP)} denotes the standard pointwise nonparametric rate for estimating a $\beta$-smooth function on $\R^D$. On the other hand, {\color{red} (TR)} denotes the appropriate target only rates.}
\label{fig:ratesum}
\end{figure}
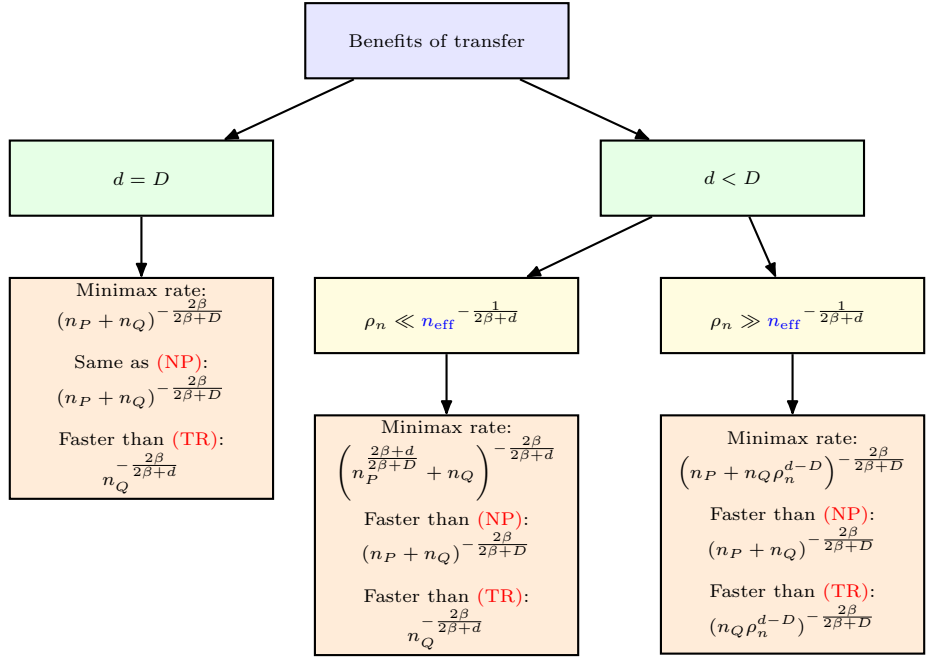

To provide a quantitative description of our results, we introduce some heuristic notation. Let us write $n_P$ and $n_Q$ for the source and target sample sizes, $n:=n_P+n_Q$, $D$ for the underlying ambient dimension, and $\rho_n$ for the distance between the target domain and the underlying $d$-dimensional smooth manifold (see Definition \ref{def:appmanif} for details). In particular, $\rho_n = 0$ implies that the target covariates lie exactly on a low-dimensional manifold. 
We denote by $\beta$, the H\"{o}lder smoothness of $f^\star$ locally at the point $\xs$ in the target domain, and our goal is to estimate $f^\star(\xs)$. 
We refer the reader to \cref{sec:modelasn} for a detailed description of the above notation and assumptions on our data generating process. We now summarize our main contributions below (also see \cref{fig:ratesum} for an illustration): 
\begin{enumerate}
    \item We establish a phase transition phenomenon in the rate of estimation of $f^\star(\xs)$ in terms of $\rho_n$. To wit, define $\hns$ as: 
    \begin{equation}
    \label{eq:kappa_n}
    \textstyle
    \hns := (n_P^{\frac{2\beta+d}{2\beta+D}}+n_Q)^{-\frac{1}{2\beta+d}} =: n_{\rm eff}^{-\frac{1}{2\beta + d}}\,.
    \end{equation}
    When $\rho_n\gg \hns$, the effect of $\rho_n$ appears in the minimax optimal rate of estimation of $f^\star(\xs)$. In particular, we show in Theorems \ref{thm:approx_manifold_upper_bound} and \ref{thm:minmaxlb} 
 that the minimax rate of estimation is: 
 $$
  \inf_{\hat f} \sup_{f^\star \in \Sigma(\beta, L)} \bbE\left[(\hat f(\xs) - f^\star(\xs))^2\right] \asymp \left(n_P+n_Q \rho_n^{d-D}\right)^{-\frac{2\beta}{2\beta + D}} \,.
 $$
 Here $\Sigma(\beta, L)$ denotes the class of $\beta$-H\"{o}lder functions at $\xs$ with H\"{o}lder norm bounded by $L$. 
 This rate has a couple of interesting implications. Firstly, if we are in a classical nonparametric regression setting with no distribution-shift, i.e., the target only setting where $n_P=0$, then this shows a minimax rate of convergence of the order $(n_Q\rho_n^{d-D})^{-\frac{2\beta}{2\beta+D}}$. To the best of our knowledge, this rate is itself new in the literature. In the presence of abundant source data, the above rate $(n_P+n_Q\rho_n^{d-D})^{-2\beta/(2\beta+D)}$ is much faster than the target only rate of $(n_Q\rho_n^{d-D})^{-2\beta/(2\beta+D)}$, which highlights the benefit of transfer. 
 Secondly, we note that a naive kernel-based estimator that simply pools the source and target samples as if they lie in a $D$-dimensional space would result in a slower $(n_P+n_Q)^{-\frac{2\beta}{2\beta+D}}$ rate of convergence. Instead, we show that the minimax rate is faster as 
 $$
 \left(n_P+n_Q \rho_n^{d-D}\right)^{-\frac{2\beta}{2\beta + D}} \ll \left(n_P+n_Q\right)^{-\frac{2\beta}{2\beta + D}} \qquad \text{for } \rho_n \to 0 \, \mbox{and }\, d<D,  
 $$
 The upper bound is attained by a local polynomial regression estimator (see \cite{fan1992,Fan1995}) with an appropriate choice of bandwidth. The estimator is easily computable in polynomial time, making it practically appealing.
    \vspace{.5em}
    \item When $\rho_n \ll \hns$, Theorems \ref{thm:approx_manifold_upper_bound} and \ref{thm:minmaxlb} show that the minimax rate is
    $$
    \inf_{\hat f} \sup_{f^\star \in \Sigma} \bbE\left[(\hat f^\star(\xs) - f^\star(\xs))^2\right] \asymp n_{\rm eff}^{-\frac{2\beta}{2 \beta + d}} = \big(n_P^\frac{2\beta+d}{2\beta+D}+n_Q\big)^{-\frac{2\beta}{2\beta+d}}\,,
    $$
    and hence it no longer depends on $\rho_n$. 
    Once again, $\Sigma$ here denotes the class of $\beta$-H\"{o}lder functions at $\xs$. 
    This implies that for sufficiently small $\rho_n$ (i.e. $\rho_n\ll \kappa_n^\star$), the rate matches that of the idealized case where $\rho_n = 0$, i.e., when the target covariates lie exactly on a low-dimensional manifold within the support of the source covariates. It is well-known that the optimal rate using only target samples is typically $n_Q^{-2\beta/(2\beta + d)}$ (e.g., see \cite{bickel2007local}). Therefore, incorporating source samples effectively increases the sample size from $n_Q$ to $ n_{\rm eff} = n_P^{(2\beta + d)/(2\beta + D)} + n_Q$, leading to a faster convergence rate, which quantifies the gain from transfer learning. As before, the upper bound here is attained by a local polynomial regression estimator with an appropriately chosen bandwidth.

     \qquad It is worth noting that the same minimax rate was previously established in \cite{pathak2022} (also see \cite{kpotufe2021marginal} for a similar result in the classification context) for the setting where $\rho_n = 0$ and $\beta \le 1$. In this work, we extend their result to a broader regime by considering the case where $\rho_n > 0$ and $\beta > 1$. In particular, we identify the precise threshold on $\rho_n$ below which the same minimax rate continues to hold, thereby characterizing a fundamental phase transition in the estimation problem. Additionally, our proposed estimator is based on local polynomial regression, which differs from the Nadaraya–Watson estimator used in \cite{pathak2022} (or the nearest neighbor estimator used in \cite{kpotufe2021marginal}), and is known to achieve better bias-variance trade-offs when $\beta > 1$. 
     \vspace{.5em}
 \item 
 The optimal choice of bandwidth for the local polynomial regression estimator (which attains the minimax rate of estimation) relies on the knowledge of $\beta$ and $d$, which are typically unknown in practice. By leveraging a popular estimator of intrinsic dimension (see \cite{Farahmand2007}) and the Lepski method (see \cite{lepski1997optimal}) for estimating $\beta$, we propose a data-driven adaptive estimator of $f^\star(\xs)$ when $\rho_n \ll \hns$. Under appropriate additional conditions, we prove in \cref{thm:lepski_manifold} and \cref{cor:fulladapt} that the adaptive estimator attains nearly optimal rates (up to logarithmic factors). 
\end{enumerate}

While we have presented our results in the context of pointwise estimation, they can be easily translated into bounds for the squared integrated risk on the target domain (see \cref{rem:L2rates} for details), under appropriate assumptions. These rates naturally mimic the pointwise error rates mentioned above. Our results unify and extend several strands of the existing literature on transfer learning, adaptive estimation, and manifold-based nonparametric statistics, providing a comprehensive framework for statistically optimal transfer under geometric and smoothness constraints. In \cref{sec:sim}, we conduct extensive numerical experiments that support the aforementioned theoretical findings.

\subsection{Related Literature}
The covariate shift problem was initially studied in \cite{shimodaira2000improving} where the author proved the asymptotic consistency of an importance reweighted maximum likelihood estimator. Sharper finite sample guarantees were obtained in \citep{ma2023,sugiyama2006,Sugiyama2008,mansour2009domain,cortes2010learning, kalavasis2024transfer} when the density ratio between the target and the source is known and satisfies certain growth conditions. We note that in our setting, where $Q$ is supported on a manifold within the support of $P$, the density ratio does not exist, and hence these results are not applicable. In another line of work, see \cite{ben2010theory,cortes2019adaptation,mohri2012new,david2010impossibility,germain2013pac}, the authors study generalization bounds as a function of the discrepancy between the source-target pair. These results hold under fairly
general conditions, but do not necessarily guarantee consistency as $n=n_P+n_Q\to\infty$. Moreover, some divergence-based measures also assume the existence of density ratios, which is not the case in our setting. Least squares estimators have also been analyzed under covariate shift (see \cite{schmidt2024,ma2023}) but under very different smoothness assumptions than what we consider in the current paper. There has also been extensive research on covariate shifts in the context of causal inference (see \cite{hotz2005predicting,chernozhukov2023automatic}), reinforcement learning (see \cite{cai2024transfer,chai2025deep}), adversarial learning (see \cite{liang2024blessings,rezaei2021robust}), high-dimensional regression (see \cite{li2022transfer,tian2023transfer}), etc., which are not directly comparable to the nonparametric estimation problem we address here. We also refer the interested reader to \cite{quinonero2009dataset,panigrahi2021survey,sugiyama2012machine,pan2010survey,guan2021domain} for survey-length treatments on transfer learning. Below, we focus on two papers that do not assume the existence of density ratios and are most relevant to our work.

In \cite{kpotufe2021marginal}, the authors study covariate shift in classification. They propose the notion of transfer exponent to quantify the shift from the source to the target. They also propose a $k$-nearest neighbor based estimator which is shown to be minimax optimal for classification when the Bayes optimal classifier is $\beta$-H\"{o}lder smooth for $\beta\in (0,1]$. A more fine-grained measure quantifying the shift from the source to the target was proposed in \cite{pathak2022}. The authors proposed a Nadaraya-Watson type estimator for nonparametric regression assuming again a $\beta$-H\"{o}lder assumption on the regression function, $\beta\in (0,1]$. Therefore, the estimators in the aforementioned papers are unable to mitigate the curse of dimensionality by exploiting higher orders of regularity in $f^\star$. In contrast, we propose a local polynomial regression-based estimator which can exploit higher order H\"{o}lder smoothness and achieve the minimax optimal rate. Moreover, we allow the target data distribution to be supported ``near" a smooth manifold instead of being exactly on it. This provides a more realistic model for noisy observations from a manifold. In such settings, direct application of the transfer exponents from prior work can fail to yield minimax optimal rates --- even under low smoothness. 

\subsection{Notation}
$\mathcal{P}(\R^D \times \R)$ denotes the set of probability measures supported on some subset of  $\R^D\times \R$, $D\ge 1$. Let $\mathbb{Z}$ denote the set of integers. We write $\lVert \cdot\rVert$ and $\lVert \cdot \rVert_{\mathrm\mathrm{op}}$ for the $L^2$ vector norm and $L^2$ matrix operator norm respectively. Given a probability measure $P$, $\mbox{supp}(P)$ is the smallest closed set whose probability is $1$ under $P$, and $L^2(P):=\{f:\mbox{supp}(P)\to\R,\, \int f^2\,\,d P <\infty\}$. Let $\mathds{1}(\cdot)$ denote the standard indicator function of a set/event. For $x,y\in\R$, we write $x\vee y$ for $\max\{x,y\}$. Also $(e_1,\ldots ,e_n)$ denotes the canonical basis on $\R^n$. Throughout the paper, we will write $C_i$s to denote constants in $(0,\infty)$ which might change from line to line. Given positive sequences $a_n$ and $b_n$, we write $a_n\asymp b_n$ if there exists $0<c,C<\infty$ (free of $n$) such that $c\le a_n/b_n\le C$. Similarly, we write $a_n\lesssim b_n$ if there exists $C>0$ (free of $n$) such that $a_n\le C b_n$. Also, $\lmn(\mathbf{M})$ and $\lmx(\mathbf{M})$ will be used to denote the minimum and maximum eigenvalues of a square matrix $\mathbf{M}$. Finally, $\ceilfloor{x}$ will be used to denote the positive integer closest to $x$.
\section{Problem Setup and Main Results}
\label{Sec:prelim}
In this section, we present the problem setup, the minimax rate of estimation, along with an estimator that achieves the minimax optimal rate. 

\subsection{Modeling assumptions}
\label{sec:modelasn}
In a typical supervised transfer learning setup,  
we observe some labeled data from the source domain and relatively fewer labeled data from the target domain.
\begin{align}\label{eq:model}
&\;\;\textrm{Source sample:} \quad\quad (X_1,Y_1),\ldots ,(X_{n_P},Y_{n_P})\overset{i.i.d.}{\sim} P=P_X\times P_{Y|X} \in \mathcal{P}(\R^D\times \R),\nonumber\\
&\;\;\textrm{Target sample:} \quad\quad (X_1,Y_1),\ldots ,(X_{n_Q},Y_{n_Q})\overset{i.i.d.}{\sim} Q=Q_X\times Q_{Y|X}\in \mathcal{P}(\R^D\times \R).
\end{align}
Here $P_X,P_{Y|X},Q_X,Q_{Y|X}$ are the corresponding marginal and conditional distributions. Set $n:=n_P+n_Q$. We consider (a slightly relaxed version of) the standard covariate shift assumption for transfer learning (see \cite{quinonero2009dataset,sugiyama2012machine,sugiyama2007,shimodaira2000improving}; also see \cite{panigrahi2021survey} for a survey).

\begin{assn}[Covariate shift]\label{asn:covshift}
    The marginal distribution on the source and the target is different (i.e., $P_X \neq Q_X$), but the Bayes optimal predictors (conditional expectation) for the source and the target distributions are identical, i.e., $\bbE_{P}[Y|X]=\bbE_{Q}[Y|X]$ a.s. $P_X$ and $Q_X$. 
    We denote the common mean function by $f^\star(x)$.  
\end{assn}
In this paper, our objective is to estimate $f^\star$ at a point $\xs$, where $\xs$ lies within the support of the target distribution. Unlike much of the existing literature on covariate shift, we do not assume that the source distribution $P_X$ and the target distribution $Q_X$ share the same intrinsic dimension. Instead, we consider a more general setting where the support of $Q_X$ lies approximately on a low-dimensional submanifold embedded within the support of $P_X$. By
``approximately" we mean that the support of the target distribution is close (with respect to Euclidean distance) to such a low-dimensional manifold. We formally define this notion of an approximate manifold below.
\begin{defn}[Approximate manifold]\label{def:appmanif}
Given a continuously differentiable function $\phi:[-1,1]^d\to [-1,1]^D$, a $\rho$-approximate submanifold $\mathcal{M}_{\rho}$ is defined as: 
$$\mathcal{M}_{\rho}:=\{x\in [-1,1]^D: \, \inf_{z\in [-1,1]^d}\,\lVert x-\phi(z)\rVert\le \rho\}.$$
\end{defn}
\noindent
A similar notion of an approximate manifold has recently been used in \cite{jiao2023deep} (also see \cite{meilua2024manifold,lee2006riemannian,bickel2007local}), where the parameter $\rho$ quantifies the deviation of the set $\cM_\rho$ from the exact manifold defined by $\{\phi(z): z \in [-1, 1]^d\}$. In particular, $\rho = 0$ corresponds to the exact manifold case.  
We next present our assumption on the distribution of the source and the target covariates, where we assume that the support of $Q_X$ lies on such an approximate manifold. 
\begin{assn}[Source and target covariate distribution]
\label{asn:lowman}
We assume the following on the distribution of the source and the target covariates: 
\begin{enumerate}
    \item The source covariate distribution $P_X$ is supported on $[-2,2]^D$ and admits a Lebesgue density $p$. Furthermore, there exists a neighborhood $U'$ of $\xs$ such that $p$ is strictly positive and uniformly Lipschitz continuous on $U'$.

    \item We assume that the target covariates are supported on an approximate manifold $\mathcal{M}_{\rho_n}$ for some $\rho_n$ close to $0$. In particular $X \sim Q_X$ satisfies: 
    $$
    X \overset{d}{=} \phi(V)+\rho_n U, \quad 1\le i\le n_Q,
    $$
    where $\phi:[-1,1]^d\to [-1,1]^D$ is continuously differentiable and 
    \begin{equation}
    \label{eq:manifold_curvature}
    c^{-1}\le \lmn(\nabla \phi(z)^{\top}\nabla \phi(z)) \le \lmx(\nabla \phi(z)^{\top}\nabla \phi(z)) \le c, \ \ \forall \ \ \|z\|_\infty \le 1 \,,
    \end{equation}
    for some $c\in (0,\infty)$. $V \sim q$ (supported on $[-1,1]^d$) and $U\sim g$ (supported on $[-1,1]^D$) are independent. 
    The p.d.f.s $q(\cdot)$ and $g(\cdot)$ are bounded from below and Lipschitz continuous.
    \item On both the source and the target domains, the conditional fourth moment map, $x\mapsto \bbE[(Y-f^\star(X))^4|X=x]$ is locally bounded around $\xs$. 
\end{enumerate}
\end{assn}
\noindent
The first part of \cref{asn:lowman} imposes some basic regularity assumptions on the distribution $P_X$. This regularity ensures that $P_X$ puts enough mass around $\xs$ so that the source samples carry meaningful information about $f^\star(\xs)$. As a result of this assumption, a large source sample size $n_P$ will lead to fast rates of convergence when the target sample size $n_Q$ is small. 
\\\\
\noindent
The second part of \cref{asn:lowman} is a natural model for generating data from an approximate manifold (see \cref{def:appmanif}). Note that when $\rho_n=0$, $Q_X$ need not have a density, with respect to the Lebesgue measure. When $\rho_n>0$, the supremum of the density of $Q_X$ diverges to $\infty$ as $\rho_n\to 0$.  
The condition in  \eqref{eq:manifold_curvature} on $\phi$ enforces certain smoothness on the manifold, which aids our analysis. Note that the above condition implies that function $\phi(\cdot)$ is a bi-Lipschitz function, as: 
$$
\|\phi(z) - \phi(z')\|_2 = 
\begin{cases}
    \|\nabla \phi(\tilde z)(z - z')\|_2 & \le \|z - z'\|_2 \sqrt{\lambda_{\max}(\nabla \phi(\tilde z)^{\top}\nabla \phi(\tilde z))} \\
    \|\nabla \phi(\tilde z)(z - z')\|_2 & \ge \|z - z'\|_2 \sqrt{\lambda_{\min}(\nabla \phi(\tilde z)^{\top}\nabla \phi(\tilde z))} 
\end{cases}
$$
The bi-Lipschitz property of $\phi$ implies that the image of the latent space forms a Lipschitz manifold—an assumption commonly adopted in the statistical literature (e.g., see Definition 2 in \cite{kohler2023estimation} or Definition 1 in \cite{schmidt2019deep}). However, those works typically impose this condition only locally, by assuming the existence of a finite collection of maps $\{\phi_j\}_{1 \le j \le K}$ such that $X = \phi_j(Z) + \rho_n U$ for $Z \in \cZ_j \subseteq [-1, 1]^d$ with $\cup_j \cZ_j = [-1, 1]^d$. In contrast, we assume a single global map $\phi$. That said, all of our results and proofs can be readily extended to this more general local setting with minor bookkeeping. Since this generalization does not alter the convergence rates or the underlying intuition, we focus on the global case for clarity of exposition. 
An important distinction from the assumption on the distribution of the source covariates is that the regularity assumptions on $(q, g)$ are global instead of local around $\xs$. This is because, according to the data-generating model, the mass of $Q_X$ around $\xs$ depends on the global behavior of both $q$ and $g$.

 \vspace{0.1in}
 
\noindent The third part of \cref{asn:lowman} imposes an upper bound on the conditional fourth moment of the residual $Y - \bbE[Y \mid X]$ in a neighborhood of $\xs$, in both the source and the target distributions. Such moment conditions are also standard and considerably weaker than assuming sub-Gaussian or sub-exponential tails for the regression errors, making them broadly applicable in practical settings.
\\\\
\noindent
Finally, we introduce some smoothness restrictions on the target function $f^\star$. 
\begin{assn}[Local H\"{o}lder continuity of $f^\star$]
\label{asn:regf}
The conditional mean function $f^\star\in \Sigma(\beta,L)$, $\beta>0$, $L>0$,  the class of locally $\beta$-H\"{o}lder functions, i.e., there exists a neighborhood $U$ around $\xs$ such that    
    \[
\sup_{\mathbf{x} \in U} \max_{|\mathbf{t}|<\lfloor \beta \rfloor} |D^{\mathbf{t}} f^\star(x)| + \sup_{\mathbf{x} \in U} \max_{|\mathbf{t}| = \lfloor \beta \rfloor} \frac{\left| D^\mathbf{t} f^\star(x) - D^\mathbf{t} f^\star(\xs) \right|}{\|x- \xs\|^{\beta - \lfloor \beta \rfloor}} \leq L,
\]  
where \( \mathbf{t} = (t_1, \dots, t_d) \) is a multi-index with \( |\mathbf{t}| = \sum_{i=1}^d t_i \), and \( D^\mathbf{t} f^\star \) denotes the partial derivative of \( f^\star \) of order \( |\mathbf{t}| \) given by \(
D^\mathbf{t} f^\star = \frac{\partial^{|\mathbf{t}|} f^\star}{\partial x_1^{t_1} \cdots \partial x_d^{t_d}}.  
\) 
\end{assn}
\noindent
The first part of the above assumption imposes a H\"{o}lder smoothness condition on $f^\star$, a standard regularity requirement in nonparametric regression (see, e.g., Chapter 1 of \cite{Tsybakov2009}), which helps mitigate the curse of dimensionality on complex domains. 

In the following section, we introduce a local polynomial regression-based estimator for $f^\star(\xs)$, which we later show achieves the minimax optimal rate of convergence under the aforementioned assumptions on the data-generating process. 

\subsection{Local polynomial regression estimator}
Local polynomial regression is a classical nonparametric technique that estimates the regression function at a point via fitting a low-degree polynomial to the data in a neighborhood around that point. 
Asymptotic and non-asymptotic properties of local polynomial regression (LPR) based estimators are well-studied and by now classical; interested readers may consult \cite{Fan1995, Tsybakov2009}. 
To construct a local polynomial regression estimator, we require a kernel function, which is used to reweight the observations by assigning larger weights to points that are closer to the target location $\xs$ and smaller weights to those that are farther away. 
This localization is necessary for capturing the local structure of $f^\star$ around $\xs$. 
Towards that goal, we consider a kernel $K:\R^D\to\R$ which is symmetric around $0$ and supported on $[-1,1]^D$. We also assume that there exists constants $c_K,C_K$ both in $(0,\infty)$ such that 
\begin{align}
\label{eq:kernel}
  c_K\le \inf_{\lVert u\rVert\le 1} K(u)\le \sup_{\lVert u\rVert\le 1} K(u)\le C_K.
\end{align}
Various kernel functions (e.g., the box kernel $K(u)=\mathds{1}(\lVert u\rVert\le 1)/2$) satisfies the above condition. 
Before defining the estimator, let us also introduce the notion of a multivariate polynomial for the ease of the readers. Let $\ell$ denote the largest integer less than or equal to the smoothness index $\beta$ (see \cref{asn:regf}). Given a $k$-tuple $=(u_1,\ldots ,u_k) \in \R^k$, the set of $k$-variate (scaled) polynomials of degree $\le\ell$ is defined as: 
$$
\bz(u):=\left\{\prod_{j=1}^k \frac{u_j^{\alpha_j}}{\alpha_j!}: \alpha_j\in \bbN \cup \{0\}, \sum_{j=1}^k \alpha_j\le \ell\right\}.
$$
Given a small bandwidth parameter $h_n > 0$, the LPR estimate of $f^\star$ at $\xs$ is defined as the first co-ordinate of $\hat \theta$ where: 
$$
\hat \theta = \argmin_{\theta} \sum_{j = 1}^n \left(Y_i - \theta^\top \bz\left(\frac{X_i - \xs}{h_n}\right)\right)^2K\left(\frac{X_i -\xs}{h_n}\right) \,.
$$
Let $\deg(\ell)$ denote the cardinality of $\bz(u)$. If we define a matrix $\bZ \in \reals^{n \times \deg(\ell)}$ such that the $i^{th}$ row of $\bZ$ is $\bz((X_i - \bx)/h_n)$, and a diagonal matrix $\bW \in \reals^{n \times n}$ with $\bW_{ii} = K((X_i - \xs)/h_n)$, then this estimator can be written as: 
\begin{equation}\label{eq:weightvec}
\hat f^{\rm LPR}(\xs) = e_1^\top \left(\bZ^\top \bW \bZ\right)^{-1}(\bZ^\top \bW Y) =:  \sum_{i = 1}^n w_i Y_i \,.
\end{equation}
where $Y = (Y_1, \dots, Y_n)$. 
Local polynomial estimators are widely used due to their ease of implementation, as well as their optimal theoretical properties (e.g., local polynomial regression exhibits less bias near the boundary compared to the kernel regression-based estimator; see \cite{Fan1995} for details). 
The weights $w_i$, $1\le i\le n$, satisfy the reproducing property (see \citet[Proposition 1.12]{Tsybakov2009}), i.e.,
\begin{equation}
\label{eq:reproducing}
    \sum_{i=1}^n w_i=1 \quad \mbox{and} \quad \sum_{i=1}^n w_i\prod_{j=1}^d (X_{i,j}-\xs_{j})^{\alpha_j} = 0, 
\end{equation}
for $\alpha_j\in \mathbb{Z}$, $\alpha_j\ge 0$, and $\sum_{j=1}^d \alpha_j\le \ell$, a fact that we will use in our proofs. 
However, this estimator can be unstable (although with a small probability), especially when the matrix $\bZ^\top \bW \bZ$ is ``near singular". To remove this instability, we define our final estimator as follows: 
\begin{align}
\label{eq:ourestim}
\hat f(\xs) := \begin{cases}
\hat f^{\rm LPR}(\xs) & \mbox{if}\; \lmn\left(\bZ^{\top} \bW \bZ\right)\ge n \tau_n \psi_n \\ 0 & \mbox{otherwise}\,,
\end{cases}
\end{align}
where 
\begin{align}\label{eq:psin}
\psi_n:=\frac{n_P}{n}+\frac{n_Q}{n}(\rho_n\vee h_n)^{d-D}, 
\end{align}
and $\tau_n := (n_P h_n^D+n_Q (\rho_n\vee h_n)^{d-D} h_n^D)^{-p}$ for some $p>1$, is a robustness parameter. The constant $\psi_n$ is a normalizing constant. The intuition behind the truncation lies in Lemmas \ref{lem:lbdeig} and \ref{lem:conc_matrix}, which roughly show that with high probability $(n\psi_n)^{-1}\lmn(\bZ^{\top}\bW\bZ)$ is bounded away from $0$. As $\tau_n\downarrow 0$ by choice, we then have with high probability $\hat{f}=\hat{f}^{\rm LPR}$. The specific choice of $\tau_n$ is more of a technical requirement for the analysis. Similar truncations have also been used in other papers featuring local polynomial regression-based estimators (see \cite{Shen2020}). In our numerical experiments (see \cref{sec:sim}), we find that the estimator works well even without truncation.

\subsection{Upper and Lower bounds}\label{sec:uplow}

\noindent Our first main result proves the rate of convergence of the above local polynomial regression-based estimator. 
\begin{thm}[Upper bound for pointwise rates]
\label{thm:approx_manifold_upper_bound}
    Suppose Assumptions \ref{asn:covshift} --- \ref{asn:regf}, and \eqref{eq:kernel} hold. Then for some constants $0 < C_1,C_2,C_3 <\infty$, we have: 
    $$
	\E(\hat{f}(\xs)-f^\star(\xs))^2 \leq  
     \begin{cases}
         C_3 \left(n_P + n_Q\rho_n^{d - D}\right)^{-\frac{2\beta}{2\beta + D}} & \hspace{1em}\rho_n \ge C_1 \hns \\
        C_3 (n_P^{\frac{2\beta + d}{2\beta + D}} + n_Q)^{-\frac{2\beta}{2\beta + d}}  & \hspace{1em}\rho_n \le C_2 \hns \,,
    \end{cases}
	$$
    where the bandwidth $h_n$ is chosen as follows:
    \begin{equation}
    \label{eq:def_h}
    h_n = 
    \begin{cases}
        C_4(n_P + n_Q\rho_n^{d - D})^{-\frac{1}{2\beta + D}}, & \text{if } \rho_n \ge C_1 \hns\\
        C_4(n_P^{\frac{2\beta + d}{2\beta + D}} + n_Q)^{-\frac{1}{2\beta + d}} & \text{if } \rho_n \le C_2 \hns\,,
    \end{cases}
\end{equation}
for some constant $C_4>0$ and the robustness parameter $\tau_n=(n\psi_n h_n^D)^{-p}$ for some $p>1$.
\end{thm}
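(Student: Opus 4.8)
The plan is to control the standard bias--variance decomposition of the local polynomial regression estimator, working on the high-probability event $\mathcal{E}_n := \{\lmn(\bZ^\top \bW \bZ) \ge n\tau_n\psi_n\}$ on which $\hat f = \hat f^{\rm LPR} = \sum_i w_i Y_i$. First I would write $\hat f(\xs) - f^\star(\xs) = \sum_i w_i(Y_i - f^\star(X_i)) + \sum_i w_i(f^\star(X_i) - f^\star(\xs))$, with the first term being the stochastic part and the second the bias. For the bias, I would invoke the reproducing property \eqref{eq:reproducing}: since the weights annihilate all monomials in $(X_i - \xs)$ of degree $\le \ell = \lfloor\beta\rfloor$, a Taylor expansion of $f^\star$ around $\xs$ together with \cref{asn:regf} gives $|\sum_i w_i(f^\star(X_i) - f^\star(\xs))| \lesssim h_n^\beta \sum_i |w_i|$, since each $X_i$ with nonzero weight lies within $h_n$ of $\xs$. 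So the key deterministic estimate I need is a bound on $\sum_i |w_i|$ (ideally $O(1)$, or at worst slowly growing) and on $\sum_i w_i^2$ (the variance proxy, which should be of order $1/(n\psi_n h_n^D)$ up to the conditional-variance bound from \cref{asn:lowman}(3)).

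The heart of the argument is therefore the design-matrix analysis, which I expect is exactly what Lemmas~\ref{lem:lbdeig} and \ref{lem:conc_matrix} provide: one needs that $n^{-1}\bZ^\top\bW\bZ$ concentrates around its population version and that the latter has smallest eigenvalue $\gtrsim \psi_n$. The quantity $\psi_n = n_P/n + (n_Q/n)(\rho_n \vee h_n)^{d-D}$ is precisely the expected fraction of (reweighted) sample mass in the ball $B(\xs, h_n)$: the source contributes $\asymp n_P h_n^D$ points (ambient $D$-dimensional density bounded below near $\xs$), while the target contributes $\asymp n_Q h_n^d (\rho_n \vee h_n)^{d-D}\cdot h_n^{D-d}$... more carefully, the target mass of $B(\xs,h_n)$ under $Q_X = \phi(V) + \rho_n U$ behaves like $h_n^d \cdot (h_n/\rho_n)^{D-d}$ when $h_n \ll \rho_n$ (tube of radius $\rho_n$ around a $d$-manifold, intersected with a $D$-ball of radius $h_n$) and like $h_n^d$ when $h_n \gtrsim \rho_n$ — this is where the $(\rho_n\vee h_n)^{d-D}$ factor comes from, and it is the one genuinely new geometric computation. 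Given the eigenvalue lower bound, standard arguments (e.g., as in \cite{Tsybakov2009}) yield $\sum_i w_i^2 \lesssim (n\psi_n h_n^D)^{-1}$ and $\sum_i|w_i| \lesssim 1$ on $\mathcal{E}_n$, and a matrix concentration / Bernstein argument gives $\Pr(\mathcal{E}_n^c)$ small — small enough that, combined with the a priori bound $|\hat f(\xs)| \le$ (something polynomial) or simply the truncation to $0$, the contribution of $\mathcal{E}_n^c$ to the MSE is negligible relative to the target rate (this is the role of the polynomial robustness parameter $\tau_n$).

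Assembling the pieces: on $\mathcal{E}_n$ the MSE is $\lesssim h_n^{2\beta} + (n\psi_n h_n^D)^{-1}$. Now substitute. In the regime $\rho_n \ge C_1\hns$ one has $h_n \le \rho_n$ (needs checking from \eqref{eq:def_h}), so $\psi_n \asymp (n_P + n_Q\rho_n^{d-D})/n$ and $n\psi_n h_n^D \asymp (n_P + n_Q\rho_n^{d-D})h_n^D$; balancing $h_n^{2\beta} \asymp ((n_P + n_Q\rho_n^{d-D})h_n^D)^{-1}$ gives exactly the stated $h_n$ and the rate $(n_P + n_Q\rho_n^{d-D})^{-2\beta/(2\beta+D)}$. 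In the regime $\rho_n \le C_2\hns$ one has $h_n \ge \rho_n$, so $\psi_n \asymp (n_P + n_Q h_n^{d-D})/n$ and $n\psi_n h_n^D \asymp n_P h_n^D + n_Q h_n^d$; balancing against $h_n^{2\beta}$ gives $h_n \asymp (n_P^{(2\beta+d)/(2\beta+D)} + n_Q)^{-1/(2\beta+d)}$ and the rate $(n_P^{(2\beta+d)/(2\beta+D)} + n_Q)^{-2\beta/(2\beta+d)}$, after checking that with this choice indeed $h_n \gtrsim \rho_n$ and that the $n_P h_n^D$ and $n_Q h_n^d$ terms combine as claimed. The main obstacle, as noted, is the geometric mass estimate for $Q_X(B(\xs,h_n))$ and the accompanying spectral control of $\bZ^\top\bW\bZ$ uniformly over the two bandwidth regimes — in particular handling the crossover $h_n \sim \rho_n$ cleanly and verifying the self-consistency of the bandwidth choices (that $h_n \le \rho_n$ resp. $h_n \ge \rho_n$ in the respective cases), which is what pins down the threshold $\hns$.
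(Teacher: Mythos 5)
Your proposal captures the right architecture — bias–variance decomposition, reproducing property for the bias, spectral control of the weighted design matrix $\bZ^\top\bW\bZ$, the tube-volume computation giving the factor $(\rho_n\vee h_n)^{d-D}$ in $\psi_n$, and the final bandwidth balancing and self-consistency check around the threshold $\hns$. This matches the paper's route.

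There is, however, one concrete gap in how you organize the conditioning, and it is not cosmetic. You propose to work on
\[
\mathcal{E}_n := \{\lmn(\bZ^\top\bW\bZ)\ge n\tau_n\psi_n\},
\]
i.e., the event on which the truncated estimator is nonzero, and then to claim that on $\mathcal{E}_n$ one has $\sum_i|w_i|\lesssim 1$ and $\sum_i w_i^2\lesssim (n\psi_n h_n^D)^{-1}$. That does not follow from the defining property of $\mathcal{E}_n$: on $\mathcal{E}_n$ alone, all you control is $\|(\hat S_n/\psi_n)^{-1}\|_{\rm op}\le\tau_n^{-1}$, and since $\tau_n=(n\psi_n h_n^D)^{-p}\downarrow 0$, this bound is polynomially diverging, so the weights can in principle be enormous on $\mathcal{E}_n$. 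The deterministic $O(1)$-weight bounds you need hold only on the much stronger concentration event
\[
\Omega_n := \Bigl\{\bigl\lVert \tfrac{\hat S_n}{\psi_n}-S_n\bigr\rVert_{\rm op}\le \tfrac12\lmn(S_n)\Bigr\},
\]
on which $\lmn(\hat S_n/\psi_n)\ge \lmn(S_n)/2\ge m/2$ is bounded away from zero uniformly in $n$ (Lemma~\ref{lem:lbdeig}). The paper therefore splits on $\Omega_n$, not $\mathcal{E}_n$, and the truncation parameter $\tau_n$ enters only in controlling the awkward slice $\Omega_n^c\cap\mathcal{E}_n$, where $\hat f\neq 0$ but may be as large as $\tau_n^{-1}\|\hat s_n/\psi_n\|$; there one combines the crude polynomial bound $\tau_n^{-2}$ with the exponentially small $\P(\Omega_n^c)$ from Lemma~\ref{lem:conc_matrix} and the moment bound on $\|\hat s_n/\psi_n\|$ to make this contribution $o\bigl((n\psi_n h_n^D)^{-1}\bigr)$. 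You do gesture at this role of $\tau_n$, but your statement that the bias and variance bounds hold on $\mathcal{E}_n$ is, as written, incorrect — you need to split on $\Omega_n$ first and treat $\Omega_n^c$ separately, noting that $\Omega_n\subseteq\mathcal{E}_n$ for large $n$ so that the estimator is automatically untruncated on the good event. With that correction the rest of your sketch aligns with the paper's proof.
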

The proof requires a careful bias-variance trade-off argument for a local polynomial regression estimator on non-identically distributed data (due to covariate shift). We defer the reader to \cref{sec:pfmainres} for details. 

\begin{rem}[New rates for noisy manifold regression]
If we do not have any source samples, i.e., if we set $n_P = 0$ in the above theorem, then we get the following rate of estimation for the \emph{target only} estimator: 
\begin{align*}
    \E(\hat{f}(\xs)-f^\star(\xs))^2 \lesssim 
    \begin{cases}
         \left(n_Q\rho_n^{d - D}\right)^{-\frac{2\beta}{2\beta + D}} & \hspace{1em}\rho_n \gtrsim n_Q^{-\frac{1}{2\beta+d}} \\
       n_Q^{-\frac{2\beta}{2\beta + d}}  & \hspace{1em}\rho_n \lesssim \ n_Q^{-\frac{1}{2\beta + d}} \,.
    \end{cases}
\end{align*}
This convergence rate is itself a notable result in the context of nonparametric regression with covariates observed noisily from a manifold, and, to the best of our knowledge, reveals a novel phase-transition phenomenon occurring at the noise level $\rho_n \asymp n_Q^{-1/(2\beta + d)}$ which has not been addressed before. 
To elaborate briefly, when $\rho_n$ is below the threshold, we recover the same convergence rate as in the noiseless case ($\rho_n = 0$), where the covariates lie exactly on a manifold, and the rate matches that of \cite{bickel2007local}. In contrast, when $\rho_n$ exceeds the threshold, we start seeing its effect in the rate, and in particular, $\rho_n \asymp 1$, we obtain the standard $D$-dimensional minimax rate for estimating a $\beta$-smooth function in the ambient space. 
\end{rem}
\begin{rem}[Benefit of transfer learning]
The above rate reveals some interesting insights into the interplay between intrinsic geometry, noise level, and function smoothness, and also showcases the benefit of transfer. First, let us consider the case when $n_P > 0$ to understand the effect of source samples on the rate of estimation of $f^*(\xs)$ as demonstrated in Theorem \ref{thm:approx_manifold_upper_bound}. Set $n_{\rm eff}:=n_P^{(2\beta+d)/(2\beta+D)}+n_Q$. 
When $\rho_n\ll \hns$, i.e., the target covariates lie very closely to a low-dimensional manifold, the rate of convergence $(n_{\rm eff})^{-2\beta/(2\beta+d)}$ is faster than the rate $n_Q^{-2\beta/(2\beta + d)}$ of the estimator that only uses the target samples (as $n_{\rm eff} > n_Q$). 
One the other hand, if $\rho_n\gg \hns$ and $d<D$, then the rate $(n_P+n_Q\rho_n^{d-D})^{-2\beta/(2\beta + D)}$ is faster than then the standard nonparametric rate  $(n_P+n_Q)^{-2\beta/(2\beta + D)}$ (for the pooled estimator which treats $P_X$ and $Q_X$ as probability measures on $\R^D$) and also faster than the target only rate, which is $n_Q^{-2\beta/(2\beta + D)}$. 
\end{rem}
\begin{rem}[From pointwise to global estimation]
\label{rem:L2rates}
While we have only focused on pointwise rates above, it is straightforward to derive global rates (on the target domain) from our analysis. In particular, if we put a global covariate shift and $\beta$-H\"{o}lder assumption on $f^\star$, in Assumptions \ref{asn:covshift} and \ref{asn:regf}, along with global regularity and moment conditions in \cref{asn:lowman}, it would follow that 
$$\E\lVert \hat{f}-f^\star\rVert^2_{L^2(Q_X)} \leq \begin{cases}
         C_3 \left(n_P + n_Q\rho_n^{d - D}\right)^{-\frac{2\beta}{2\beta + D}} & \hspace{1em}\rho_n \ge C_1 \hns \\
        C_3 (n_P^{\frac{2\beta + d}{2\beta + D}} + n_Q)^{-\frac{2\beta}{2\beta + d}}  & \hspace{1em}\rho_n \le C_2 \hns \,,
    \end{cases}$$
with the same bandwidth selection as in \cref{thm:approx_manifold_upper_bound}. We note that the second part of the above global rate matches \cite[Corollary 1]{pathak2022} where it was derived only when $\beta\in (0,1]$ and $\rho_n=0$. \cref{thm:approx_manifold_upper_bound} generalizes their result in a variety of ways. Firstly, the above rates exploit general $\beta>0$-H\"{o}lder smoothness of $f^\star$. In fact, the Nadaraya-Watson type estimator proposed in \cite{pathak2022} cannot lead to faster rates for higher order regularity of $f^\star$. On the other hand, our local polynomial regression-based estimator is able to exploit such regularity and hence leads to the optimal reduction in the curse of dimensionality for highly smooth functions. Secondly, it shows that even when the target is not exactly on a manifold but close enough to it, the rate of convergence is precisely the same as the exact manifold setting. Finally, \cref{thm:approx_manifold_upper_bound} reveals a new regime of convergence when $\rho_n\gg \hns$ that will turn out to be minimax optimal (see \cref{thm:minmaxlb} below). A similar comment also applies to \cite[Theorem 1]{kpotufe2021marginal}, which albeit in a classification context, proposes a $k$-nearest neighbor estimator which only exploits lower order regularity of $f^\star$ (in their case, $f^\star$ is the Bayes optimal classifier) and does not recover the benefits of transfer if the target data is close to a manifold instead of lying exactly on it.
\end{rem}
In the next theorem, we establish the minimax lower bound for estimating $f^*(\xs)$, which establishes that the rate obtained in Theorem \ref{thm:approx_manifold_upper_bound} is optimal: 
\begin{thm}
\label{thm:minmaxlb}
    Suppose Assumptions \ref{asn:covshift} --- \ref{asn:regf}, and \eqref{eq:kernel} hold. Then for some constants $0<C_1,C_2,C_3<\infty$, we have: 
    $$\inf_{\hat{f}} \sup_{f\in \Sigma(\beta,L)} \E(\hat{f}(\xs)-f(\xs))^2 \geq  
     \begin{cases}
         C_3 \left(n_P + n_Q\rho_n^{d - D}\right)^{-\frac{2\beta}{2\beta + D}} & \hspace{1em}\rho_n \ge C_1 \hns \\
        C_3 (n_P^{\frac{2\beta + d}{2\beta + D}} + n_Q)^{-\frac{2\beta}{2\beta + d}}  & \hspace{1em}\rho_n \le C_2 \hns \,,
    \end{cases}$$
    where the infimum outside is taken over all estimators $\hat{f}$ computed using the source and the target data drawn according to \eqref{eq:model}.
\end{thm}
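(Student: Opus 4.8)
The plan is to establish the two cases separately by reducing the estimation problem to a testing problem via a constrained version of Le~Cam's two-point method (or, where needed, Assouad's lemma / Fano's method with several perturbations). The key object in both regimes is a "bump" perturbation of the regression function supported in a small ball around $\xs$: fix a smooth, compactly supported function $g_0$ with $g_0(0)>0$ and $\|g_0\|_{\Sigma(\beta,L')}\le 1$, and for a bandwidth $h$ to be chosen, set $f_h(x) := L' h^\beta g_0((x-\xs)/h)$, so that $f_h \in \Sigma(\beta,L)$ for small enough $L'$, with the separation $|f_h(\xs) - 0| \asymp h^\beta$. We compare the two hypotheses $f\equiv 0$ versus $f = f_h$ (both paired with the \emph{same} covariate marginals $P_X,Q_X$ from Assumption~\ref{asn:lowman} and, say, Gaussian $Y\mid X$), so that the only distributional difference is through the conditional mean on the support of the bump. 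The standard recipe then yields a lower bound of order $h^{2\beta}$ provided the joint KL divergence between the $n_P+n_Q$ source+target product measures stays bounded, i.e. $n_P\,\mathrm{KL}_P + n_Q\,\mathrm{KL}_Q \lesssim 1$, where $\mathrm{KL}_P \asymp \int f_h^2 \,dP_X$ and similarly for $Q$.

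The crux is computing these two divergence budgets in terms of $h$ and matching $h$ to the rate. For the source: since $P_X$ has a density bounded away from $0$ and $\infty$ near $\xs$ (Assumption~\ref{asn:lowman}(1)), $\int f_h^2\,dP_X \asymp h^{2\beta}\cdot h^D$, contributing $n_P h^{2\beta+D}$. For the target: the ball $\{\|x-\xs\|\le h\}$ intersects the tubular neighborhood $\mathcal{M}_{\rho_n}$, and by the data-generating model $X = \phi(V) + \rho_n U$ with $\phi$ bi-Lipschitz and $q,g$ bounded above/below, the $Q_X$-mass of that ball scales like $h^d \cdot (\rho_n \vee h)^{D-d}$ — the $h^d$ coming from the $d$-dimensional latent direction and $(\rho_n\vee h)^{D-d}$ from the transverse noise direction, which saturates at $h^{D-d}$ once $h\gtrsim\rho_n$. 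Hence $\int f_h^2\,dQ_X \asymp h^{2\beta}\cdot h^d(\rho_n\vee h)^{D-d}$, contributing $n_Q h^{2\beta}h^d(\rho_n\vee h)^{D-d} = n_Q h^{2\beta+D}(\rho_n\vee h)^{d-D}$. Setting the total budget $\asymp 1$ gives $h^{2\beta+D}\big(n_P + n_Q(\rho_n\vee h)^{d-D}\big)\asymp 1$; one then checks that in the regime $\rho_n\gtrsim\hns$ the solution has $h\lesssim\rho_n$, so $(\rho_n\vee h)=\rho_n$ and $h^{2\beta}\asymp(n_P+n_Q\rho_n^{d-D})^{-2\beta/(2\beta+D)}$, while in the regime $\rho_n\lesssim\hns$ the solution has $h\gtrsim\rho_n$, so $(\rho_n\vee h)=h$, the equation becomes $n_P h^{2\beta+D} + n_Q h^{2\beta+d}\asymp 1$, and $h^{2\beta}\asymp n_{\rm eff}^{-2\beta/(2\beta+d)} = (n_P^{(2\beta+d)/(2\beta+D)}+n_Q)^{-2\beta/(2\beta+d)}$ — matching the two cases of the upper bound. (The self-consistency of "which of $\rho_n,h$ is larger" against the regime hypothesis is exactly where the threshold $\hns$ in \eqref{eq:kappa_n} is forced, and is worth writing out carefully.)

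The main obstacle I anticipate is the precise two-sided estimate of the $Q_X$-mass of small Euclidean balls intersecting the noisy tube $\mathcal{M}_{\rho_n}$, uniformly as $\rho_n\to 0$ and $h\to 0$ with the two of comparable order. This requires a change-of-variables argument: decompose $x-\phi(V)$ into components tangent and normal to the manifold using that $\nabla\phi^\top\nabla\phi$ has eigenvalues in $[c^{-1},c]$ (so the tangential part contributes a Jacobian bounded above and below, giving the $h^d$ factor from integrating $q$), and separately integrate the normal/noise component $\rho_n U$ against $g$ over the set of $u$'s for which $\phi(V)+\rho_n u$ lands in the ball, which has Lebesgue measure $\asymp (h/\rho_n)^{D-d}\wedge 1$ times $\rho_n^{-(D-d)}$-type normalization, yielding the $(\rho_n\vee h)^{D-d}$ factor. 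Some care is needed near the boundary $\|V\|_\infty=1$ and where the curvature of $\phi$ matters, but since we only need the bound in a fixed small neighborhood of $\xs$ and $\phi$ is $C^1$ with the uniform eigenvalue control, a first-order Taylor expansion of $\phi$ suffices. Everything else — verifying $f_h\in\Sigma(\beta,L)$, the KL computation for Gaussian shifts $\mathrm{KL}\asymp\|f_h\|_{L^2}^2$, and assembling Le~Cam's inequality — is routine. If one wants the sharp constant-free statement for the $\rho_n\le C_2\hns$ case one may need to use Assouad over $\asymp (1/h)^d$ disjoint bumps packed along the manifold direction rather than a single two-point comparison; I would do that variant to get the clean $h^{2\beta+d}$ balance, since packing multiple bumps is what genuinely exploits the $d$-dimensional intrinsic structure.
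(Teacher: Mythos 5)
Your proposal takes the same route as the paper: a two-point Le Cam argument with a single $\beta$-smooth bump $f_h$ of height $\asymp h^\beta$ supported on a ball of radius $h$ centered at $\xs$, the KL decomposition $n_P\KL_P + n_Q\KL_Q$ with $\KL_P \asymp \int f_h^2\,dP_X$ and $\KL_Q\asymp\int f_h^2\,dQ_X$, and the choice of $h$ that keeps the total KL bounded by a small constant.

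There is, however, a genuine error in your stated $Q_X$-ball-mass formula. You write $Q_X(B(\xs,h))\asymp h^d(\rho_n\vee h)^{D-d}$, but that is the Lebesgue volume of $B(\xs,h)\cap\mathcal{M}_{\rho_n}$ (and even then only in the regime $h\ge\rho_n$; when $h\le\rho_n$ the intersection volume is $h^D$, so the correct expression would use $\wedge$, not $\vee$). The quantity you actually need is that volume times the $Q_X$-density $\asymp\rho_n^{d-D}$, which gives $Q_X(B(\xs,h))\asymp h^D(\rho_n\vee h)^{d-D}$: this is $h^D\rho_n^{d-D}$ when $h\le\rho_n$ and $h^d$ when $h\ge\rho_n$, consistent with Lemma~\ref{lem:techlem1}(iv)--(v). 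Your claimed identity $h^d(\rho_n\vee h)^{D-d}=h^D(\rho_n\vee h)^{d-D}$ is false in general (the two differ by $(\rho_n\vee h)^{2(D-d)}/h^{D-d}$); you land on the correct final budget $n_Q h^{2\beta+D}(\rho_n\vee h)^{d-D}$ only because that algebra slip happens to repair the earlier slip. With the corrected mass formula the rest of your argument, including the self-consistency check that determines which of $\rho_n,h$ dominates and forces the threshold $\hns$, goes through exactly as in the paper.

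Finally, your hedge that the $\rho_n\ll\hns$ case might require Assouad with a packing of bumps is unnecessary. The two-point argument already yields the $h^{2\beta+d}$ balance directly, because when $\rho_n\le h$ one has $\KL_Q\asymp h^{2\beta}\cdot h^d = h^{2\beta+d}$ from the single bump; packing multiple bumps is not needed to ``exploit the $d$-dimensional structure''---that is already captured by the fact that $Q_X$ concentrates $\asymp h^d$ mass in a ball of radius $h$.
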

The proof of the theorem relies on careful construction of the alternatives and is deferred to \cref{sec:pfmainres}. 
An interesting feature of the local polynomial estimator is that the minimax optimal rates are attainable using the same bandwidth $h_n$ across both the source and the target samples. As a result, to construct the estimator, the learner does not need to know which samples belong to the source or to the target. The learner also does not need information about $L>0$, the bound on the local H\"{o}lder norm of $f^\star$ (see \cref{asn:regf}). In particular, when $\rho_n\ll \hns$, it suffices to have access to the pooled data, the source and the target sample sizes, the smoothness $\beta$, and the intrinsic dimension $d$. 

Thus far, the construction of our minimax rate-optimal local polynomial regression estimator assumes prior knowledge of $(d, \beta)$, as these parameters drive the optimal bandwidth choice. However, in practice, $(d, \beta)$ are typically unknown. In the next subsection, we introduce a fully data-driven estimator that achieves the same convergence rate, up to a logarithmic factor, as the oracle estimator with known $(d, \beta)$.

\subsection{Adaptive estimation}
\label{sec:adapt}
In this section, we present an algorithm for constructing a data-driven adaptive estimator of $f^*(\xs)$, which is free from knowledge of the model parameters $(d, \beta)$. 
Throughout this section, we restrict ourselves to the setting $\rho_n\ll \hns$. Our proposed estimator, as presented in the previous section, relies on i) the smoothness parameter $\beta$ of the underlying conditional mean function $f^\star$, and ii) $d$, the intrinsic dimension of the support of $Q_X$. 
However, in practice, neither of these parameters are known in advance. 
Therefore, it is imperative to construct an adaptive estimator that is independent of knowledge of these parameters. 
In this section, we propose an adaptive estimator that estimates $(d, \beta)$ using the data. Our entire procedure is summarized in \cref{alg:adaptive-bandwidth}. It involves two key steps --- estimating the intrinsic dimension $d$ and the H\"{o}lder smoothness index $\beta$ --- which we elaborate on below.

\paragraph{Intrinsic Dimension Estimation.}
We estimate the intrinsic dimension $d$ of the target domain using the $k$-nearest neighbor method proposed in \citet{Farahmand2007} for estimating the dimension of a manifold. The key idea is simple: in a $d$-dimensional space, the number of points within a ball of radius $r$ grows like $r^d$. So the ratio of distances needed to contain $k$ versus $k/2$ points approximates $2^{1/d}$. Taking logarithms makes this relationship linear, allowing us to directly solve for $d$ based on the distance ratio. 
Accordingly, for each target point $X_i$, we define the local estimate
\begin{align}\label{eq:locdim}
\hat{d}(X_i) = \ceilfloor{\frac{\log 2}{\log\left( \frac{r^{(k)}(X_i)}{r^{(\lceil k/2 \rceil)}(X_i)} \right)}},
\end{align}
where $r^{(k)}(X_i)$ is the Euclidean distance from $X_i$ to its $k$-th nearest neighbor. 
The final dimension estimate $\hat{d}^*$ is obtained by aggregating the local estimates $\hat{d}(X_i)$ using either of the following:
\begin{itemize}
  \item \textbf{Average:} $\hat{d}_{\mathrm{avg}} := \ceilfloor{\frac{1}{n} \sum_{i=1}^n \left( \hat{d}(X_i) \wedge D \right)}$,
  \item \textbf{Majority vote:} $\hat{d}_{\mathrm{vote}} = \arg\max_{d'} \sum_{i=1}^n \mathds{1}(\hat{d}(X_i) = d')$.
\end{itemize}
Here $\ceilfloor{x}$ is the nearest integer of $x$. 
In Theorem 1 and Corollary 2 of \cite{Farahmand2007}, they authors proved that both the averaging estimator $\hat{d}_{\rm avg}$ and the majority vote estimator $\hat{d}_{\rm vote}$ exactly equal $d$ with probability converging to $1$ exponentially fast. Importantly, the rate of convergence depends only on the intrinsic dimension $d$, not on the ambient dimension $D$.



\paragraph{Adaptive Smoothness Selection.}
To construct an adaptive local polynomial estimator, we also need to select the smoothness level $\beta$ in a data-driven way. The challenge is that if the bandwidth is too large, we oversmooth (resulting in high bias); if it's too small, we overfit (resulting in high variance). Lepski's method \citep{lepskii1991problem, lepski1997optimal} addresses this trade-off by comparing estimates across different smoothing levels and selecting the largest one that remains consistent. This allows the procedure to reduce variance while avoiding the risk of oversmoothing. 
Although widely studied, we briefly outline the methodology below for the convenience of the readers. Assume that we know $\beta_{\min}, \beta_{\max}>0$ such that the true $\beta_*$ lies in $[\beta_{\rm min}, \beta_{\max}]$. For now, let us also assume that the intrinsic dimension $d$ is known. Define a discrete candidate set $\widetilde{\cB}$ as: 
\[
\widetilde{\mathcal{B}} := \left\{ \beta_{\min} = \tilde{\beta}_1 < \tilde{\beta}_2 < \cdots < \tilde{\beta}_N = \beta_{\max}\right\}, \quad \text{with } \tilde{\beta}_{j} - \tilde{\beta}_{j-1} \asymp \frac{1}{\log n}, \quad n = n_Q + n_P.
\]
For each \(\tilde{\beta} \in \widetilde{\mathcal{B}}\), define the bandwidth as
\begin{equation*}
h_{n,\tilde{\beta}} := C_h \left(\frac{n_Q + n_P^{\frac{2\tilde{\beta} + d}{2\tilde{\beta} + D}}}{\log{n}}\right)^{-\frac{1}{2\tilde{\beta} + d}}
\end{equation*}
The data-driven adaptive estimator is given by
\[
\hat{f}_{\mathrm{adp}}(x^*) := \hat{f}_{\hat{h}_n}(x^*),
\]
where $\hat h_n = h_{n, \hat \beta}$, with
\[
\hat{\beta} := \max \left\{ \tilde{\beta} \in \widetilde{\mathcal{B}} : \left| \hat{f}_{h_{n,\tilde{\beta}}}(x^*) - \hat{f}_{h_{n,\eta}}(x^*) \right| \leq C_\ell h_{n,\eta}^{\eta} \quad \text{for all } \eta \leq \tilde{\beta},\ \eta \in \widetilde{\mathcal{B}} \right\} \,.
\]
for some $C_\ell>0$. 
The following theorem guarantees that the resulting estimator matches the rate of Theorem \ref{thm:approx_manifold_upper_bound} up to a log factor:
\begin{thm}
\label{thm:lepski_manifold}
Suppose $\rho_n\ll \hns$, $Y_i$s are uniformly bounded and Assumptions~\ref{asn:covshift}--\ref{asn:regf}, \eqref{eq:kernel} hold. Let \(\widetilde{\mathcal{B}}\) be a discrete candidate set for the unknown smoothness parameter \(\beta\). Then, provided $C_\ell>0$ is chosen large enough, there exists some constant \(0 < C < \infty\), for which the following holds:
\[
\mathbb{E} \left[ \frac{|\hat{f}_{\mathrm{adp}}(x^*) - f^\star(x^*)|^2}{\psi_n(\beta)} \right] \leq C,
\quad \text{where } 
\psi_n(\beta) = \left( \frac{n_Q + n_P^{\frac{2\beta + d}{2\beta + D}}}{\log n} \right)^{-2\beta/(2\beta + d)}.
\]
\end{thm}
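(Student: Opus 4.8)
The plan is to run Lepski's method along the grid $\widetilde{\mathcal B}$ in the usual way, the only non-routine ingredient being that every pointwise bias/variance estimate must be produced \emph{uniformly} over the candidate bandwidths $\{h_{n,\tilde\beta}:\tilde\beta\in\widetilde{\mathcal B}\}$, in the covariate-shift, approximate-manifold design. Write $\beta\in[\beta_{\min},\beta_{\max}]$ for the true smoothness (Assumption~\ref{asn:regf}), let $\ell:=\lfloor\beta_{\max}\rfloor$ be the common polynomial degree used by every $\hat f_{h_{n,\tilde\beta}}$ (so $\ell\ge\lfloor\beta\rfloor$), and set $\bar\beta:=\max\{\tilde\beta\in\widetilde{\mathcal B}:\tilde\beta\le\beta\}$, so that $\beta-\bar\beta\lesssim 1/\log n$. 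Decompose $\hat f_{h_{n,\tilde\beta}}(\xs)=f^\star(\xs)+b_n(\tilde\beta)+s_n(\tilde\beta)$, where $b_n(\tilde\beta)$ is the conditional bias and $s_n(\tilde\beta)=\sum_i w_i\big(Y_i-f^\star(X_i)\big)$ (with $w_i$ the weights of \eqref{eq:weightvec} at bandwidth $h_{n,\tilde\beta}$) is mean-zero given the covariates. Two elementary facts will be used repeatedly: (a) $\psi_n(\cdot)$ is monotone decreasing and $h_{n,\tilde\beta}^{2\tilde\beta}\asymp\psi_n(\tilde\beta)$, so $h_{n,\tilde\beta}^{\tilde\beta}\lesssim h_{n,\eta}^{\eta}$ whenever $\eta\le\tilde\beta$ in the grid; and (b) since the grid spacing is $\asymp1/\log n$ and every exponent defining $\psi_n$ moves by $O(1/\log n)$, $\psi_n(\bar\beta)\asymp\psi_n(\beta)$ and $h_{n,\bar\beta}^{\bar\beta}\asymp\psi_n(\beta)^{1/2}$. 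Throughout I also use that every candidate scale dominates $\rho_n$, i.e.\ $\rho_n\lesssim h_{n,\tilde\beta}$ for all $\tilde\beta\in\widetilde{\mathcal B}$, so that each scale sits in the ``small-$\rho_n$'' regime of \cref{thm:approx_manifold_upper_bound}, with effective local sample size $\asymp n_Ph_{n,\tilde\beta}^D+n_Qh_{n,\tilde\beta}^d$; this is the one point where the hypothesis $\rho_n\ll\hns$ (together with a mild restriction preventing $\beta_{\min}$ from being taken so small that $h_{n,\beta_{\min}}\ll\rho_n$) enters the adaptive argument.

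First I would build a global good event $\mathcal A=\mathcal A_1\cap\mathcal A_2$. Let $\mathcal A_1$ be the event that, simultaneously for all $\tilde\beta\in\widetilde{\mathcal B}$: the design at scale $h_{n,\tilde\beta}$ is regular, $\lmn(\bZ^\top\bW\bZ)\ge n\tau_n\psi_n$ (so $\hat f_{h_{n,\tilde\beta}}=\hat f^{\rm LPR}_{h_{n,\tilde\beta}}$ and the reproducing property \eqref{eq:reproducing} applies); the bias satisfies $|b_n(\tilde\beta)|\lesssim h_{n,\tilde\beta}^{\beta}$ (Taylor expansion to order $\ell\ge\lfloor\beta\rfloor$, \eqref{eq:reproducing}, and $\sum_i|w_i|\lesssim1$); and the weights satisfy $\sum_i w_i^2\lesssim(n_Ph_{n,\tilde\beta}^D+n_Qh_{n,\tilde\beta}^d)^{-1}\lesssim h_{n,\tilde\beta}^{2\tilde\beta}/\log n$, the last inequality being exactly where the $\log n$ deliberately built into $h_{n,\tilde\beta}$ is used. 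All of these are the statements of \cref{thm:approx_manifold_upper_bound} and of Lemmas~\ref{lem:lbdeig} and~\ref{lem:conc_matrix} read off at a generic bandwidth, so a union bound over the $|\widetilde{\mathcal B}|\asymp\log n$ scales gives $\mathbb P(\mathcal A_1^c)\le n^{-c'}$. On $\mathcal A_1$, conditionally on the covariates $s_n(\tilde\beta)$ is a sum of bounded (by hypothesis), mean-zero terms with $\sum_i w_i^2\lesssim h_{n,\tilde\beta}^{2\tilde\beta}/\log n$, so Bernstein's inequality gives $\mathbb P\big(|s_n(\tilde\beta)|>t\,h_{n,\tilde\beta}^{\tilde\beta}\,\big|\,X_1,\dots,X_n\big)\le2\exp(-c\,t^2\log n)$; taking $t$ proportional to $C_\ell$ and large, and unioning over the grid, I set $\mathcal A_2:=\{|s_n(\tilde\beta)|\le\tfrac14C_\ell h_{n,\tilde\beta}^{\tilde\beta}\ \text{for all }\tilde\beta\in\widetilde{\mathcal B}\}$ and obtain $\mathbb P(\mathcal A^c)\le n^{-c'}$ with $c'$ as large as desired upon enlarging $C_\ell$.

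Next I carry out the deterministic Lepski bookkeeping on $\mathcal A$. For any $\eta\le\bar\beta$ in the grid, $\eta\le\beta$ and $h_{n,\eta}\le1$ give $|b_n(\eta)|\lesssim h_{n,\eta}^{\beta}\le h_{n,\eta}^{\eta}$, hence $|\hat f_{h_{n,\eta}}(\xs)-f^\star(\xs)|\le|b_n(\eta)|+|s_n(\eta)|\lesssim h_{n,\eta}^{\eta}$; in particular $|\hat f_{h_{n,\bar\beta}}(\xs)-f^\star(\xs)|\lesssim h_{n,\bar\beta}^{\bar\beta}$. Therefore, for every $\eta\le\bar\beta$ in the grid,
\begin{align*}
\big|\hat f_{h_{n,\bar\beta}}(\xs)-\hat f_{h_{n,\eta}}(\xs)\big|
&\le\big|\hat f_{h_{n,\bar\beta}}(\xs)-f^\star(\xs)\big|+\big|f^\star(\xs)-\hat f_{h_{n,\eta}}(\xs)\big|\\
&\lesssim h_{n,\bar\beta}^{\bar\beta}+h_{n,\eta}^{\eta}\ \lesssim\ h_{n,\eta}^{\eta},
\end{align*}
using fact (a); once $C_\ell$ is large enough this is $\le C_\ell h_{n,\eta}^{\eta}$, so $\bar\beta$ satisfies the Lepski criterion and $\hat\beta\ge\bar\beta$. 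Since $\hat\beta\ge\bar\beta$, the inequality defining $\hat\beta$ evaluated at $\eta=\bar\beta$ yields $|\hat f_{h_{n,\hat\beta}}(\xs)-\hat f_{h_{n,\bar\beta}}(\xs)|\le C_\ell h_{n,\bar\beta}^{\bar\beta}$, and hence, by the triangle inequality and fact (b),
\begin{align*}
\big|\hat f_{\mathrm{adp}}(\xs)-f^\star(\xs)\big|
&\le\big|\hat f_{h_{n,\hat\beta}}(\xs)-\hat f_{h_{n,\bar\beta}}(\xs)\big|+\big|\hat f_{h_{n,\bar\beta}}(\xs)-f^\star(\xs)\big|\\
&\lesssim h_{n,\bar\beta}^{\bar\beta}\ \asymp\ \psi_n(\beta)^{1/2}.
\end{align*}
Note that no separate upper bound on $\hat\beta$ is needed: the comparison to the fixed anchor $\hat f_{h_{n,\bar\beta}}$ absorbs it automatically. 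Consequently $|\hat f_{\mathrm{adp}}(\xs)-f^\star(\xs)|^2\lesssim\psi_n(\beta)$ on $\mathcal A$.

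Finally, on $\mathcal A^c$ the estimator is still deterministically bounded: by the truncation in \eqref{eq:ourestim} it is either $0$ or, when $\lmn(\bZ^\top\bW\bZ)\ge n\tau_n\psi_n$, has $|w_i|\le\|(\bZ^\top\bW\bZ)^{-1}\|_{\mathrm{op}}\,\|\bz((X_i-\xs)/h_n)\|\,K((X_i-\xs)/h_n)\lesssim(n\tau_n\psi_n)^{-1}$, so $|\hat f_{\mathrm{adp}}(\xs)|\le\|Y\|_\infty\sum_i|w_i|\le\mathrm{poly}(n)$ using $\tau_n=(n\psi_nh_n^D)^{-p}$ and the boundedness of $Y$; since $f^\star$ is bounded near $\xs$, this gives $\bbE\big[|\hat f_{\mathrm{adp}}(\xs)-f^\star(\xs)|^2\,\mathds{1}_{\mathcal A^c}\big]\le\mathrm{poly}(n)\,\mathbb P(\mathcal A^c)\le\mathrm{poly}(n)\,n^{-c'}\lesssim\psi_n(\beta)$ once $c'$ (equivalently $C_\ell$) is large enough, because $\psi_n(\beta)\gtrsim n^{-1}$. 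Adding the two contributions yields $\bbE[|\hat f_{\mathrm{adp}}(\xs)-f^\star(\xs)|^2]\lesssim\psi_n(\beta)$, which is the claim. I expect the main obstacle to be the uniform, generic-scale control underlying $\mathcal A_1$: one must re-run the bias/variance/conditioning analysis of \cref{thm:approx_manifold_upper_bound} and Lemmas~\ref{lem:lbdeig}--\ref{lem:conc_matrix} with $h_n$ replaced by each $h_{n,\tilde\beta}$, verify that the $\log n$-inflation of the bandwidths makes the Bernstein tails summable against the union bound over the $\asymp\log n$ scales with enough slack to also kill $\mathrm{poly}(n)\,\mathbb P(\mathcal A^c)$, and --- the delicate geometric point --- confirm that under $\rho_n\ll\hns$ every candidate scale satisfies $h_{n,\tilde\beta}\gtrsim\rho_n$, so that all scales are governed by the small-$\rho_n$ geometry. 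The remaining Lepski algebra is routine once these uniform estimates and facts (a)--(b) are in place.
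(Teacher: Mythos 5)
Your proof is correct and reaches the stated second–moment bound, but it packages the Lepski argument differently from the paper. The paper fixes the oracle index \(i^\star\) with \(\beta=\tilde\beta_{i^\star}\), writes \(\mathbb E[|\hat f_{\mathrm{adp}}-f^\star|/\delta_{n,i^\star}]=\sum_j\mathbb E[\cdots\mathds 1_{\cE_j}]\) over the selection events \(\cE_j=\{\hat\beta=\tilde\beta_j\}\), handles the overshoot \(j\ge i^\star\) by the triangle inequality against the oracle estimator \(\hat f_{i^\star}\), and handles the undershoot \(j<i^\star\) by bounding \(\P(\cE_j)\lesssim n^{-C_0}\) (via Lemma~\ref{lem:stochastic_conc}) and then doing explicit tail integrals for the \(\Upsilon_1,\Upsilon_2\) terms. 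You instead build a single high–probability event \(\mathcal A\) on which every one of the \(\asymp\log n\) candidate estimators simultaneously has controlled conditional bias and a small stochastic part (conditional Bernstein plus a union bound over the grid, made summable by the \(\log n\) inflation in the bandwidths — exactly the role of Lemma~\ref{lem:rate_discrepancy_Lepski}), derive the Lepski inclusion \(\hat\beta\ge\bar\beta\) deterministically on \(\mathcal A\), and dispose of \(\mathcal A^c\) with the truncation–induced \(\mathrm{poly}(n)\) bound on \(|\hat f|\). This buys a cleaner, more transparent bookkeeping (no case split on \(j\lessgtr i^\star\), no tail integrals), at the cost of having to verify the design–regularity and variance bounds uniformly over the grid; but this uniform work is morally the same as what the paper does scale by scale inside the \(\cE_j\) summation. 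Two further points in your favor: (i) you correctly flag that the Lepski machinery needs every candidate scale to stay in the small–\(\rho_n\) regime, i.e.\ \(\rho_n\lesssim h_{n,\beta_{\min}}\) (a slightly stronger requirement than \(\rho_n\ll\kappa_n^\star\) at the unknown true \(\beta\)) — the paper's proof relies on this implicitly whenever it identifies \(n\psi_n h_n^D\) with \(n_P h_n^D+n_Q h_n^d\); and (ii) your argument targets the squared error \(\mathbb E[|\hat f_{\mathrm{adp}}-f^\star|^2/\psi_n(\beta)]\) directly, as stated in the theorem, whereas the display \eqref{eq:lepski_break_1} in the paper's proof controls the first moment \(\mathbb E[|\hat f_{\mathrm{adp}}-f^\star|/\delta_{n,i^\star}]\) — a minor mismatch in the paper, easily repaired, but worth noting.
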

It follows immediately from the above result and Theorem \ref{thm:minmaxlb} that the rate of the $\beta$-adaptive estimator is minimax optimal up to a logarithmic factor. We expect this log factor to be unavoidable, as it is well known that adaptation over smoothness typically incurs an additional logarithmic term in the minimax rate (e.g., see Theorem 2 of \cite{lepskii1991problem}).
For an extension to the unknown $d$ setting, we assume access to an estimator $\hat{d}$ which satisfies the following property: 
\begin{align}\label{eq:goodestim}
    \P(\hat{d}=d)\to 1, \quad \mbox{as}\,\, n_Q\to\infty.
\end{align}
Natural choices for $\hat{d}$ include $\hat{d}_{\rm avg}$ and $\hat{d}_{\rm vote}$ described above. By \citet[Corollary 2]{Farahmand2007}, \eqref{eq:goodestim} holds for $\hat{d}_{\rm avg}$ and $\hat{d}_{\rm vote}$ (under appropriate conditions), and the convergence to $1$ is, in fact, exponentially fast. Therefore, even for moderately large $n_Q$, these estimators of the intrinsic dimension $d$ perform really well in practice. The following corollary is an immediate consequence of \cref{thm:lepski_manifold} and it shows that the adaptive estimator from \cref{alg:adaptive-bandwidth} converges at a nearly optimal rate. 

\begin{cor}\label{cor:fulladapt}
    Consider the same setting as in \cref{thm:lepski_manifold} and suppose that the learner has access to an estimator $\hat{d}$ satisfying \eqref{eq:goodestim}. Let $\hat{h}_n$ denote the bandwidth obtained from the output of \cref{alg:adaptive-bandwidth}. Then the estimator $\hat{f}_{\hat{h}_n}$ satisfies 
    $$|\hat{f}_{\hat{h}_n}(\xs)-f^\star(\xs)|=O_p\left(\left(\frac{n_Q+n_P^{\frac{2\beta+d}{2\beta+D}}}{\log{n}}\right)^{-\beta/(2\beta+d)}\right),$$
    provided $n_Q, n_P\to\infty$. 
\end{cor}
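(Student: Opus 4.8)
The plan is to deduce the corollary from \cref{thm:lepski_manifold} by a routine ``good event'' argument, taking advantage of the fact that only convergence in probability (not an in-expectation bound) is asserted. Let $A_n := \{\hat d = d\}$ denote the event that the intrinsic-dimension estimator is exactly correct; by \eqref{eq:goodestim} we have $\P(A_n) \to 1$ as $n_Q \to \infty$ (and, by \citet[Corollary 2]{Farahmand2007}, $\P(A_n^c)$ in fact decays exponentially for the choices $\hat d_{\rm avg}$, $\hat d_{\rm vote}$). The key observation is that the dimension estimate enters \cref{alg:adaptive-bandwidth} only through the candidate bandwidths $h_{n,\tilde\beta}$, which depend on the dimension parameter in a fixed deterministic way; consequently, on $A_n$ the entire downstream computation --- the bandwidth grid, the Lepski selector $\hat\beta$, the final bandwidth $\hat h_n$, and the estimator $\hat f_{\hat h_n}(\xs)$ --- coincides \emph{pathwise} with the known-$d$ adaptive estimator $\hat f_{\rm adp}(\xs)$ analyzed in \cref{thm:lepski_manifold}. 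Thus $\hat f_{\hat h_n}(\xs)\,\mathds{1}(A_n) = \hat f_{\rm adp}(\xs)\,\mathds{1}(A_n)$.

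Write $r_n := \psi_n(\beta)^{1/2} = \big((n_Q + n_P^{(2\beta+d)/(2\beta+D)})/\log n\big)^{-\beta/(2\beta+d)}$, which is precisely the claimed rate. \cref{thm:lepski_manifold} gives $\mathbb{E}\big[|\hat f_{\rm adp}(\xs) - f^\star(\xs)|^2\big] \le C\, r_n^2$ with $C$ free of $n$, so Markov's inequality yields $\P\big(|\hat f_{\rm adp}(\xs) - f^\star(\xs)| > M r_n\big) \le C/M^2$ for every $M > 0$ and every $n$. Now fix $\varepsilon > 0$, pick $M$ with $C/M^2 < \varepsilon/2$, and take $n$ large enough that $\P(A_n^c) < \varepsilon/2$. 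Decomposing over $A_n$ and $A_n^c$ and using the pathwise identity on $A_n$,
\[
\P\big(|\hat f_{\hat h_n}(\xs) - f^\star(\xs)| > M r_n\big) \le \P(A_n^c) + \P\big(|\hat f_{\rm adp}(\xs) - f^\star(\xs)| > M r_n\big) \le \varepsilon/2 + C/M^2 < \varepsilon ,
\]
for all sufficiently large $n$. Since $\varepsilon$ was arbitrary, this is exactly $|\hat f_{\hat h_n}(\xs) - f^\star(\xs)| = O_p(r_n)$, which is the assertion of the corollary.

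The only point requiring genuine care --- and the closest thing to an obstacle --- is verifying the pathwise coincidence on $A_n$: one must confirm that $\hat d$ is used in \cref{alg:adaptive-bandwidth} solely through the bandwidth formula, and that the Lepski comparison grid $\widetilde{\mathcal B}$ and the tuning constants $C_h, C_\ell$ are specified without reference to $\hat d$, so that plugging in $\hat d = d$ reproduces the known-$d$ procedure verbatim; given the construction in \cref{sec:adapt} this is immediate, and in particular no sample splitting or independence between $\hat d$ and the regression step is needed, only measurability. Finally, note that because we only claim an $O_p$ bound we never have to control $\hat f_{\hat h_n}$ on the bad event $A_n^c$; were an in-expectation version desired, one would instead invoke the uniform boundedness of the $Y_i$ together with the truncation built into \eqref{eq:ourestim} to obtain a crude deterministic-order bound on $A_n^c$, whose contribution is negligible relative to $r_n^2$ since $\P(A_n^c)$ decays exponentially.
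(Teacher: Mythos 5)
Your proposal is correct and fills in exactly the standard argument the authors have in mind; the paper itself offers no separate proof, calling the corollary ``an immediate consequence'' of \cref{thm:lepski_manifold}. The good-event decomposition on $A_n = \{\hat d = d\}$, the pathwise identity $\hat f_{\hat h_n}(\xs)\mathds{1}(A_n) = \hat f_{\rm adp}(\xs)\mathds{1}(A_n)$ (justified because $\hat d$ enters \cref{alg:adaptive-bandwidth} only through the bandwidth formula), Markov's inequality applied to the $L^2$ bound of \cref{thm:lepski_manifold}, and $\P(A_n^c)\to 0$ from \eqref{eq:goodestim} together give the $O_p$ claim, and no independence or sample-splitting is needed since only measurability of the event $A_n$ is used.
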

\paragraph{Algorithm.} The complete adaptive procedure, which integrates both dimension estimation and smoothness selection, is summarized in Algorithm~\ref{alg:adaptive-bandwidth}. If either \(d\) or \(\beta\) is known in advance, the algorithm can be simplified accordingly: the known quantity can be directly substituted, and only the remaining unknown needs to be estimated.

\begin{algorithm}[H]
\caption{Adaptive Bandwidth Selection under Covariate Shift}
\label{alg:adaptive-bandwidth}

\begin{algorithmic}[1]
\REQUIRE Source data \(\{(X_i,Y_i)\}_{i=1}^{n_P}\); target data \(\{(X_i, Y_i)\}_{i=n_P+1}^{n_P + n_Q}\), source data \(\{X_j\}_{j=1}^{n_P}\); ambient dimension \(D\); predefined constants \(C_h, C_\ell\); candidate set \(\widetilde{\mathcal{B}}\); neighborhood size \(k\)

\ENSURE Estimated intrinsic dimension \(\hat{d}\), smoothness \(\hat{\beta}\), and bandwidth \(\hat{h}\)

\STATE Estimate local dimensions \(\hat{d}(X_i)\) using \(k\)-nearest neighbors as in \eqref{eq:locdim}
\STATE Compute global estimate: \(\hat{d} \equiv \hat{d}_{\rm avg} = \ceilfloor{\frac{1}{n_Q} \sum_{i=n_P+1}^{n_P + n_Q} \left( \hat{d}(X_i) \wedge D \right)}\), or use majority vote, see \cref{sec:adapt}

\STATE For each \(\tilde{\beta} \in \widetilde{\mathcal{B}}\), compute \(\hat{f}_{h_{n, \tilde{\beta}, \hat{d}}}(x^*)\), where
\[
h_{n, \tilde{\beta}, \hat{d}} = C_h \left(\frac{n_Q + n_P^{\frac{2\tilde{\beta} + \hat{d}}{2\tilde{\beta} + D}}}{\log{n}}\right)^{-\frac{1}{2\tilde{\beta} + \hat{d}}}
\]

\STATE Apply Lepski’s method: define
\[
\hat{\beta} = \max \left\{ \tilde{\beta} \in \widetilde{\mathcal{B}} : \left| \hat{f}_{h_{n, \tilde{\beta}, \hat{d}}}(x^*) - \hat{f}_{h_{n, \eta, \hat{d}}}(x^*) \right| \leq C_\ell h_{n, \eta, \hat{d}}^\eta,\ \forall \eta \leq \tilde{\beta} \right\} 
\]

\STATE Compute final bandwidth:
\[
\hat{h}_n = C_h \left(\frac{n_Q + n_P^{\frac{2\hat{\beta} + \hat{d}}{2\hat{\beta} + D}}}{\log{n}}\right)^{-\frac{1}{2\hat{\beta} + \hat{d}}}
\]
\STATE Return the proposed adaptive estimator:
\[
\hat{f}_{\mathrm{adp}}(x^*) := \hat{f}_{\hat{h}_n}(x^*) \quad \text{as in } \eqref{eq:ourestim}
\]

\end{algorithmic}
\end{algorithm}

\section{Numerical experiments}
\label{sec:sim}
In this section, we present numerical experiments to evaluate the performance of our proposed method under various covariate shift scenarios.
We focus on comparing the performances between two types of estimators: the \emph{target only estimator}, which is trained only using target domain data, and our \emph{proposed estimator}, which is trained using both source and target domain data. 
Our simulation results are organized into three subsections. Section~\ref{sec:same-dim} investigates the setting where the source and target domains lie in the same ambient space, though their supports may not overlap. Section~\ref{sec:manifold} focuses on the case where the target domain lies on a submanifold inside the source domain, assuming that both the manifold dimension and the smoothness of the conditional mean function are known. Finally, in Section~\ref{sec:adaptive}, we assess the performance of the adaptive estimator (introduced in Section~\ref{sec:adapt}) under a similar setup to Section~\ref{sec:manifold}, but where neither the intrinsic dimension of the target domain nor the smoothness of the conditional mean function is assumed to be known.

\subsection{When Source and Target Have the Same Dimension}
\label{sec:same-dim}
In this section, we consider the setting where the support of the source and target domains has the same ambient dimension, $\reals^D$, with $D = 5$ in our simulation. The data is generated as follows: 
\begin{enumerate}
    \item The source covariates $X_1, \dots, X_{n_P}$ are generated independently from $\mathrm{Unif}([0,1]^5)$. The target covariates $X_{n_P + 1}, \dots, X_{n_P + n_Q}$ are generated independently from $\mathrm{Unif}([-0.5, 0.5]^5)$. 

    \item The responses are generated as $Y_i = f^\star(X_i) + \eps_i$ for $1 \le i \le n$, where $\eps_1, \dots, \eps_n \sim \cN(0, 1)$ and the conditional mean function $f^\star(x)$ is defined as:
\begin{equation}
\label{eq:def_f_sim}
f^\star(x) = \sum_{i=1}^{D} |x_i - x^*_i|^\beta, \quad \beta = 2.5,
\end{equation}
for some fixed point $\xs$ (to be specified later). 
\end{enumerate}
Note that, by definition of $f^\star$, it is $\beta$-H\"{o}lder smooth with $\beta=2.5$. For the test point $\xs$, we consider two choices: 
\begin{enumerate}
    \item \textbf{Interior test point}: $x^*_{\rm Int} = (0.2288,\ 0.2788,\ 0.2409,\ 0.2883,\ 0.2940)$, which is located inside the support of both the source and other target domains. 

    \item \textbf{Target-only test point}:  $x^*_{\rm to} = (0.7288,\ 0.7788,\ 0.7409,\ 0.7883,\ 0.7940)$, which lies outside the support of the source domain. 
\end{enumerate}
Note that by definition of \( f^\star \), the regression function is \( \beta \)-H\"{o}lder smooth with \( \beta = 2.5 \). 
For each test point \( x^* \), we define the weighted density at \( x^* \):
\[
\phi(x^*) := \frac{n_P}{n} p(x^*) + \frac{n_Q}{n} q(x^*),
\]
where \( p(x^*) \) and \( q(x^*) \) are the source and target densities at \( x^* \), respectively. The bandwidths for our proposed and target only estimators are then:
\[
h_{\rm pr} = \left( n \phi(x^*) \right)^{-\frac{1}{2 \beta + d}}, \quad
h_T = \left( n_Q q(x^*) \right)^{-\frac{1}{2 \beta + d}}.
\]

Our simulation setup is as follows. We vary the source sample size $n_P \in \{100, 1000, 5000, 10000\}$. For each fixed $n_P$, we consider a range of target sample sizes given by
$$
n_Q \in \{100, 500, 1000, 2000, 5000, 10000, 20000, 50000, 100000\} \,.
$$
Figure~\ref{fig:samedim} presents the mean squared error (MSE) of the two estimators at $x^*_{\rm Int}$ (resp. at $x^*_{\rm to}$), averaged over 100 Monte Carlo iterations. 
As evident from Figure~\ref{fig:samedim} (a), our proposed estimator achieves significantly lower MSE compared to the target only estimator, especially when the target sample size is small. As $n_Q$ increases, the performance gap between the two estimators narrows, as expected, since a sufficiently large target sample mitigates the need for information transfer from the source domain. However, at the point $x^*_{\rm to}$ (see \cref{fig:samedim} (b)), the source sample does not provide any useful information, since $x^*_{\rm to}$ lies outside the support of the source domain, and consequently, the performance of both estimators coincides. 
This observation indicates that our proposed method is not susceptible to negative transfer. 
In Figure \ref{fig:samedim} (c) and Figure \ref{fig:samedim} (d), we present box plots of our proposed estimator evaluated at $x^*_{\rm Int}$ and $x^*_{\rm to}$, respectively, with the target sample size fixed at $n_Q = 1000$ and the source sample size varied across different values. 
For $\xs_{\rm Int}$, our proposed estimator consistently achieves lower median MSE and reduced variance, with performance improving as source sample size grows, demonstrating effective use of source information. For $\xs_{\rm to}$, our proposed estimator performs on par with the target only estimator, confirming the absence of negative transfer.

\begin{figure}[H]
    \centering
    \begin{minipage}{0.48\textwidth}
        \centering
        \includegraphics[width=\linewidth]{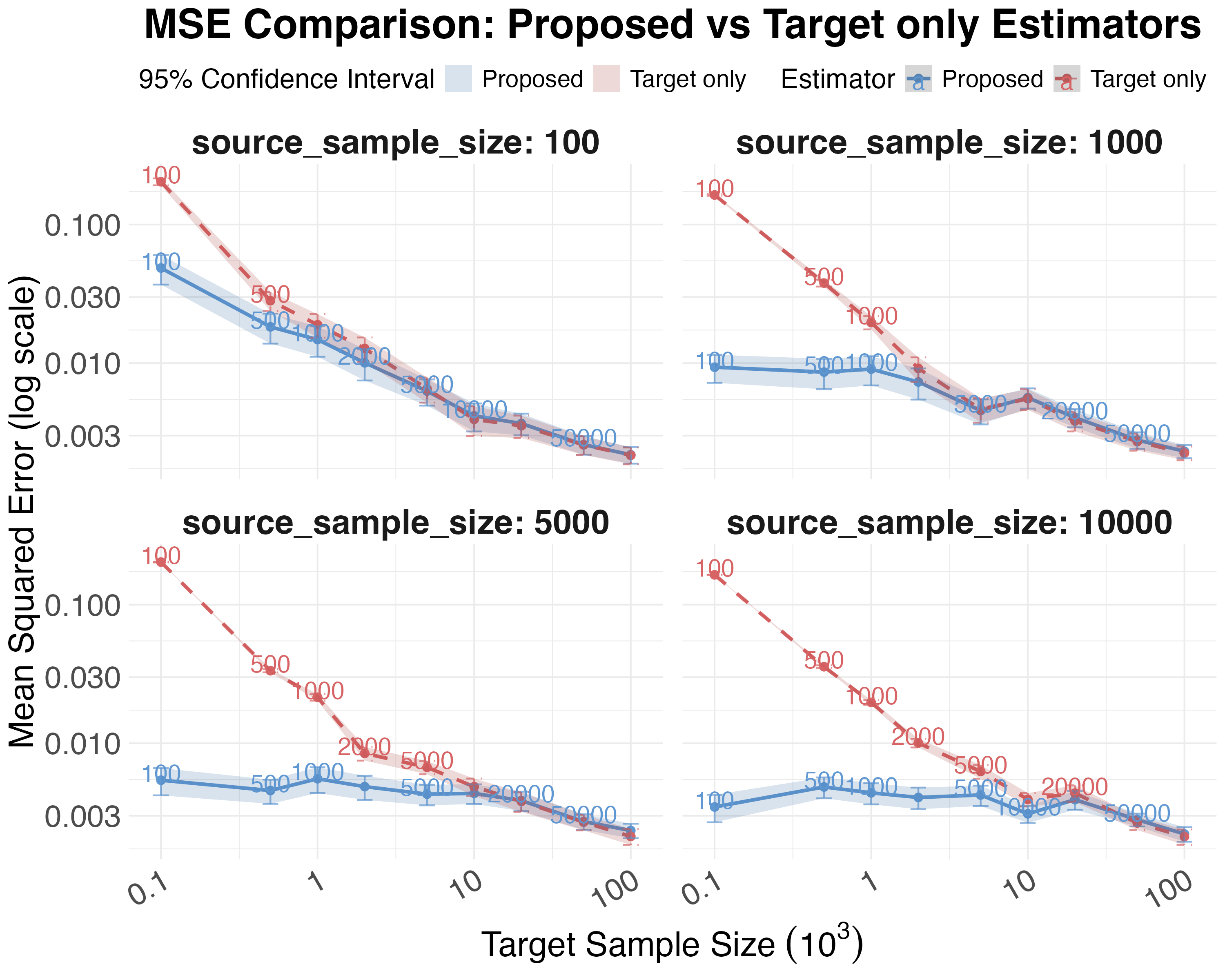}
        \subcaption{MSE (interior)}
    \end{minipage}
    \hfill
    \begin{minipage}{0.48\textwidth}
        \centering
        \includegraphics[width=\linewidth]{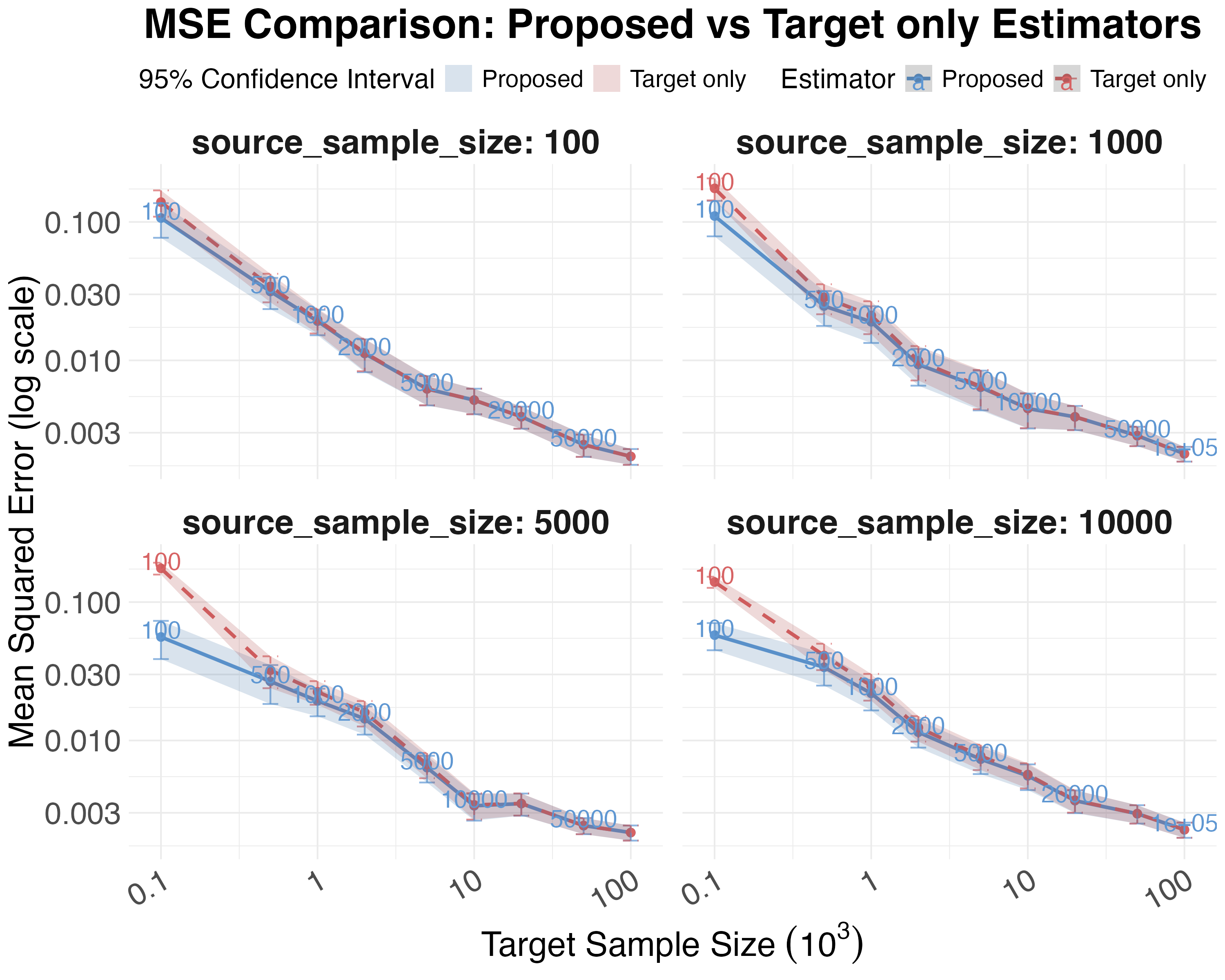}
        \subcaption{MSE (exterior)}
    \end{minipage}
    \vspace{0.8em}
    \begin{minipage}{0.48\textwidth}
        \centering
        \includegraphics[width=\linewidth]{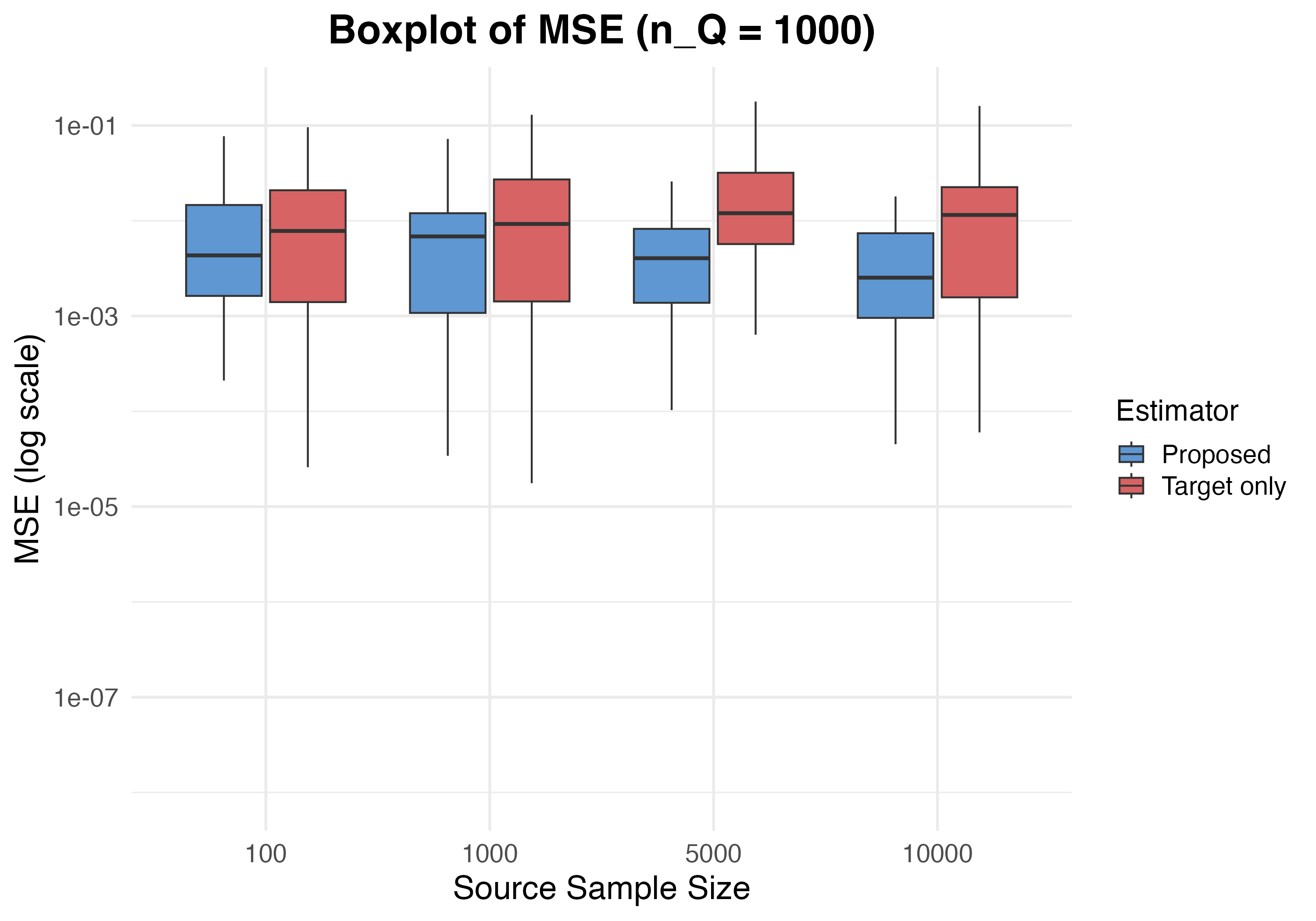}
        \subcaption{Boxplot (interior)}
    \end{minipage}
    \hfill
    \begin{minipage}{0.48\textwidth}
        \centering
        \includegraphics[width=\linewidth]{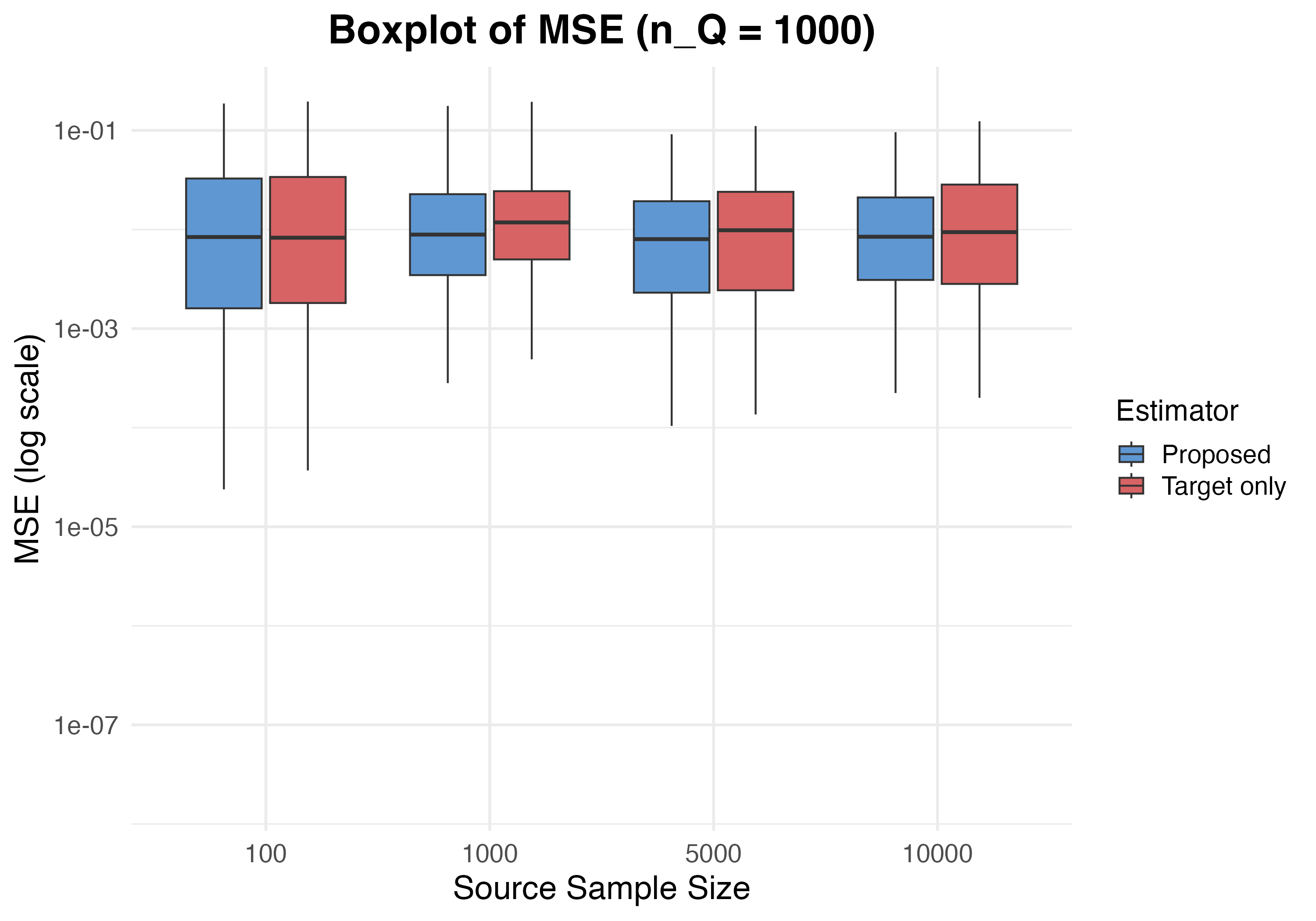}
        \subcaption{Boxplot (exterior)}
    \end{minipage}
    \caption{
        Comparison of \textit{proposed} and \textit{target only} estimators at two representative evaluation points. Top row: MSE curves across varying $n_Q$ for $\xs_{\rm Int}$ and $\xs_{\rm to}$, respectively. Bottom row: MSE boxplots at a fixed $n_Q = 1000$. 
    }
    \label{fig:samedim}
\end{figure}

\subsection{Known Smoothness and Intrinsic Dimension}
\label{sec:manifold}
In this section, we consider the setting when the target covariates lie on a $d$-dimensional manifold embedded in the space $\mathbb{R}^D$, where $d < D$. In the simulation, we take $D = 5$ and $d = 2$. The data generation process is as follows:

\begin{enumerate}
\item The source covariates $X_1, \dots, X_{n_P}$ are independently drawn from $\operatorname{Unif}([0,1]^5)$. The target covariates $X_{n_P + 1}, \dots, X_{n_P + n_Q}$ are generated by first sampling $(Z_{1, i}, Z_{2, i}) \sim \operatorname{Unif}([-1,1]^2)$ and then applying a nonlinear embedding $\phi: [-1, 1]^2 \rightarrow [0,1]^5$ defined as:
\[
\phi(z_1, z_2) = \left( \tfrac{z_1 + 1}{2},\ \tfrac{z_2 + 1}{2},\ z_1^2,\ z_2^2,\ \tfrac{(z_1 + 1)(z_2 + 1)}{4} \right).
\]

\item The response variable is generated using the same regression model as in the previous subsection (see \eqref{eq:def_f_sim}), where $f^\star$ is a $\beta$-H\"{o}lder smooth function with $\beta = 2.5$.

\end{enumerate}

To evaluate the performance of the estimator on manifold points, we uniformly sample a representative point $\xs = (-0.4248,\ 0.5766)$ from the interval $(-1,1)^2$, and map $\phi$ to get the target point in the embedding space:

$$
x^* = \phi(z^*) = (0.2876,\ 0.7883,\ 0.1805,\ 0.3325,\ 0.2267).
$$
For each target sample size \( n_Q \) and fixed source sample size $n_P$.
Same as before, we vary the source sample size $n_P \in \{100, 1000, 5000, 10000\}$. For each fixed $n_P$, we consider a range of target sample sizes given by
$$
n_Q \in \{100, 500, 1000, 2000, 5000, 10000, 20000, 50000, 100000\} \,.
$$ we choose the bandwidth for our proposed estimator and the target only estimator as follows:
\[
h_{\rm pr} = \left( n_Q + n_P^{\frac{2 \beta + d}{2 \beta + D}} \right)^{-\frac{1}{2 \beta + d}}, \quad
h_T = n_Q^{-\frac{1}{2 \beta + d}}.
\]

\begin{figure}[h]
    \centering
    \begin{minipage}{0.48\textwidth}
        \centering
        \includegraphics[width=\linewidth]{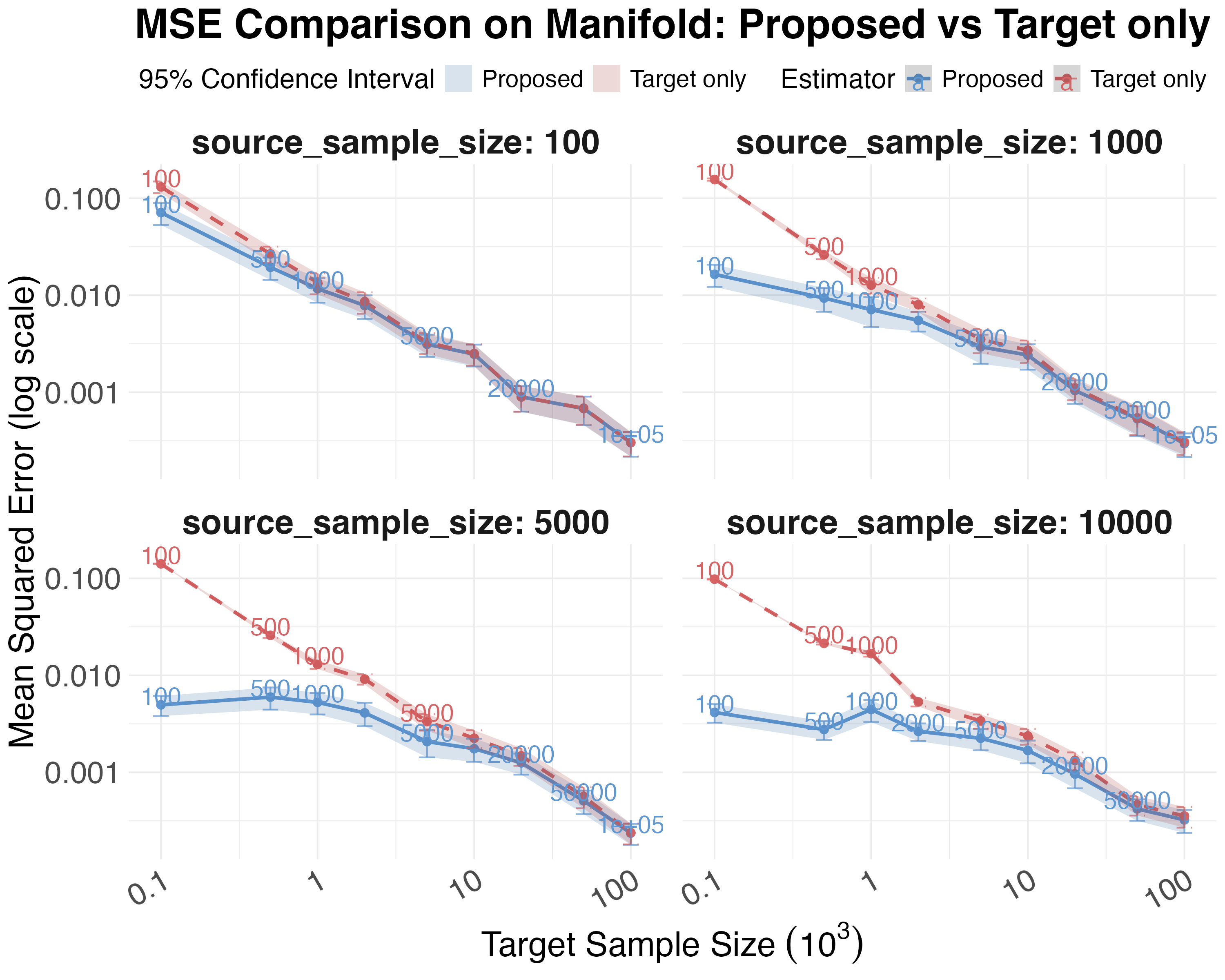}
        \subcaption{MSE versus $n_Q$}
    \end{minipage}
    \hfill
    \begin{minipage}{0.48\textwidth}
        \centering
        \includegraphics[width=\linewidth]{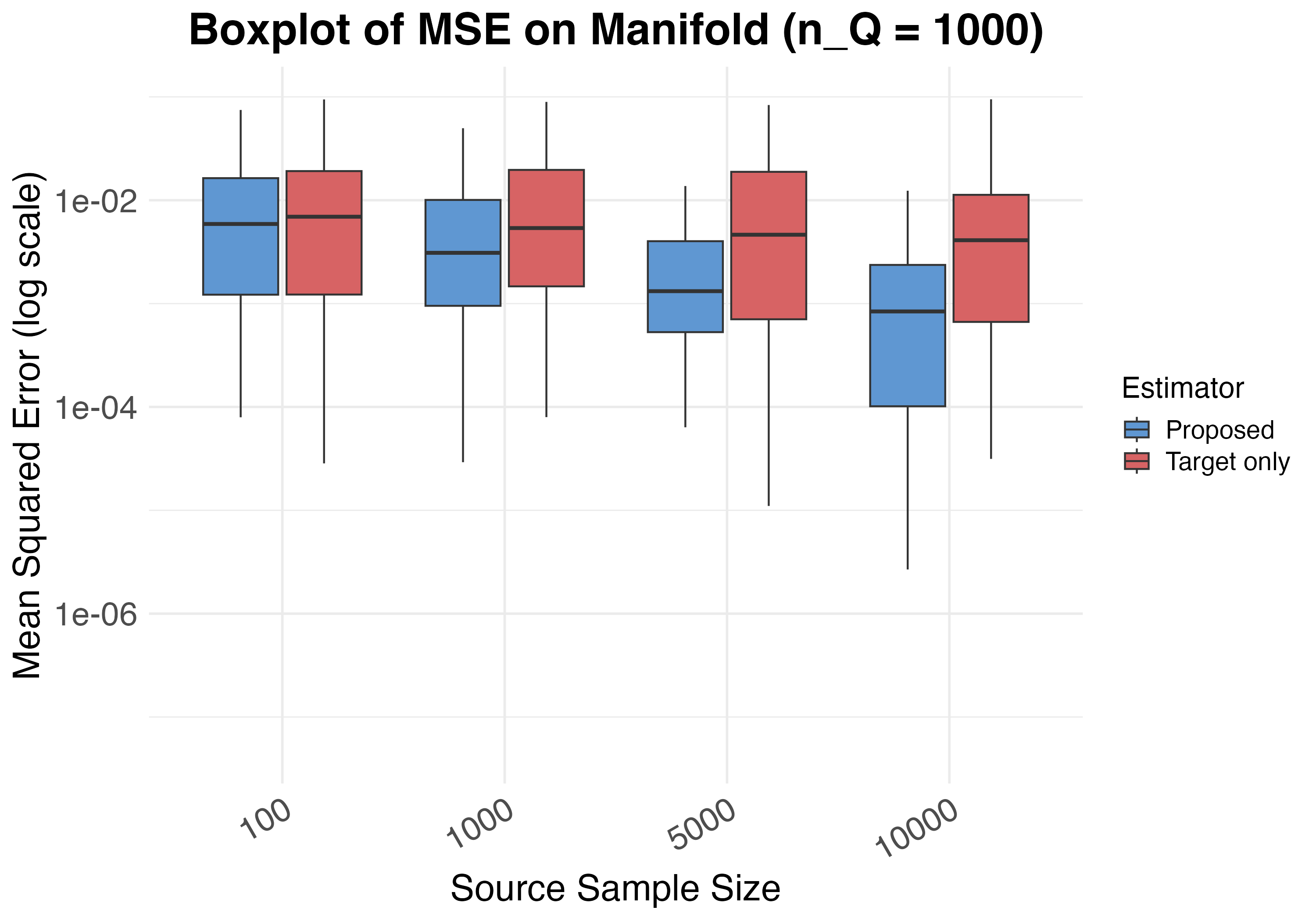}
        \subcaption{Boxplot at $n_Q = 1000$}
    \end{minipage}
    \caption{
        Performance comparison of \textit{Proposed} and \textit{Target-only} estimators in the manifold setting. Left: MSE as a function of target sample size $n_Q$; Right: Error distribution at a fixed $n_Q = 1000$.
    }
    \label{fig:manifold}
\end{figure}

Figure~\ref{fig:manifold} (a) shows the trend of the MSE at our target point of the two estimators with $n_Q$ in the manifold setting. Figure~\ref{fig:manifold} (a) presents the MSE of the two estimators at $x^*$ as the target sample size $n_Q$ increases. The results show that the \textit{proposed estimator} consistently yields lower error and reduced variance, particularly when $n_Q$ is small. This suggests that, even when the source data lies in a higher-dimensional space, it remains beneficial for target estimation. As $n_Q$ increases, the performance gap between the two estimators decreases, as expected. Figure~\ref{fig:manifold} (b) illustrates the MSE distribution for fixed $n_Q = 1000$, showing that the \textit{proposed estimator} achieves both a lower median error and reduced variability.

\subsection{Adaptive Estimation of Intrinsic Dimension and Smoothness}
\label{sec:adaptive}
In this section, we consider the data-driven adaptive estimator, as proposed in Section \ref{sec:adapt}, where we estimate $d$ via a $k$-nearest neighbor method and select $\beta$ using a Lepski-type procedure (Algorithm \ref{alg:adaptive-bandwidth}). 
In our simulations, we use both the averaging estimator $\hat d_{\rm avg}$ and the majority vote estimator $\hat{d}_{\rm vote}$ and present them as integer-valued estimates; in all runs, both equal 2, consistent with the true value.
The experimental setup is described below, and each configuration is evaluated using 100 Monte Carlo repetitions:
\begin{enumerate}
  \item The data generating procedure and the point of interest $\xs$ is identical to that in Section \ref{sec:manifold}. 
  
  \item We estimate $\hat{d}$ using the $k = 10$ nearest neighbors. To construct the set $\widetilde{\mathcal{B}}$, we take $\beta_{\min} = 1$ and $\beta_{\max} = 5$, with successive $\beta$ values spaced by $1/\log(n_P + n_Q)$.
  
 \item We fix $C_h = 1.5$, $C_\ell = 0.5$ in Algorithm \ref{alg:adaptive-bandwidth} to estimate $f^\star(x^*)$. 
\end{enumerate}

To assess the effectiveness of the proposed adaptive method, we compare MSE of three estimators: (1) \textit{Adaptive (Pooled LPR)}: uses our adaptive procedure Algorithm~\ref{alg:adaptive-bandwidth} to estimate \(\hat{\beta}\) and \(\hat{d}\), (2) \textit{Oracle (Pooled LPR)}: uses the true $\beta$ and $d$, with estimation based on both source and target data (same as used in Section \ref{sec:manifold}); (3) \textit{Oracle (Target only)}: also uses the true parameters, but relies solely on target data, serving as a no-transfer benchmark.
Our goal of this section is twofold: (i) to assess numerically whether the adaptive estimator achieve similar MSE to that of the oracle estimator (constructed with known $(d, \beta)$), and (ii) to quantify the benefit of transfer learning that uses source data in low-target regimes. 
We fix the source sample size at $n_P = 5000$ and vary the target sample size 
$$
n_Q \in \{1000,\ 2000,\ 3000,\ 4000,\ 5000,\ 10000,\ 15000,\ 20000,\ 25000,\ 30000\}.
$$
\begin{figure}[H]
\centering
\includegraphics[width=0.7\linewidth]{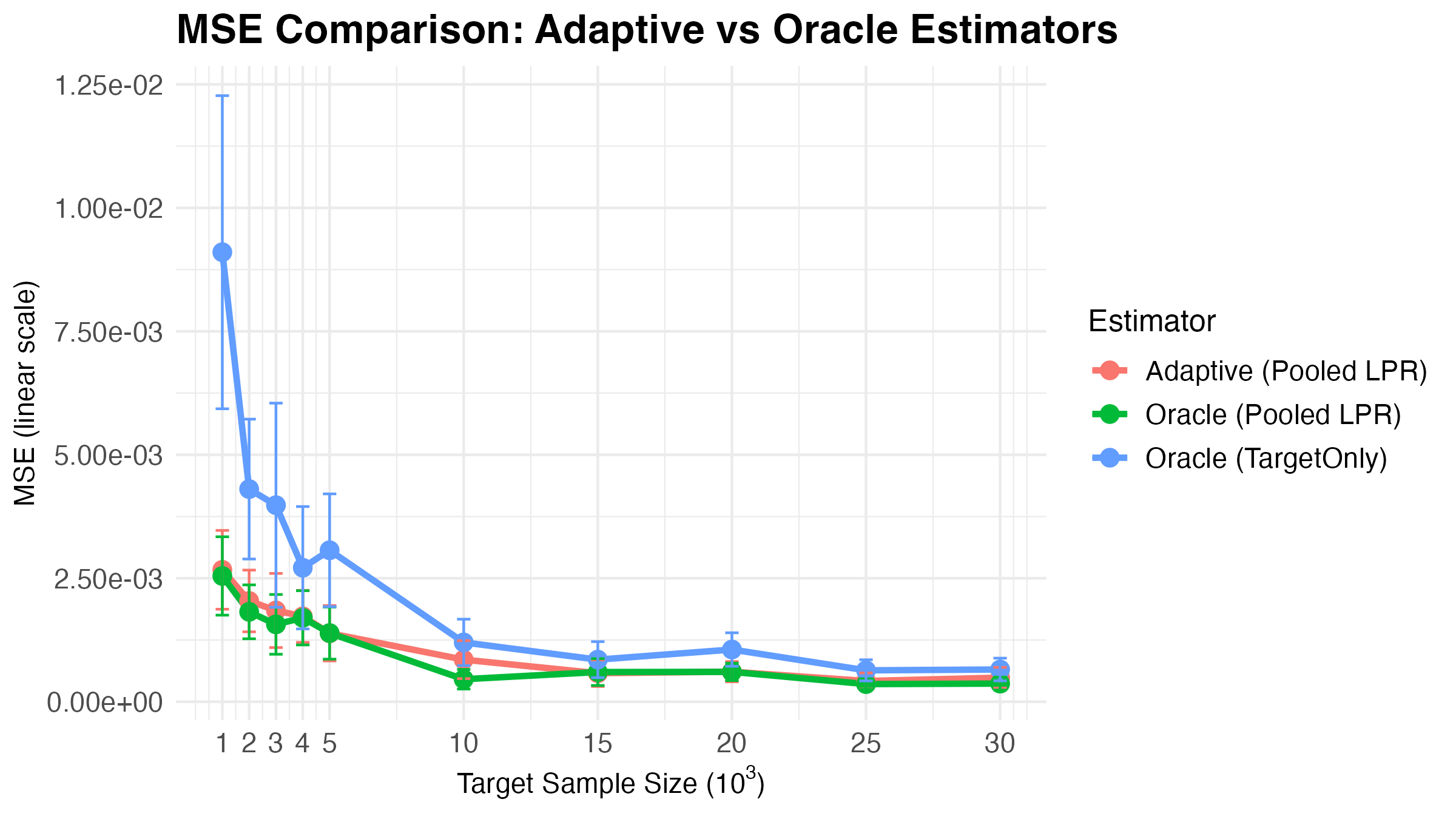}
\caption{MSE vs.\ \(n_Q\) for three estimators.}
\label{fig:comparision_adaptive}
\end{figure}
Our result is summarized in Figure~\ref{fig:comparision_adaptive}, which shows that the \textit{Adaptive (Pooled LPR)} estimator performs on par with the \textit{Oracle (Pooled LPR)} across a wide range of $n_Q$, demonstrating the robustness and accuracy of our adaptive method. Moreover, in low-sample regimes, it significantly outperforms the \textit{Oracle (Target only)} estimator, highlighting the benefit of transfer learning under covariate shift.

\bibliographystyle{chicago}
\bibliography{References}

\appendix

\section{Proofs of Main results}
\label{sec:pfmainres}

Throughout this Section, we will use $\lesssim$ to hide constants that are free of $n_P$ and $n_Q$. 
\subsection{Proof of \cref{thm:approx_manifold_upper_bound}}
 In order to prove \cref{thm:approx_manifold_upper_bound}, we first present some auxiliary lemmas. The first result is a technical lemma that will help us bound expectations under both the source and the target probability measures. 
 \begin{lem}\label{lem:techlem1}
     Suppose Assumptions \ref{asn:covshift} --- \ref{asn:regf}, \eqref{eq:kernel} hold, and $h_n\to 0$. Suppose that the target point $\xs\in \mathcal{M}_{\rho_n}$. Let $\Theta(\cdot)$ defined on $[-1,1]^D$ 
     denote a nonnegative bounded function, and let $\theta(\cdot)$ be a nonnegative function defined on the target domain so that it is bounded around a fixed neighborhood of the target point $\xs$. Then the following conclusions hold: 
     \begin{enumerate}
         \item[(i)] Suppose $h_n\le \rho_n$. Define 
         \begin{align}\label{eq:cenu1}
        \cnu := \left\{s \in \R^d: \left\|\frac{\phi(0)  - \phi(s \rho_n)}{\rho_n} + u_0 + \frac{h_n}{\rho_n} u\right\|_\infty \le 1\right\},
         \end{align}
         for $u\in [-1,1]^D$. Then the Lebesgue measure of $\cnu$ is bounded away from $0$ and $\infty$ uniformly in $n$ and $u$.
         \item[(ii)] Suppose $\rho_n\le h_n$. Define 
         \begin{align}\label{eq:cenu2}
         \ctu := \left\{s \in \R^d : \left\|\frac{\phi(sh_n) - \phi(0)}{h_n}+ \frac{\rho_n}{h_n}(u - u_0)\right\|_\infty \le 1\right\},
         \end{align}
         for $u\in [-1,1]^D$. Then the Lebesgue measure of $\ctu$ is bounded away from $0$ and $\infty$ uniformly in $n$ and $u$. 
         \item[(iii)] For $\upsilon\ge 1$, we have 
         $$\E_{X\sim P}\left[\Theta\left(\frac{X-\xs}{h_n}\right)K_{h_n}(X-\xs)^{\upsilon} \theta(X)\right]\lesssim \frac{1}{h_n^{(\upsilon-1)D}}.$$
         \item[(iv)] If $h_n\le \rho_n$, then we have: 
         $$\E_{X\sim Q}\left[\Theta\left(\frac{X-\xs}{h_n}\right)K_{h_n}(X-\xs)^{\upsilon} \theta(X)\right]\lesssim \frac{\rho_n^{d-D}}{h_n^{(\upsilon-1)D}}.$$
         \item[(v)] If $\rho_n\le h_n$, then we have: 
         $$\E_{X\sim Q}\left[\Theta\left(\frac{X-\xs}{h_n}\right)K_{h_n}(X-\xs)^{\upsilon} \theta(X)\right]\lesssim \frac{1}{h_n^{\upsilon D - d}}.$$
     \end{enumerate}
 \end{lem}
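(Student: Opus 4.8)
\textbf{Proof plan for Lemma~\ref{lem:techlem1}.}

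\emph{Parts (i) and (ii).} These are purely geometric statements about the latent-space preimage of a small ambient ball, and I would treat them together. The key tool is the bi-Lipschitz property of $\phi$ guaranteed by \eqref{eq:manifold_curvature}: for any $z,z'\in[-1,1]^d$ one has $c^{-1/2}\|z-z'\|\le \|\phi(z)-\phi(z')\|\le c^{1/2}\|z-z'\|$ (up to the usual $\ell_2$-versus-$\ell_\infty$ conversions on $\R^d$ and $\R^D$, which only costs dimension-dependent constants). For part (i), rewrite the defining condition of $\cnu$ as $\|\phi(s\rho_n)-\phi(0) - \rho_n u_0 - h_n u\|_\infty\le \rho_n$; since $h_n\le\rho_n$ and $\|u\|_\infty,\|u_0\|_\infty\le 1$, the shift vector $\rho_n u_0 + h_n u$ has $\ell_\infty$-norm at most $2\rho_n$, so membership forces $\|\phi(s\rho_n)-\phi(0)\|_\infty\le 3\rho_n$, which by the lower bi-Lipschitz bound gives $\|s\rho_n\|\lesssim \rho_n$, i.e. $\|s\|\lesssim 1$; hence $\cnu$ is contained in a ball of fixed radius, giving the upper bound on its Lebesgue measure. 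For the lower bound, exhibit a fixed-radius ball of $s$-values contained in $\cnu$: using the upper bi-Lipschitz bound, if $\|s\rho_n\|$ is smaller than a suitable constant multiple of $\rho_n$ then $\|\phi(s\rho_n)-\phi(0)\|_\infty$ is small, and one checks the $\le 1$ inequality holds provided $u_0$ is chosen (as it is in the construction feeding this lemma) so that the ``base point'' $(\phi(0)-\phi(s\rho_n))/\rho_n + u_0$ lies well inside $[-1,1]^D$ with slack; the $h_n u/\rho_n$ term has $\ell_\infty$-norm $\le 1$ but can be absorbed because $\xs\in\cM_{\rho_n}$ was used to place the relevant quantities with a bounded-away-from-boundary margin. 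Part (ii) is the mirror image with the roles of $\rho_n$ and $h_n$ swapped and $\rho_n\le h_n$, and the same bi-Lipschitz sandwich argument applies verbatim.

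\emph{Part (iii).} This is the source-measure bound and is the easy one. Since $K$ is supported on $[-1,1]^D$ and bounded above by $C_K$ (see \eqref{eq:kernel}), and $K_{h_n}(x) = h_n^{-D}K(x/h_n)$, we have $K_{h_n}(X-\xs)^\upsilon \lesssim h_n^{-\upsilon D}\mathds{1}(\|X-\xs\|\le h_n\sqrt D)$. Bounding $\Theta$ and $\theta$ by their (local) suprema, the expectation is at most a constant times $h_n^{-\upsilon D}\,P_X(\|X-\xs\|\le h_n\sqrt D)$. By Assumption~\ref{asn:lowman}(1), $p$ is bounded on the neighborhood $U'$ of $\xs$, so this probability is $\lesssim h_n^D$, and the product is $\lesssim h_n^{-(\upsilon-1)D}$.

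\emph{Parts (iv) and (v).} These are the target-measure bounds and the real content of the lemma; part (iv) ($h_n\le\rho_n$) is where I expect the main work to lie. Write $X\overset{d}{=}\phi(V)+\rho_n U$ with $V\sim q$, $U\sim g$ independent, and $\xs = \phi(v_0)+\rho_n u_0$ for appropriate $v_0,u_0$ (after a harmless translation take $v_0=0$). As in (iii), the kernel forces $\|X-\xs\|_\infty\le h_n$, so the integrand is supported on an event of the form $\|\phi(V)-\phi(0)+\rho_n(U-u_0)\|_\infty\le h_n$. Integrate first over $U$ for fixed $V$: since $g$ is bounded, the $U$-integral contributes at most $\mathrm{vol}\{u: \|\rho_n(u-u_0) + (\phi(V)-\phi(0))\|_\infty\le h_n\}\lesssim (h_n/\rho_n)^D$, and this is nonzero only when $\|\phi(V)-\phi(0)\|_\infty\lesssim\rho_n$, i.e. (bi-Lipschitz) $\|V\|\lesssim\rho_n$. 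Then the $V$-integral over this set, with $q$ bounded, contributes $\lesssim\rho_n^d$. Multiplying by the kernel factor $h_n^{-\upsilon D}$ gives $h_n^{-\upsilon D}\cdot(h_n/\rho_n)^D\cdot\rho_n^d = \rho_n^{d-D} h_n^{-(\upsilon-1)D}$, as claimed. To make the change of variables and the ``nonzero only when $\|V\|\lesssim\rho_n$'' step precise one substitutes $V = \rho_n s$ and recognizes the resulting set of admissible $s$ as exactly $\cnu$ from part (i), whose volume is $O(1)$ uniformly in $n,u$ — this is precisely why (i) was stated. For part (v) ($\rho_n\le h_n$), the substitution is instead $V = h_n s$; the $\ell_\infty$-ball constraint becomes the set $\ctu$ of part (ii), integrating over $U$ (bounded density) is now free (contributes $O(1)$ since $\rho_n/h_n\le 1$), the Jacobian from $V=h_n s$ gives $h_n^d$, and the kernel factor $h_n^{-\upsilon D}$ yields $h_n^{d-\upsilon D}$. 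The main obstacle throughout is bookkeeping: correctly tracking that $\xs\in\cM_{\rho_n}$ lets us center everything with bounded-away-from-the-boundary slack so that the truncations to $[-1,1]^d$ and $[-1,1]^D$ never bind, and converting cleanly between $\ell_2$ and $\ell_\infty$ norms without losing the uniformity in $n$ and $u$.
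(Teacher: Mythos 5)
Your treatment of parts (iii)--(v), which carry the real weight downstream, is correct and takes a slightly different but equivalent route from the paper. You integrate directly against the latent $(V,U)$ parametrization, peeling off the $U$-integral first (contributing $(h_n/\rho_n)^D$ when $h_n\le\rho_n$, or $O(1)$ when $\rho_n\le h_n$) and then the $V$-integral (contributing $\rho_n^d$ or $h_n^d$). The paper instead first writes the marginal target density $f_{n,Q}(w)=\rho_n^{-D}\int g\left(\frac{w-\phi(v)}{\rho_n}\right)q(v)\,dv$, changes variables $u=(w-\xs)/h_n$ in the outer integral, and then rescales $v$ in the inner one to land on $\cnu$ or $\ctu$. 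Both give the same bound; yours avoids writing $f_{n,Q}$ explicitly, which is a mild simplification. Your part (iii) via the indicator bound and local boundedness of $p,\Theta,\theta$ is likewise fine.

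The genuine gap is the lower bound in parts (i)--(ii). Your upper-bound argument matches the paper's (mean value theorem plus the eigenvalue bound \eqref{eq:manifold_curvature} forces $\|s\|\lesssim 1$), but the lower bound is not established. You invoke $u_0$ being ``well inside'' $[-1,1]^D$ and claim the $\frac{h_n}{\rho_n}u$ term ``can be absorbed,'' yet neither is available: $u_0\in[-1,1]^D$ may sit on the boundary, and when $h_n/\rho_n$ is near $1$ the shift $\frac{h_n}{\rho_n}u$ has $\ell_\infty$-norm up to $1$, leaving no slack. Concretely, with $D=2$, $d=1$, $\phi(z)=(z,0)$, $u_0=u=(0,1)$ and $h_n=\rho_n$, the defining inequality becomes $\|(-s,2)\|_\infty\le 1$, which has no solution, so $\cnu=\emptyset$ and the ``bounded away from $0$ uniformly in $u$'' claim fails. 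For what it is worth, the paper's own lower-bound argument is also not airtight: it claims the existence of $b'_{n,u}$ with $\nabla\phi(0)b'_{n,u}=b_{n,u}$, but $\nabla\phi(0)\in\R^{D\times d}$ with $d<D$ is injective, not surjective, so a generic $b_{n,u}\in\R^D$ need not lie in its range. Note that within this lemma only the \emph{upper} bound on $\mathrm{Leb}(\cnu)$ and $\mathrm{Leb}(\ctu)$ is used in (iv)--(v), so those parts are unaffected; but if you wish to use the lower bound (as Lemma~\ref{lem:lbdeig} does), it should be reformulated to hold only for $u$ in a set of positive measure (e.g.\ $u$ near $-u_0$), which is what the downstream eigenvalue bound actually requires.
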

 Define 
 \begin{align}\label{eq:hSn}
     \hat{S}_n := \frac{1}{n}\bZ^{\top}\bW \bZ,  \quad \mbox{and} \quad S_n := \E [\hat{S}_n/\psi_n], 
 \end{align} 
 where $\psi_n$ is defined as in \eqref{eq:psin}. Note that $\hat{S}_n$ and $S_n$ are square matrices of dimension $\deg(\ell)\times \deg(\ell)$. We skip the boldface notation here for ease of readability. Next, let us define 
 \begin{align}\label{eq:hsn}
     \hat{s}_n := \frac{1}{n}\sum_i \bz\left(\frac{X_i - \xs}{h_n}\right) K_{h_n}(X_i - \xs) Y_i \quad \mbox{and} \quad s_n:=\E[\hat{s}_n/\psi_n]\,.
 \end{align}
 The first result shows that the eigenvalues of $S_n$ are bounded away from $0$.

 \begin{lem}\label{lem:lbdeig}
 Suppose \cref{asn:lowman},  \eqref{eq:kernel} hold, and $h_n\to 0$. Then there exists $m>0$ such that, for all large enough $n$, we have $\lmn(S_n)\ge m$.
 \end{lem}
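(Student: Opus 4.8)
The goal is to show that the smallest eigenvalue of $S_n = \E[\hat S_n / \psi_n]$ is bounded below by a positive constant uniformly in $n$, where $\hat S_n = \frac{1}{n}\bZ^\top \bW \bZ$ has $(a,b)$-entry $\frac{1}{n}\sum_i \bz_a\big(\tfrac{X_i-\xs}{h_n}\big)\bz_b\big(\tfrac{X_i-\xs}{h_n}\big)K\big(\tfrac{X_i-\xs}{h_n}\big)$. Since $\lmn(S_n) = \inf_{\|v\|=1} v^\top S_n v$, it suffices to produce a lower bound for $v^\top S_n v = \E\big[\big(v^\top \bz(\tfrac{X-\xs}{h_n})\big)^2 K(\tfrac{X-\xs}{h_n})\big]/\psi_n$ (after splitting the expectation over source and target according to the mixture weights $n_P/n$ and $n_Q/n$, which matches the two terms in the definition of $\psi_n$ in \eqref{eq:psin}). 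The plan is to change variables to $u = (X-\xs)/h_n$ in both the source and target integrals, so that the integrand becomes $\big(v^\top \bz(u)\big)^2 K(u)$ times a density factor, integrated over $u$ in a region that, after the change of variables, contains a fixed ball around $0$ (or a fixed-measure set) with the density bounded below.

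The key steps, in order, would be: (1) Write $v^\top S_n v = \frac{n_P}{n\psi_n}\E_P[\cdots] + \frac{n_Q}{n\psi_n}\E_Q[\cdots]$; it is enough to lower bound \emph{either} term by a constant times its corresponding weight in $\psi_n$, since both terms are nonnegative. (2) For the source term, change variables $X = \xs + h_n u$: using that $p$ is bounded below on a neighborhood $U'$ of $\xs$ (Assumption \ref{asn:lowman}(1)) and $K \ge c_K$ on $\|u\|\le 1$ (by \eqref{eq:kernel}), and $h_n \to 0$, one gets $\E_P[\cdots] \gtrsim h_n^D \int_{\|u\|\le 1} (v^\top \bz(u))^2\,du$. (3) For the target term, when $h_n \le \rho_n$ use part (i) of Lemma \ref{lem:techlem1} (the set $\cnu$ has Lebesgue measure bounded below), and when $\rho_n \le h_n$ use part (ii) (the set $\ctu$ has measure bounded below); in either case, integrating out the latent variable $V$ and the noise $U$ against their lower-bounded densities $q, g$, one gets $\E_Q[\cdots] \gtrsim h_n^D \rho_n^{d-D}\int_{\|u\|\le 1}(v^\top\bz(u))^2\,du$ (resp. $\gtrsim h_n^d \int \cdots$), matching the second term of $\psi_n$ up to the factor $h_n^D$. (4) Combine: after dividing by $\psi_n$ (and noting $\psi_n = \frac{n_P}{n} + \frac{n_Q}{n}(\rho_n\vee h_n)^{d-D}$), both terms carry a common factor $h_n^D$ which cancels appropriately (in fact one should track that $S_n$ is defined with $\psi_n$ but \emph{not} $h_n^D$, so there will be a surviving $h_n^D$; this is consistent because $\lmn$ is claimed bounded below only — re-examining, the normalization in \eqref{eq:hSn} should make the $h_n^D$ cancel; I would verify the exact normalization convention and adjust). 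The upshot is $v^\top S_n v \gtrsim \int_{\|u\|\le 1}(v^\top \bz(u))^2\,du$. (5) Finally, lower bound $\inf_{\|v\|=1}\int_{\|u\|\le 1}(v^\top \bz(u))^2\,du$ by a positive constant: this is a purely deterministic, $n$-free fact — it holds because the monomials $\{\prod_j u_j^{\alpha_j}/\alpha_j! : \sum\alpha_j \le \ell\}$ are linearly independent as functions on $[-1,1]^D$, so the Gram matrix $\int_{\|u\|\le 1}\bz(u)\bz(u)^\top du$ is positive definite, hence has a strictly positive smallest eigenvalue $m$.

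The main obstacle is step (3)–(4): correctly handling the change of variables on the target domain when the covariates lie near (not on) the manifold, and bookkeeping the powers of $h_n$ and $\rho_n$ so they match the normalizer $\psi_n$ exactly in both regimes $h_n \le \rho_n$ and $\rho_n \le h_n$. This is precisely where Lemma \ref{lem:techlem1}(i)–(ii) is needed, since the effective integration region for the latent variable $s$ (a $d$-dimensional slice) must be shown to have Lebesgue measure bounded away from zero and infinity uniformly; the bi-Lipschitz bound \eqref{eq:manifold_curvature} on $\nabla\phi$ is what guarantees this, by controlling the Jacobian of the map $s \mapsto \phi(s\,\cdot\,)$ from above and below. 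Once the geometry is under control, the rest is the standard non-degeneracy-of-the-design argument for local polynomial regression.
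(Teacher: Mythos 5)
Your overall skeleton matches the paper's: decompose $S_n$ into source and target contributions whose weights sum to one after dividing by $\psi_n$, change variables in each, and reduce to a positive-definiteness claim for a Gram matrix of the monomial vector $\bz$. Your treatment of the source term (step 2) and of the target term in the regime $h_n\le\rho_n$ is in line with the paper's Case (i): there the window of width $h_n$ sits inside the fat slab of thickness $\rho_n$, so after substituting $u=(X-\xs)/h_n$ the effective density is bounded below over the full ball and one indeed recovers $\int_{\|u\|\le 1}(v^\top\bz(u))^2\,du$ up to constants, with the set $\cnu$ supplying the needed lower bound on the latent-variable integral.

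The gap is in the regime $\rho_n\le h_n$, which is the harder and more important case (it covers $\rho_n\ll\hns$ with $h_n\asymp\hns$, including $\rho_n=0$). There the target covariates $X=\phi(V)+\rho_n U$ fill only a thin slab of relative thickness $\rho_n/h_n$ around the tangent space $\nabla\phi(0)\R^d$, not the full ball. After the change of variables the argument of $\bz$ is $\tfrac{\phi(sh_n)-\phi(0)}{h_n}+\tfrac{\rho_n}{h_n}(u-u_0)\approx\nabla\phi(0)s+\tfrac{\rho_n}{h_n}(u-u_0)$, so the resulting Gram matrix is over this slab, not over $\|u\|\le 1$. Your claim $\E_Q[\cdots]\gtrsim h_n^d\int_{\|u\|\le 1}(v^\top\bz(u))^2\,du$ is therefore false: the left side is the Gram matrix of $\bz$ restricted to a $d$-dimensional linear subspace plus a perturbation of size $\rho_n/h_n$ in the orthogonal directions, and it cannot dominate the full-ball Gram matrix. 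The paper's proof recognizes this and introduces a separate matrix $\bB_n$, the Gram matrix of $\bz\bigl(\nabla\phi(0)s+\tfrac{\rho_n}{h_n}(u-u_0)\bigr)$ over $s\in\ctu$, $u\in[-1,1]^D$, and argues its non-degeneracy by a polynomial non-vanishing argument on the image of that map; that is the crux of the lemma, and your reduction erases it. Separately, in step (1) the phrase ``it is enough to lower bound \emph{either} term'' is wrong as stated: the two convex weights $\tfrac{n_P/n}{\psi_n}$ and $\tfrac{(n_Q/n)(\rho_n\vee h_n)^{d-D}}{\psi_n}$ sum to $1$ but either one can vanish asymptotically (e.g.\ when the target dominates and $(\rho_n\vee h_n)^{d-D}\to\infty$), so you must control both $T_P$ and the appropriately rescaled $T_Q$, which your steps (2)--(3) do go on to attempt but your summary does not require.
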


 The second result shows that $\hat{S}_n$ concentrates around $S_n$, which when coupled with \cref{lem:lbdeig}, implies that the minimum eigenvalue of $\hat{S}_n$ is also bounded away from $0$ with high probability.

 \begin{lem}
    \label{lem:conc_matrix}
    Suppose \cref{asn:lowman},   \eqref{eq:kernel} hold, and $h_n\to 0$. 
    For all large enough $n$, the matrix $\hat S_n/\psi_n$ satisfies: 
    $$
    \P\left(\left\|\frac{\hat S_n}{\psi_n} - S_n\right\| \ge 2t\right) \le C\exp{\left(-\frac{t^2 (n\psi_nh_n^D)}{c_1 + c_2 t}\right)} \,,
    $$
    for some constants $C, c_1, c_2 > 0$. Consequently, we have: 
    $$
    \E\left[\left\|\frac{\hat S_n}{\psi_n} - S_n\right\|^k\right] \le \frac{C'}{(n\psi_nh_n^D)^{k/2}}  \,,
    $$
    for some constant $C' > 0$.  
\end{lem}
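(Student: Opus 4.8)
The approach is to recognize $\hat S_n$, defined in \eqref{eq:hSn}, as a sum of \emph{independent} random matrices and invoke a Bernstein inequality for matrix-valued sums. Write
\[
\hat S_n = \tfrac1n\sum_{i=1}^n M_i,\qquad M_i := \bz\!\Big(\tfrac{X_i-\xs}{h_n}\Big)\,\bz\!\Big(\tfrac{X_i-\xs}{h_n}\Big)^{\!\top} K_{h_n}(X_i-\xs),
\]
so each $M_i$ is a symmetric positive semidefinite $\deg(\ell)\times\deg(\ell)$ matrix; moreover $M_1,\dots,M_{n_P}$ are i.i.d.\ (governed by $P_X$), $M_{n_P+1},\dots,M_n$ are i.i.d.\ (governed by $Q_X$), and the two blocks are independent. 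Hence $\hat S_n/\psi_n-S_n=\tfrac1{n\psi_n}\sum_{i=1}^n(M_i-\E M_i)$ is a rescaled sum of $n$ independent, centered, self-adjoint random matrices.

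Matrix Bernstein needs two inputs. First, a uniform bound on the summands: since $K$ is supported on $[-1,1]^D$ and bounded by $C_K$ (see \eqref{eq:kernel}), while $\bz(\cdot)$ is a fixed tuple of monomials and hence uniformly bounded on $\{\|u\|_\infty\le1\}$, one gets $\|M_i\|\lesssim h_n^{-D}$ almost surely, so $\|M_i-\E M_i\|\le 2\max_i\|M_i\|=:L\lesssim h_n^{-D}$. Second, a bound on the matrix variance $\sigma^2:=\big\|\sum_{i=1}^n\E(M_i-\E M_i)^2\big\|\le\sum_{i=1}^n\|\E M_i^2\|$. This is precisely what \cref{lem:techlem1} delivers: taking $\upsilon=2$, $\theta\equiv1$, and letting $\Theta$ range over the (bounded) pairwise products of entries of $\bz(\cdot)$, part (iii) gives $\|\E M_i^2\|\lesssim h_n^{-D}$ for each source index, while parts (iv)--(v) --- according to whether $h_n\le\rho_n$ or $\rho_n\le h_n$ --- give $\|\E M_i^2\|\lesssim h_n^{-D}(\rho_n\vee h_n)^{d-D}$ for each target index. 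Summing over the two blocks and recalling $\psi_n$ from \eqref{eq:psin},
\[
\sigma^2\ \lesssim\ n_P h_n^{-D}+n_Q h_n^{-D}(\rho_n\vee h_n)^{d-D}\ =\ h_n^{-D}\,n\psi_n.
\]
(The same estimates with $\upsilon=1$ show $\|\E M_i\|$ is $O(1)$ for source and $O((\rho_n\vee h_n)^{d-D})$ for target indices, so $S_n$ is norm-bounded, consistent with \cref{lem:lbdeig}.)

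Plugging $L$ and $\sigma^2$ into the matrix Bernstein inequality, $\P\big(\|\sum_i(M_i-\E M_i)\|\ge s\big)\le 2\deg(\ell)\exp\!\big(-\tfrac{s^2/2}{\sigma^2+Ls/3}\big)$, and choosing $s=2t\,n\psi_n$ (so the left-hand side equals $\P(\|\hat S_n/\psi_n-S_n\|\ge2t)$), we divide numerator and denominator of the exponent by $n\psi_n$ to obtain a bound of the form $C\exp\!\big(-c\,t^2 n\psi_n/(h_n^{-D}+t)\big)$; multiplying inside the exponent by $h_n^{D}/h_n^{D}$ and using $h_n^D\le1$ (valid since $h_n\to0$) turns this into $C\exp\!\big(-t^2(n\psi_n h_n^D)/(c_1+c_2 t)\big)$, which is the claimed tail bound. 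For the moment bound, set $Z:=\|\hat S_n/\psi_n-S_n\|$ and $N:=n\psi_n h_n^D$; the tail above is sub-gamma, so the layer-cake formula $\E Z^k=\int_0^\infty kz^{k-1}\P(Z\ge z)\,dz\lesssim_k N^{-k/2}+N^{-k}$, and since $N\to\infty$ for $n$ large, the first term dominates, yielding $\E Z^k\lesssim N^{-k/2}$.

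I expect the only genuinely delicate step to be the variance estimate: one must split the sum into its source and target blocks and feed each into the appropriate part of \cref{lem:techlem1} so that the $(\rho_n\vee h_n)$-factors reassemble exactly into $\psi_n$ and the powers of $h_n$ collapse to precisely $n\psi_n h_n^D$ in the exponent. Everything else --- the uniform bound on the summands, the matrix Bernstein application, and the tail integration --- is mechanical.
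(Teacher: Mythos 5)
Your proof is correct, and the final bounds match those in the paper; the only methodological difference is the route from the scalar concentration inputs to the matrix tail bound. You invoke the matrix Bernstein inequality directly on $\sum_i (M_i - \E M_i)$, whereas the paper first reduces the operator-norm deviation to a scalar deviation via a covering-type lemma (Vershynin, Lemma 4.4.1), i.e.\ $\P(\|\hat S_n/\psi_n - S_n\|\ge 2t)\le C\max_{\|a\|=1}\P(|a^\top(\hat S_n/\psi_n - S_n)a|\ge t)$, and then applies the \emph{scalar} Bernstein inequality to $\sum_i H_i(a)$ with $H_i(a)=\frac{1}{n\psi_n}\big[(a^\top\bz((X_i-\xs)/h_n))^2 K_{h_n}(X_i-\xs)-\E(\cdot)\big]$. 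Both routes feed on the same two inputs: the $L^\infty$ bound $\|M_i\|\lesssim h_n^{-D}$ from compactness of $K$ and boundedness of $\bz$, and the variance estimate $\sum_i\Var\lesssim n\psi_n h_n^{-D}$, which in both arguments comes from \cref{lem:techlem1} parts (iii)--(v) with $\upsilon=2$, split into source and target blocks exactly as you do. The scalar reduction has the small advantage of working only with nonnegative scalars $(a^\top\bz)^2K$ so that \cref{lem:techlem1} applies verbatim; your matrix-Bernstein route needs a minor extra remark when bounding $\|\E M_i^2\|_{\rm op}$, since the individual entries of $M_i^2$ are signed — bounding via $\|\E M_i^2\|_{\rm op}\le\operatorname{tr}(\E M_i^2)=\E[(\bz^\top\bz)^2K^2]$ (a nonnegative integrand) closes this gap cleanly. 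Your tail-integration step for the $k$th moment mirrors the paper's and is fine.
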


\begin{lem}
    \label{lem:bounding_mean_part}
    Suppose Assumptions \ref{asn:covshift}---\ref{asn:regf}, \eqref{eq:kernel}  hold, and $h_n\to 0$. Then $\E[\|\hat s_n/\psi_n\|_2^4] \lesssim 1$. 
\end{lem}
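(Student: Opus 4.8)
The plan is to isolate the mean and the noise contributions. Write $Y_i = f^\star(X_i) + \xi_i$ with $\xi_i := Y_i - f^\star(X_i)$, so that $\hat s_n = A_n + B_n$, where $A_n := \tfrac1n\sum_i \bz_i K_{h_n,i} f^\star(X_i)$ and $B_n := \tfrac1n\sum_i \bz_i K_{h_n,i} \xi_i$, using the shorthand $\bz_i := \bz((X_i-\xs)/h_n)$ and $K_{h_n,i} := K_{h_n}(X_i-\xs)$. Since $K_{h_n,i}$ vanishes off $\{\|X_i-\xs\|\le h_n\}$ and $h_n\to 0$, on this event (for $n$ large) $\|\bz_i\|$ and $|f^\star(X_i)|$ are bounded by absolute constants — the latter via the $|\mathbf t|=0$ case of \cref{asn:regf} — and $\E[\xi_i^4\mid X_i]$ is bounded by a constant, on both $P$ and $Q$, by the third part of \cref{asn:lowman}. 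It then suffices to prove $\E\|A_n\|^4\lesssim\psi_n^4$ and $\E\|B_n\|^4\lesssim\psi_n^4$.

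The workhorse estimate, obtained by applying \cref{lem:techlem1}(iii)--(v) to the $n_P$ source and $n_Q$ target terms separately, is that for any fixed $\upsilon\ge 1$,
\[
\sum_{i=1}^{n}\E\big[K_{h_n,i}^{\upsilon}\cdot(\text{bounded factor})\big]\ \lesssim\ n\,h_n^{-(\upsilon-1)D}\,\psi_n,\qquad \psi_n=\frac{n_P}{n}+\frac{n_Q}{n}(\rho_n\vee h_n)^{d-D}.
\]
For $A_n$, bounding $f^\star$ and $\bz_i$ by constants gives $\|A_n\|\lesssim \tfrac1n\sum_i K_{h_n,i}$, and I would expand the fourth moment of this average of independent (not identically distributed) nonnegative variables; keeping only the mean, the ``all four indices equal'' terms, and the $2{+}2$ index pairings, and then inserting the display with $\upsilon\in\{1,2,4\}$ yields $\E[(\tfrac1n\sum_i K_{h_n,i})^4]\lesssim \psi_n^4 + n^{-3}h_n^{-3D}\psi_n + n^{-2}h_n^{-2D}\psi_n^2$. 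For $B_n$ I would condition on $(X_1,\dots,X_n)$: the summands $\bz_i K_{h_n,i}\xi_i$ are then conditionally independent with conditional mean zero, so — working one coordinate at a time, there being only $\deg(\ell)=O(1)$ of them — the odd cross terms drop and the conditional fourth moment is $\lesssim \sum_i K_{h_n,i}^4 + (\sum_i K_{h_n,i}^2)^2$; taking expectations, using independence of the $K_{h_n,i}^2$ once more on $\E(\sum_i K_{h_n,i}^2)^2$, together with the display above gives $\E\|B_n\|^4\lesssim n^{-3}h_n^{-3D}\psi_n + n^{-2}h_n^{-2D}\psi_n^2$.

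The remaining point is that the two remainder terms are $\lesssim\psi_n^4$: dividing through, $n^{-3}h_n^{-3D}\psi_n\lesssim\psi_n^4$ and $n^{-2}h_n^{-2D}\psi_n^2\lesssim\psi_n^4$ are exactly $h_n^{-3D}\lesssim(n\psi_n)^3$ and $h_n^{-2D}\lesssim(n\psi_n)^2$, so both reduce to the single requirement $n\psi_nh_n^D\gtrsim 1$. This holds — with room to spare, $n\psi_nh_n^D\to\infty$ — for the bandwidths of \eqref{eq:def_h}; e.g.\ when $\rho_n\gtrsim\hns$ one computes $n\psi_nh_n^D=(n_P+n_Q\rho_n^{d-D})^{2\beta/(2\beta+D)}$, and similarly in the other regime. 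Combining, $\E\|\hat s_n/\psi_n\|^4 \le 8\psi_n^{-4}(\E\|A_n\|^4+\E\|B_n\|^4)\lesssim 1$. The main obstacle is nothing deep but rather careful bookkeeping in the two fourth-moment expansions for sums of independent, non-identically-distributed terms: one must track which index patterns survive and verify that every surviving term, after dividing by $\psi_n^4$, collapses to the effective-local-sample-size condition $n\psi_nh_n^D\to\infty$.
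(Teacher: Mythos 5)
Your proof is correct and follows the same fundamental decomposition as the paper's: split $\hat s_n$ into a signal contribution $A_n=\tilde s_n$ and a noise contribution $B_n=\hat s_n-\tilde s_n$. The handling of $B_n$ is essentially the paper's \cref{lem:eps_4_moment_bound} (condition on covariates, use the conditional fourth-moment bound in \cref{asn:lowman}(3), invoke \cref{lem:techlem1} at $\upsilon\in\{2,4\}$). For $A_n$, however, you take a genuinely different route: where the paper first establishes $\|s_n\|\lesssim 1$ and then controls the \emph{centered} quantity $\tilde s_n/\psi_n-s_n$ via a covering-net plus vector Bernstein concentration inequality and a tail integral (\cref{lem:bound_centered_s_n_tilde}), you expand $\E[(\tfrac1n\sum_i K_{h_n,i})^4]$ directly as a sum over index partitions and read off the answer. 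This is more elementary — no concentration inequality needed, just \cref{lem:techlem1} with $\upsilon=1,2,3,4$ — and it folds the paper's two steps (bounding the mean and bounding the centered moment) into one computation. What the paper's approach buys in exchange is that \cref{lem:bound_centered_s_n_tilde} is also reused elsewhere (in the adaptive analysis), so the concentration inequality does double duty.

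Two small cautions. First, your stated list of surviving index patterns is incomplete: for an \emph{uncentered} sum of nonnegative variables, the $2{+}1{+}1$ and $3{+}1$ patterns also contribute (indeed $2{+}1{+}1$ yields the largest remainder, of order $\psi_n^4/(n\psi_n h_n^D)$), so the expansion you wrote down should include them. Fortunately every pattern, after dividing by $\psi_n^4$, is a power of $1/(n\psi_n h_n^D)$, so the conclusion stands once $n\psi_n h_n^D\gtrsim 1$ — but the bookkeeping should be done in full. Second, both your argument and the paper's implicitly rely on $n\psi_n h_n^D\to\infty$ (or at least $\gtrsim 1$) even though the lemma's hypotheses only state $h_n\to 0$; this is a standing implicit assumption justified by the eventual bandwidth choice in \eqref{eq:def_h}, and you are right to flag it explicitly.
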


\begin{proof}[Proof of \cref{thm:approx_manifold_upper_bound}]
    Recall (from \eqref{eq:weightvec} and \eqref{eq:ourestim}) that our estimator can be written as: 
$$
\hat f(\xs) := \begin{cases}
  e_1^\top \left(\frac{\hat S_n}{\psi_n}\right)^{-1} \left(\frac{\hat s_n}{\psi_n}\right) = \sum_{i=1}^n w_iY_i & \mbox{if}\; \lmn\left(\bZ^{\top} \bW \bZ\right)\ge n\tau_n \psi_n \\ 0 & \mbox{otherwise}\,,
\end{cases}
$$
 where $\hat{S}_n$ and $\hat{s}_n$ are defined in \eqref{eq:hSn} and \eqref{eq:hsn} respectively, and the scaling factor $\psi_n = (n_P/n) + (n_Q/n)(\rho_n \vee h_n)^{d - D}$ (see \eqref{eq:psin}).  
The definition of this estimator relies on the invertibility of $\hat S_n$ or $\hat S_n/\psi_n$. By \cref{lem:lbdeig}, we know that there exists $m>0$ such that $\lambda_{\min}(S_n) \ge m$. 
Based on $\hat S_n, S_n$, We first define a ``good" set
\begin{align}\label{eq:gev}
    \Omega_n := \left\{\left\lVert \frac{\hat{S}_n}{\psi_n}-S_n\right\rVert_{\mathrm\mathrm{op}}\le \frac{1}{2}\lmn(S_n)\right\}\,.
\end{align} 
Note that as $\tau_n$ becomes small for large $n$, by \cref{lem:lbdeig}, eventually it will be smaller than $\lambda_{\min}(S_n)/2$ . Therefore, the estimator will be non-zero on the set $\Omega_n$ almost surely. Based on this, we can first decompose the MSE as: 
$$
\E[(\hat f(\xs) - f^\star(\xs))^2] = \underbrace{\E[(\hat f(\xs) - f^\star(\xs))^2\mathds{1}_{\Omega_n}]}_{ =: I}  + \underbrace{\E[(\hat f(\xs) - f^\star(\xs))^2\mathds{1}_{\Omega^c_n}]}_{ =: II}
$$
{\bf Term I: }Let's first start with $I$ by performing a simple bias-variance decomposition: 
\begin{align*}
    I = \E[(\hat f(\xs) - f^\star(\xs))^2\mathds{1}_{\Omega_n}] = \Var(\hat f(\xs)\mathds{1}_{\Omega_n}) + \left(\E[(\hat f(\xs) - f^\star(\xs))\mathds{1}_{\Omega_n}]\right)^2
\end{align*}
{\bf Bias of Term I: }First, consider the bias part: from the definition of our estimator, we can write: 
\begin{align}\label{eq:biasbd}
    \E[(\hat f(\xs) - f^\star(\xs))\mathds{1}_{\Omega_n}] & = \E\left[\sum_i w_i (Y_i - f^\star(\xs))\mathds{1}_{\Omega_n}\right] \nonumber \\
    & = \E\left[\sum_i w_i (f^\star(X_i) - f^\star(\xs))\mathds{1}_{\Omega_n}\right] \nonumber \\
    & \overset{(i)}{=} \E\left[\sum_i w_i \left(\underbrace{f^\star(X_i) - f^\star(\xs) - \sum_{0 < \balpha \le \lfloor \beta \rfloor} \frac{(X_i - \xs)^{\balpha}}{\balpha!} (f^\star)^{[\balpha]}(\xs)}_{ =: R_i(\xs)}\right)\mathds{1}_{\Omega_n}\right] \nonumber\\
    & = \E\left[\sum_i w_i R_i(\xs) \mathds{1}_{\Omega_n}\right]\nonumber \\
    & = \frac{1}{n\psi_n}\sum_i \E\left[e_1^\top(\hat S_n/\psi_n)^{-1}\bz\left(\frac{X_i - \xs}{h_n}\right) K_{h_n}(X_i - \xs)R_i(\xs)\mathds{1}_{\Omega_n}\right] \nonumber\\
    & \overset{(ii)}{\le} Lh_n^{\beta}\frac{1}{n\psi_n}\sum_i \E\left[\|(\hat S_n /\psi_n)^{-1}\|_{\mathrm{op}} \|\bz((X_i - \xs)/h)\|K_{h_n}(X_i - \xs)\mathds{1}_{\Omega_n}\right]\nonumber \\
    & \overset{(iii)}{\le} \frac{2Lh_n^\beta}{\lambda_{\min}(S_n)}\frac{1}{n\psi_n}\sum_i \E\left[\|\bz((X_i - \xs)/h_n)\| K_{h_n}(X_i - \xs)\right].
\end{align}
Here (i) follows from the reproducing property (see \eqref{eq:reproducing}), (ii) follows from the fact that $K$ is compactly supported (by choice) and the H\"{o}lder assumption on $f^\star$ (see \cref{asn:regf}), and (iii) follows from the fact that, on $\Omega_n$, $\|(\hat S_n /\psi_n)^{-1}\| = 1/\lambda_{\min}(\hat S_n/\psi_n) \le 2/\lambda_{\min}(S_n)$, which in turn follows from a simple application of Weyl's inequality (see \cite{Weyl1912}). Next we will bound the inner expectation in the above display when $X_i\sim P$ and $X_i\sim Q$ separately. To wit, observe that for all large enough $n$, by \cref{lem:techlem1}, part (iii), we have:
\begin{align*}
     \bbE_P\left[\|\bz((X - \xs)/h_n)\|K_{h_n}(X - \xs)\right] \lesssim 1\,.
\end{align*}
Similarly, by \cref{lem:techlem1}, parts (iv) and (v), we get:
\begin{align*}
     \bbE_Q\left[\|\bz((X - \xs)/h_n)\||K_{h_n}(X - \xs)|\right] \lesssim (\rho_n\vee h_n)^{d-D}\,.
\end{align*}

\vspace{0.05in}

\noindent Therefore, the bias term in \eqref{eq:biasbd} is bounded above by: 
\begin{align}\label{eq:biasgenbd}
 \E[(\hat f(\xs) - f^\star(\xs))\mathds{1}_{\Omega_n}] \lesssim h_n^\beta \frac{1}{n\psi_n}(n_P+n_Q(\rho_n\vee h_n)^{d-D})=h_n^{\beta}\,.
\end{align}

\noindent
{\bf Variance of Term I: }We will use the inequality $\Var(X + Y) \le 2\Var(X) + 2\Var(Y)$ multiple times.  A first application of this inequality yields: 
\begin{align*}
\Var(\hat f(\xs)\mathds{1}_{\Omega_n}) & = \Var(e_1^\top(\hat S_n/\psi_n)^{-1}(\hat s_n/\psi_n)\mathds{1}_{\Omega_n}) \\
& = \Var(e_1^\top S_n^{-1}(\hat s_n/\psi_n)\mathds{1}_{\Omega_n} + e_1^\top((\hat S_n/\psi_n)^{-1} - S_n^{-1})(\hat s_n/\psi_n)\mathds{1}_{\Omega_n}) \\
& \le 2\Var(e_1^\top S_n^{-1}(\hat s_n/\psi_n)\mathds{1}_{\Omega_n}) + 2\Var(e_1^\top((\hat S_n/\psi_n)^{-1} - S_n^{-1})(\hat s_n/\psi_n)\mathds{1}_{\Omega_n})
\end{align*}
Let us first consider the second term: 
\begin{align*}
  \Var(e_1^\top((\hat S_n/\psi_n)^{-1} - S_n^{-1})(\hat s_n/\psi_n)\mathds{1}_{\Omega_n}) & \le \E\left[\left(e_1^\top((\hat S_n/\psi_n)^{-1} - S_n^{-1})(\hat s_n/\psi_n)\right)^2\mathds{1}_{\Omega_n}\right] \\
  & \le \E\left[\|(\hat S_n/\psi_n)^{-1} - S_n^{-1}\|^2\|(\hat s_n/\psi_n)\|_2^2{\bf 1}(\Omega_n)\right] \\
  & \le \sqrt{\E\left[\|(\hat S_n/\psi_n)^{-1} - S_n^{-1}\|^4 {\bf 1}(\Omega_n)\right]\E\left[\|(\hat s_n/\psi_n)\|_2^4 {\bf 1}(\Omega_n)\right]}\,.
\end{align*}
Now we note that on the event $\Omega_n$, we have:
\begin{align*}
    \lVert (\hat{S}_n/\psi_n)^{-1}-S_n^{-1}\rVert &\le \lVert (\hat{S}_n/\psi_n)^{-1}\rVert \lVert S_n^{-1}\rVert\lVert (\hat{S}_n/\psi_n)-S_n\rVert \\ &\le \lVert S_n^{-1}\rVert \rVert (\lVert \hat{S}_n/\psi_n)^{-1}-S_n^{-1}\rVert + \lVert S_n^{-1}\rVert)\lVert (\hat{S}_n/\psi_n)-S_n\rVert.
\end{align*}
As $\lVert S_n^{-1}\rVert \lVert (\hat{S}_n/\psi_n)-S_n\rVert \le 1/2$ on $\Omega_n$, the above display implies that, on $\Omega_n$, the following holds: 
\begin{align}\label{eq:invbound}
\lVert (\hat{S}_n/\psi_n)^{-1}-S_n^{-1}\rVert \le 2\lVert S_n^{-1}\rVert^2 \lVert (\hat{S}_n/\psi_n)-S_n\rVert \lesssim \lVert (\hat{S}_n/\psi_n)-S_n\rVert,
\end{align}
where the last inequality uses \cref{lem:lbdeig}. Therefore, we have: 
\begin{align*}
\Var(e_1^\top((\hat S_n/\psi_n)^{-1} - S_n^{-1})(\hat s_n/\psi_n)\mathds{1}_{\Omega_n}) \lesssim \sqrt{\E\big[\lVert \hat{S}_n-S_n\rVert^4\big] \E\big[\lVert (\hat{s}_n/\psi_n)\rVert^4\big]}\,.
\end{align*}
The second term is bounded by some constant as proved in Lemma \ref{lem:bounding_mean_part}. 
For the first term, we use Lemma \ref{lem:conc_matrix}, which yields 
$$
 \Var(e_1^\top((\hat S_n/\psi_n)^{-1} - S_n^{-1})(\hat s_n/\psi_n)\mathds{1}_{\Omega_n}) \lesssim \frac{1}{n\psi_n h_n^D} = \frac{1}{n_P h_n^D + n_Q (\rho_n\vee h_n)^{d-D}h_n^D} \,.
$$
Now, we will go back to the other term, i.e., $\Var(e_1^\top S_n^{-1}(\hat s_n/\psi_n)\mathds{1}_{\Omega_n})$. Once again, the minimum eigenvalue of $S_n$ is bounded away from $0$ by \cref{lem:lbdeig}. We Further decompose this variance as: 
$$
\Var(e_1^\top S_n^{-1}(\hat s_n/\psi_n)\mathds{1}_{\Omega_n}) \le  2\Var(e_1^\top S_n^{-1}(\hat s_n/\psi_n)) + 2 \Var(e_1^\top S_n^{-1}(\hat s_n/\psi_n)\mathds{1}_{\Omega_n^c})
$$
For the first term: 
\begin{align*}
    & \Var(e_1^\top S_n^{-1}(\hat s_n/\psi_n)) \\
    & = \Var\left(e_1^\top S_n^{-1}\left(\frac{1}{n\psi_n}\sum_i \bz\left(\frac{X_i - \xs}{h_n}\right)K_{h_n}(X_i - \xs) Y_i \right)\right) \\
    & = \frac{1}{(n\psi_n)^2}\left\{n_P \Var_P\left(a^\top\bz\left(\frac{X - \xs}{h_n}\right)K_{h_n}(X - \xs) Y\right) + n_Q \Var_Q\left(a^\top\bz\left(\frac{X - \xs}{h_n}\right)K_{h_n}(X - \xs) Y\right)\right\}
\end{align*}
where $a := S_n^{-1}e_1$. By using the law of total variance, we get:
\begin{align*}
    &\;\;\;\;\Var_P\left(a^\top\bz\left(\frac{X - \xs}{h_n}\right)K_{h_n}(X - \xs) Y\right) \\ & = \Var_P\left(a^\top\bz\left(\frac{X - \xs}{h_n}\right)K_{h_n}(X - \xs) f^\star(\xs)\right)+\bbE_P\left[\left(a^\top\bz\left(\frac{X - \xs}{h_n}\right)K_{h_n}(X - \xs)\right)^2\Var(Y|X)\right] \lesssim \frac{1}{h_n^D},
\end{align*}
where the last inequality follows from \cref{lem:techlem1}, part (iii). A similar computation coupled with \cref{lem:techlem1}, parts (iv) and (v), we get: 
\begin{align*}
    &\;\;\;\;\Var_P\left(a^\top\bz\left(\frac{X - \xs}{h_n}\right)K_{h_n}(X - \xs) Y\right) \lesssim \frac{(\rho_n\vee h_n)^{d-D}}{h_n^D},
\end{align*}
We therefore conclude that:
$$
\Var(e_1^\top S_n^{-1}(\hat s_n/\psi_n)) \lesssim \frac{1}{(n\psi_n)^2} \left(\frac{n_P}{h_n^D} + \frac{n_Q (\rho_n\vee h_n)^{d-D}}{h_n^{D}}\right) =  \frac{1}{n\psi_n h_n^D} \,.
$$
So far our conclusion is: 
$$
\Var(\hat f(\xs) \mathds{1}_{\Omega_n}) \lesssim \frac{1}{n\psi_n h_n^D} + \ \Var(e_1^\top S_n^{-1}(\hat s_n/\psi_n)\mathds{1}_{\Omega_n^c}) \,.
$$
Now we control the second term in the above display. Recall that $\lmn(S_n)\ge m>0$ for all large enough $n$ by \cref{lem:lbdeig}. Therefore, note that by \cref{lem:conc_matrix}, there exists constants $c_1,c_2>0$ such that 
\begin{align}\label{eq:tbd}
\P(\Omega_n^c) \lesssim \exp\left(-\frac{m^2 (n\psi_nh_n^D)}{c_1 + c_2 m}\right) \,.
\end{align}
Therefore, for all large enough $n$, we have: 
\begin{align*}
    \Var(e_1^\top S_n^{-1}(\hat s_n/\psi_n)\mathds{1}_{\Omega_n^c}) \lesssim \sqrt{\E\bigg\lVert \frac{\hat{s}_n}{\psi_n}\bigg\rVert^4}\sqrt{\P(\Omega_n^c)}\lesssim \exp\left(-\frac{m^2 (n\psi_nh_n^D)}{c_1 + c_2 m}\right) = o\big((n\psi_n h_n^D)^{-1}\big).
\end{align*}
The last inequality follows from \eqref{eq:tbd} and \cref{lem:bounding_mean_part}.
\\\\
{\bf Term II: }To bound II, we first use a simple inequality: 
$$
\E[(\hat f(\xs) - f^\star(\xs))^2\mathds{1}_{\Omega^c_n}] \le 2 \E[\hat f^2(\xs)\mathds{1}_{\Omega^c_n}] + 2\E[f^\star(\xs)^2 \mathds{1}_{\Omega^c_n}] \,.
$$
By \eqref{eq:tbd}, 
$
\E[f^\star(\xs)^2 \mathds{1}_{\Omega^c_n}] = o((n\psi_nh_n^D)^{-1}) \,.
$
 
Now, for the term involving $\hat f(\xs)$, we note that for all large enough $n$, we have:

\begin{align}\label{eq:meano1}
\E[\hat f^2(\xs) \mathds{1}_{\Omega_n^c}] & = \E\left[\hat f^2(\xs) \mathds{1}_{\Omega_n^c \cap \{\lambda_{\min}(\hat S_n/\psi_n) > \tau_n\}}\right] \nonumber \\
& = \E\left[\left(e_1^\top \left(\frac{\hat S_n}{\psi_n}\right)^{-1} \left(\frac{\hat s_n}{\psi_n}\right)\right)^2\mathds{1}_{\Omega_n^c \cap \{\lambda_{\min}(\hat S_n/\psi_n) > \tau_n\}}\right] \nonumber \\
& \le \E\left[\left\|\left(\frac{\hat S_n}{\psi_n}\right)^{-1}\right\|_{\rm op}^2\left\|\frac{\hat s_n}{\psi_n}\right\|_2^2\mathds{1}_{\Omega_n^c \cap \{\lambda_{\min}(\hat S_n/\psi_n) > \tau_n\}}\right] \nonumber \\
& \le \E\left[\left(\lambda_{\min}\left(\frac{\hat S_n}{\psi_n}\right)\right)^{-2} \left\|\frac{\hat s_n}{\psi_n}\right\|_2^2\mathds{1}_{\Omega_n^c \cap \{\lambda_{\min}(\hat S_n/\psi_n) > \tau_n\}}\right] \nonumber \\
& \le \tau_n^{-2}\E\left[ \left\|\frac{\hat s_n}{\psi_n}\right\|_2^2\mathds{1}_{\Omega_n^c}\right] \nonumber \\
& \le \tau_n^{-2} \sqrt{\E\left[ \left\|\frac{\hat s_n}{\psi_n}\right\|_4^2\right]}\sqrt{\P(\Omega_n^c)}  \lesssim \tau_n^{-2} \exp{\left(-\frac{m^2 (n\psi_nh_n^D)}{c_1 + c_2 m}\right)} = o\big((n\psi_n h_n^D)^{-1}\big) \,.
\end{align}
In the last inequality, we use \eqref{eq:tbd},  \cref{lem:bounding_mean_part} and the fact that $\tau_n=(n\psi_n h_n^D)^{-p}$ for some $p > 1$. 

\vspace{0.05in}

\noindent Combining the above observations, we get: 

\begin{align}\label{eq:genbound}
\E[(\hat{f}(\xs)-f^\star(\xs))^2]\lesssim h_n^{2\beta}+\frac{1}{n_P h_n^D+n_Q(\rho_n\vee h_n)^{d-D}h_n^D}.
\end{align}

\noindent To get our final rates, we split it into two cases. 

\noindent When $\rho_n\le C_2 (n_P^{\frac{2\beta+d}{2\beta+D}}+n_Q)^{-\frac{1}{2\beta+d}}$, choose $h_n:=C_4(n_P^{\frac{2\beta+d}{2\beta+D}}+n_Q)^{-\frac{1}{2\beta+d}}$. By adjusting the constants, if needed, we have $\rho_n\vee h_n=h_n$. Therefore, in this case, we have: 
\begin{align*}
    \E[(\hat{f}(\xs)-f^\star(\xs))^2] &\lesssim h_n^{2\beta}+\frac{1}{n_P h_n^D +n_Q h_n^d}.
\end{align*}
We claim that $h_n^{2\beta}\gtrsim (n_P h_n^D +n_Q h_n^d)^{-1}$ with the above choices. It suffices to show that $n_P h_n^{2\beta+D}+n_Q h_n^{2\beta+d}\gtrsim 1$. If $n_Q\lesssim n_p^{\frac{2\beta+d}{2\beta+D}}$, then $h_n\ge n_P^{-\frac{1}{2\beta+D}}$. Therefore, $n_P h_n^{2\beta+D}+n_Q h_n^{2\beta+d}\gtrsim n_P h_n^{2\beta+D}\gtrsim 1$. Similarly, if $n_Q\gtrsim n_P^{\frac{2\beta+d}{2\beta+D}}$, then $h_n\gtrsim n_Q^{-\frac{1}{2\beta+d}}$. As a result, $n_P h_n^{2\beta}+n_Q h_n^{2\beta+d}\gtrsim n_Q h_n^{2\beta+d}\gtrsim 1$. Therefore, 
\begin{align*}
    \E[(\hat{f}(\xs)-f^\star(\xs))^2] &\lesssim h_n^{2\beta}\lesssim \big(n_P^{\frac{2\beta+d}{2\beta+D}}+n_Q)^{-\frac{2\beta}{2\beta+d}}.
\end{align*}

\noindent Next, we look at the case when $\rho_n\ge C_1(n_P^{\frac{2\beta+d}{2\beta+D}}+n_Q)^{-\frac{1}{2\beta+d}}$, choose $h_n:=C_4(n_P+n_Q\rho_n^{d-D})^{-\frac{1}{2\beta+D}}$. Once again, by splitting into two cases, namely $n_Q\lesssim n_P^{\frac{2\beta+d}{2\beta+D}}$ or $n_Q\gtrsim n_P^{\frac{2\beta+d}{2\beta+D}}$, it is easy to check that $\rho_n\vee h_n=\rho_n$. Next by splitting up further into two cases, namely $n_P\lesssim n_Q\rho_n^{d-D}$ or $n_P\gtrsim n_Q\rho_n^{d-D}$, we also observe that under the current parameter specifications, $h_n^{2\beta+D}\gtrsim (n_P+n_Q\rho_n^{d-D})^{-1}$ which implies 
\begin{align*}
    \E[(\hat{f}(\xs)-f^\star(\xs))^2] &\lesssim h_n^{2\beta}\lesssim (n_P+n_Q\rho_n^{d-D})^{-\frac{2\beta}{2\beta+d}}.
\end{align*}
This completes the proof.
\end{proof}

 \section{Proof of \cref{thm:minmaxlb}}
  Let $\cX$ denote the support of $P_X$. The proof proceeds with Le Cam's two-point argument (see \cite{LeCam1973} and \cite[Chapter 2]{Tsybakov2009}) for establishing minimax lower bounds. To wit, we consider the following two functions: 
\begin{align*}
    f_{0, n}(x) & \equiv 0 \ \ \text{ on } \cX \,,\\
    f_{1, n}(x) & = Mh_n^{\beta}K\left(\frac{x - \xs}{h_n}\right)\ \ \text{ on } \cX \,.
\end{align*}
Here the value of $h_n$ is taken as in \eqref{eq:def_h}, i.e., 
\begin{equation*}
    h_n = 
    \begin{cases}
        C(n_P + n_Q\rho_n^{d - D})^{-\frac{1}{2\beta + D}}, & \text{if } \rho_n > C'(n_P^{\frac{2\beta + d}{2\beta + D}} + n_Q)^{-\frac{1}{2\beta + d}}\\
        C(n_P^{\frac{2\beta + d}{2\beta + D}} + n_Q)^{-\frac{1}{2\beta + d}} & \text{if } \rho_n \le   C''(n_P^{\frac{2\beta + d}{2\beta + D}} + n_Q)^{-\frac{1}{2\beta + d}} \,,
    \end{cases}
\end{equation*}
for positive constants $C,C',C''$. Here the constant $M>0$ is chosen small enough and the kernel $K$ is chosen as a standard compactly supported $C^{\infty}$ ``bump" function, so as to satisfy \eqref{eq:kernel} and \cref{asn:regf}, i.e., $f_{1n}\in \Sigma(\beta,L)$. Following the same argument as used at the end of the proof of \cref{thm:approx_manifold_upper_bound}, this implies the following relation between $\rho_n$ and $h_n$ (by changing the constants if necessary), which we will use in the proof: 
\begin{align*}
    \rho_n \le   (n_P^{\frac{2\beta + d}{2\beta + D}} + n_Q)^{-\frac{1}{2\beta + d}} & \implies \rho_n \le h_n \\
    \rho_n > (n_P^{\frac{2\beta + d}{2\beta + D}} + n_Q)^{-\frac{1}{2\beta + d}} & \implies \rho_n > (n_P + n_Q\rho_n^{d - D})^{-\frac{1}{2\beta + D}} \implies h_n <  \rho_n \,.
\end{align*}
Now define $d(f, g) := |f(\xs) - g(\xs)|$ for functions $f,g$ on $\cX$. Then we have $d(f_{0, n}, f_{1, n}) = Lh_n^{\beta}K(0)$. 
Furthermore, set $r_n := Lh_n^{\beta}K(0)/3$. 
From the definition of the minimax risk, we have: 
\allowdisplaybreaks
\begin{align*}
    \inf_{\hat f} \sup_{f \in \Sigma(\beta, L)} \E\left[(\hat f(\xs) - f(\xs))^2\right] & \ge \inf_{\hat f} \max_{f \in \{f_{0, n}, f_{1, n}\}} \E\left[(\hat f(\xs) - f(\xs))^2\right] \\
    & \ge r_n^2 \inf_{\hat f} \max_{f \in \{f_{0, n}, f_{1, n}\}} \P\left(|\hat f(\xs) - f(\xs)| \ge r_n\right) \\
    & \ge r_n^2 \inf_{\psi} \max_{j \in \{0, 1\}} \P_{j}(\psi \neq j)
\end{align*}
In the last inequality, the infimum is over all testing functions which, based on observations drawn according to model \eqref{eq:model}, tests whether $f^\star(\xs) = f_{0, n}(\xs)=0$ or $f^\star(\xs) = f_{1, n}(\xs)=L h_n^{\beta}K(0)$. For clarity, $\P_{j}$ denotes the pooled data distribution under the Bayes optimal predictors $f_{0,n}$ and $f_{1,n}$ respectively. The above reduction scheme follows from the fact that by the definition of $r_n$, $d(f_{0, n}, f_{1, n}) > 2r_n$. Therefore, for any estimator $\hat f(\xs)$, we can define a projection based test $\psi$ as: 
$$
\psi = \argmin_{j \in \{0, 1\}} d(\hat f(\xs), f_{j, n}(\xs)) \,.
$$
Now observe that for any $j \in \{0, 1\}$, $j'\in\{0,1\}$, $j'\ne j$, the condition $\psi \neq j$ implies $|\hat f(\xs) - f_{j, n}(\xs)| > |\hat f(\xs)- f_{j', n}(\xs)|$. 
This further implies $|\hat f(\xs) - f_{j, n}(\xs)| > r_n$, because if not, then both $|\hat f(\xs)- f_{j, n}(\xs)| \le r_n$ and $|\hat f(\xs)- f_{j', n}(\xs)| \le r_n$ implies $d(f_{0, n}, f_{1, n}) \le 2r_n$ which yields a contradiction. Therefore, we have: 
$$
 \P_{j}\left(|\hat f(\xs) - f(\xs)| \ge r_n\right) \ge  \P_{j}(\psi \neq j)\,.
$$

This completes the proof of the reduction scheme. Now for any $\psi$: 
\begin{align}
\label{eq:lb_testing_1}
     \bbP_0(\psi \neq 0) +  \bbP_1(\psi \neq 1) & = 1 - \left(\ \bbP_0(\psi \neq 1) -  \bbP_1(\psi \neq 1)\right) \notag \\
    & \ge 1 - \TV( \bbP_0,  \bbP_1) \notag \\
    & \ge 1  - \sqrt{\frac12 \KL\left( \bbP_0 \mid  \bbP_1\right)} \,,
\end{align}
where the last inequality follows from Pinsker's inequality. Throughout the rest of the proof, the $\lesssim$ sign will be used to hide constants that are free of $n,L$, and $C$. We will choose the constants $L$ and $C$ appropriately at the end. 

\noindent Next, we upper bound the KL divergence. As we have $n_P$ observations from the source domain and $n_Q$ observations from the target domain, the KL divergence can be decomposed as: 
\begin{equation}
\label{eq:KL_decomp}
     \KL\left( \bbP_0 \mid  \bbP_1\right) = n_P\KL\left(P_{0} \mid P_{ 1}\right) + n_Q \KL\left(Q_{0} \mid Q_{1}\right)
\end{equation}
Let us first bound the KL divergence on the source domain: 
\begin{align}
\label{eq:source_KL_bound}
    \KL(P_{0} \mid P_{1}) & = \bbE_P\left[ \KL(P_{0}(Y \mid X) \mid P_{1}(Y \mid X))\right] \notag\\
    & = \bbE_P[(f_{0, n}(\xs) - f_{1, n}(\xs))^2] \notag \\
    & = \bbE_P[f^2_{1, n}(\xs)] \notag\\
    & = L^2 h_n^{2\beta} \bbE_P K^2\left(\frac{X-\xs}{h_n}\right) \lesssim \ L^2 h_n^{2\beta + D} \,,
\end{align}
where the last inequality follows from \cref{lem:techlem1}, part (iii). 
Now, we move on to the target domain. 
We will bound the corresponding KL divergence separately in two cases, depending on whether $\rho_n$ is large or smaller than the threshold $(n_P^{\frac{2\beta + d}{2\beta + D}} + n_Q)^{-\frac{1}{2\beta + d}}$.  
\\\\
\emph{Case 1: When $\rho_n \le C''(n_P^{\frac{2\beta + d}{2\beta + D}} + n_Q)^{-\frac{1}{2\beta + d}}$}.  \ As argued before, in this regime $\rho_n\le h_n$. Therefore,
\begin{align*}
    & \KL(Q_0\mid Q_1) = L^2 h_n^{2\beta}\bbE_Q K^2\left(\frac{X-\xs}{h_n}\right) \lesssim L^2 h_n^{2\beta+d}\,, 
\end{align*}
where the last inequality follows from \cref{lem:techlem1}, part (v).

\noindent \emph{Case 2: $\rho_n > C'(n_P^{\frac{2\beta + d}{2\beta + D}} + n_Q)^{-\frac{1}{2\beta + d}}$}. \ As argued before, in this setting $\rho_n\ge h_n$.  Therefore, 
\begin{align*}
    & \KL(Q_0\mid Q_1) = L^2 h_n^{2\beta}\bbE_Q K^2\left(\frac{X-\xs}{h_n}\right) \lesssim L^2 h_n^{2\beta+D}\rho_n^{d-D}\,, 
\end{align*}
where the last inequality follows from \cref{lem:techlem1}, part (iv). 
Combining our conclusions from Case 1 and Case 2, we conclude:
\begin{equation}
\label{eq:target_KL_bound}
\KL(Q_{0} \mid Q_{1})  \lesssim 
\begin{cases}
     L^2 h_n^{2\beta + d} & \text{if } \ \ \rho_n \le C''(n_P^{\frac{2\beta + d}{2\beta + D}} + n_Q)^{-\frac{1}{2\beta + d}} \\
     L^2 h_n^{2\beta + D}\rho_n^{d - D} & \text{if}\ \ \rho_n > C'(n_P^{\frac{2\beta + d}{2\beta + D}} + n_Q)^{-\frac{1}{2\beta + d}} \,.
\end{cases}
\end{equation}
Combining the bounds on equations  \eqref{eq:KL_decomp}, \eqref{eq:source_KL_bound}, and \eqref{eq:target_KL_bound} we conclude:   
\begin{equation}
    \KL(\bbP_0 \mid \bbP_1) \lesssim  
    \begin{cases}
     L^2 n_P h_n^{2\beta + D} +  L^2 n_Q h_n^{2\beta + d} & \text{if } \ \ \rho_n \le C''(n_P^{\frac{2\beta + d}{2\beta + D}} + n_Q)^{-\frac{1}{2\beta + d}} \\
     L^2 n_P h_n^{2\beta + D} + L^2 n_Q h_n^{2\beta + D}\rho_n^{d - D} & \text{if}\ \ \rho_n > C'(n_P^{\frac{2\beta + d}{2\beta + D}} + n_Q)^{-\frac{1}{2\beta + d}} \,.
\end{cases}
\end{equation}
Therefore, there exists constants $C_0$ and $\tilde{C}_0$ (independent of $L$), such that the following minimax lower bound holds: 
\begin{align*}
 & \inf_{\hat f} \sup_{f \in \Sigma(\beta, L)} \E\left[(\hat f(\xs) - f(\xs))^2\right]  \\
 & \ge 
 \begin{cases}
     \frac{r_n^2}{2}\left(1 - \sqrt{\frac12 C_0 L^2 n_P h_n^{2\beta + D} + C_0 L^2 n_Q h_n^{2\beta + d}}\right) & \text{if } \ \ \rho_n \le C''(n_P^{\frac{2\beta + d}{2\beta + D}} + n_Q)^{-\frac{1}{2\beta + d}} \\
     \frac{r_n^2}{2}\left(1 - \sqrt{\frac12  \tilde{C}_0 L^2 n_P h_n^{2\beta + D} + \tilde{C}_0 L^2 n_Q h_n^{2\beta + D}\rho_n^{d - D} }\right) & \text{if } \ \ \rho_n > C'(n_P^{\frac{2\beta + d}{2\beta + D}} + n_Q)^{-\frac{1}{2\beta + d}}
 \end{cases}
\end{align*}
Finally recall our choice of bandwidth $h_n$ from \eqref{eq:def_h}. If $h_n=C(n_P^{\frac{2\beta+d}{2\beta+D}}+n_Q)^{-\frac{1}{2\beta+d}}$, then $h_n\le C(n_P^{-\frac{1}{2\beta+D}}\wedge n_Q^{-\frac{1}{2\beta+d}})$. This implies $n_P h_n^{2\beta+D}+n_Q h_n^{2\beta+d}\le C^{2\beta+D}+C^{2\beta+d}$. On the other hand, if $h_n=C(n_P+n_Q\rho_n^{d-D})^{-\frac{1}{2\beta+D}}$, then we have $h_n\le C(n_P^{-\frac{1}{2\beta+D}}\wedge (n_Q \rho_n^{d-D})^{-\frac{1}{2\beta+D}})$. This again implies $n_P h_n^{2\beta+D}+n_Q h_n^{2\beta+D}\rho_n^{d-D}\le C^{2\beta+d}+C^{2\beta+D}$. Combining these observations, we have 
$$
 \inf_{\hat f} \sup_{f \in \Sigma(\beta, L)} \E\left[(\hat f(\xs) - f(\xs))^2\right]  \ge 
 \begin{cases}
     \frac{r_n^2}{2}\left(1 - \sqrt{\frac12 C_0 L^2(C^{2\beta + d}+C^{2\beta+D})}\right) & \text{if } \ \ \rho_n \le C''(n_P^{\frac{2\beta + d}{2\beta + D}} + n_Q)^{-\frac{1}{2\beta + d}}\\
     \frac{r_n^2}{2}\left(1 - \sqrt{\frac12 \tilde{C}_0 L^2 (C^{2\beta + d}+C^{2\beta+D})}\right) & \text{if } \ \ \rho_n > C'(n_P^{\frac{2\beta + d}{2\beta + D}} + n_Q)^{-\frac{1}{2\beta + d}}
 \end{cases}
$$
Finally, choosing the constant $L$ (depending on $C$, $\beta$, $d$, $D$, $C_0$) such that $ C_0 L^2(C^{2\beta + d}+C^{2\beta+D}) \le 1/2$ and $\tilde{C}_0 L^2(C^{2\beta + d}+C^{2\beta+D})\le 1/2$, we conclude the proof.

\section{Proof of \cref{thm:lepski_manifold}}\label{sec:lepskithm}
We begin with two technical Lemmas. The first one demonstrates a relationship between the order of the bias and the standard deviation for $\hat{f}$.
    \begin{lem}
    \label{lem:rate_discrepancy_Lepski}
    Suppose that 
    \begin{align}\label{eq:hnb}
    h_{n,\tilde{\beta}}:=\left(\frac{n_P^{\frac{2\tilde{\beta}+d}{2\tilde{\beta}+D}}+n_Q}{\log{n}}\right)^{-\frac{1}{2\tilde{\beta}+d}},
    \end{align}
    for some $\tilde{\beta}>0$, $D\ge 1$, and $1\le d\le D$. 
    Then the following conclusion holds:
    $$
    h_{n,\tilde{\beta}}^{2\tilde{\beta}} \times \left(n_P h_{n,\tilde{\beta}}^D + n_Q h_{n,\tilde{\beta}}^d\right) \ge \frac{\log{n}}{2^{\frac{2\tilde{\beta}+ D}{2\tilde{\beta} + d}}} \,.
    $$
\end{lem}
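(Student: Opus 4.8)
The plan is to reduce the statement to a clean algebraic inequality and then split into two regimes. First I would use the defining relation $h_{n,\tilde\beta}^{2\tilde\beta+d} = \log n / m_n$, where $m_n := n_P^{(2\tilde\beta+d)/(2\tilde\beta+D)} + n_Q$ is the effective sample size, to rewrite the left-hand side as
$$
h_{n,\tilde\beta}^{2\tilde\beta}\bigl(n_P h_{n,\tilde\beta}^D + n_Q h_{n,\tilde\beta}^d\bigr) = n_P h_{n,\tilde\beta}^{2\tilde\beta+D} + n_Q h_{n,\tilde\beta}^{2\tilde\beta+d},
$$
in which the second summand equals exactly $n_Q\log n / m_n$. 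Since both summands are non-negative, it suffices to lower bound whichever one is convenient. Writing $r := (2\tilde\beta+D)/(2\tilde\beta+d) \ge 1$ (using $D\ge d$), the target bound is $\log n / 2^{r}$.

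Next I would split on which term dominates $m_n$, via $m_n \le 2\max\{n_P^{1/r},\, n_Q\}$. In the regime $n_Q \ge n_P^{1/r}$ we have $m_n \le 2n_Q$, so the second summand satisfies $n_Q h_{n,\tilde\beta}^{2\tilde\beta+d} = n_Q\log n / m_n \ge (\log n)/2 \ge (\log n)/2^{r}$, since $r\ge 1$. In the complementary regime $n_Q < n_P^{1/r}$ we have $m_n \le 2n_P^{1/r}$, hence $h_{n,\tilde\beta}^{2\tilde\beta+D} = (\log n/m_n)^{r} \ge (\log n)^{r}/\bigl(2^{r} (n_P^{1/r})^{r}\bigr) = (\log n)^{r}/(2^{r} n_P)$; multiplying by $n_P$ cancels the $n_P$ and leaves $n_P h_{n,\tilde\beta}^{2\tilde\beta+D} \ge (\log n)^{r}/2^{r} \ge (\log n)/2^{r}$, the last step using $r\ge 1$ and $\log n \ge 1$. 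In either case one of the two summands already exceeds $\log n/2^{r}$, which is the claim.

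There is no real analytic obstacle here; the only points requiring care are the exponent bookkeeping — that $n_P^{(2\tilde\beta+d)/(2\tilde\beta+D)}$ raised to the power $r$ is exactly $n_P$ — and the harmless implicit assumption that $n$ is large enough that $\log n \ge 1$, so that $(\log n)^{r}\ge \log n$. I would also note that the degenerate endpoints $n_P=0$ and $n_Q=0$ fall directly under the first and second regime respectively (with the factor $2$ not even needed), so no separate treatment is required.
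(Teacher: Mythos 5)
Your proof is correct and follows essentially the same route as the paper's: the same identity $h_{n,\tilde\beta}^{2\tilde\beta+d}=\log n/m_n$, the same split into the regimes $n_Q\ge n_P^{1/r}$ and $n_Q<n_P^{1/r}$, and the same algebra lower-bounding the dominant summand in each case (with your $r$, $m_n$ being the paper's $A$, $S$). You also correctly flag the implicit $\log n\ge 1$ requirement, which the paper uses silently.
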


The next result proves an exponential concentration for $\hat{f}$. 
\begin{lem}[Exponential concentration]
\label{lem:stochastic_conc}
We assume that $\rho_n\ll \hns$ and the $Y_i$s are uniformly bounded. Also suppose that Assumptions \ref{asn:covshift} --- \ref{asn:regf}, and \eqref{eq:kernel} hold. Recall the definitions of $\Omega_n$ and $\psi_n$ from \eqref{eq:gev} and \eqref{eq:psin} respectively. Let $h_n\to 0$ be a sequence of bandwidths such that $n\psi_n h_n^D=n_p h_n^D + n_Q h_n^d\to \infty$.  Then there exists $T_0, c_1, c_2, \upsilon,C>0$ such that for all $t>T_0$, we have:
\begin{align*}
        & \bbP\left(\left|\hat f(\xs) - \bbE[\hat f(\xs)]\right| > t,\Omega_n\right)  \le C\exp\left(-(n\psi_n h_n^D)\frac{(t/2\upsilon)^2}{c_1 + c_2 (t/2\upsilon)}\right)\,.
    \end{align*}
\end{lem}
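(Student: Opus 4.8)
The plan is to reduce the concentration of $\hat f(\xs)$ to a Bernstein-type bound for a normalized sum of independent bounded random variables, after peeling off two lower-order pieces: the nonlinearity coming from the matrix inverse $(\hat S_n/\psi_n)^{-1}$, and the truncation/centering corrections that are only active off the event $\Omega_n$. Since the stated hypothesis is $n\psi_n h_n^D = n_Ph_n^D + n_Qh_n^d$, we are in the regime $\rho_n\le h_n$, so $\psi_n = (n_P + n_Qh_n^{d-D})/n$ and the truncation level $\tau_n$ is only polynomially small. On $\Omega_n$ (see \eqref{eq:gev}) and for $n$ large, the truncation is inactive, so $\hat f(\xs) = \hat f^{\rm LPR}(\xs) = e_1^\top(\hat S_n/\psi_n)^{-1}(\hat s_n/\psi_n)$; with $a_n := S_n^{-1}e_1$ (so $\|a_n\|\le 1/m$ by Lemma \ref{lem:lbdeig}) I would write $\hat f(\xs)\mathds{1}_{\Omega_n} = U\mathds{1}_{\Omega_n} + R\mathds{1}_{\Omega_n}$, where $U := a_n^\top(\hat s_n/\psi_n)$ is the linear part and $R := e_1^\top[(\hat S_n/\psi_n)^{-1} - S_n^{-1}](\hat s_n/\psi_n)$ the remainder.

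\emph{Linear part.} Writing $U = (n\psi_n)^{-1}\sum_i\xi_i$ with $\xi_i = a_n^\top\bz((X_i-\xs)/h_n)K_{h_n}(X_i-\xs)Y_i$, boundedness of $K$ on $[-1,1]^D$ (see \eqref{eq:kernel}), of $\bz$ on that support, of $a_n$, and of the $Y_i$, gives that each summand of $U$ is at most $M\asymp (n\psi_nh_n^D)^{-1}$ in absolute value. For the variance, $\Var(U)\le (n\psi_n)^{-2}\sum_i\E[\xi_i^2]$, and applying Lemma \ref{lem:techlem1}(iii) to the source terms and Lemma \ref{lem:techlem1}(v) to the target terms yields $\sum_i\E[\xi_i^2]\lesssim n_Ph_n^{-D} + n_Qh_n^{d-2D}$, hence $\Var(U)\lesssim (n\psi_nh_n^D)^{-1}$ after using $n\psi_n = n_P + n_Qh_n^{d-D}$. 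Bernstein's inequality then delivers $\P(|U-\E U|>s)\le 2\exp(-(n\psi_nh_n^D)\,\tfrac{s^2/2}{c(1+s)})$ for all $s>0$, which is already of the desired shape.

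\emph{Remainder and corrections.} On $\Omega_n$, \eqref{eq:invbound} gives $|R|\lesssim \|\hat S_n/\psi_n - S_n\|\,\|\hat s_n/\psi_n\|$, and $\|\hat s_n/\psi_n\|\le \|s_n\| + \|\hat s_n/\psi_n - s_n\|$ with $\|s_n\|\lesssim 1$ (Lemma \ref{lem:techlem1}(iii)–(v) with $\upsilon=1$, after conditioning on $X$). Now $\|\hat S_n/\psi_n - S_n\|$ has a Bernstein tail with scale $(n\psi_nh_n^D)^{-1/2}$ by Lemma \ref{lem:conc_matrix}, and $\|\hat s_n/\psi_n - s_n\|$ has a tail of the same type by the coordinatewise version of the Bernstein estimate above; choosing constants so that both being $\le c_3t$ forces $|R|\le t/4$, the event $\{|R|>t/4\}\cap\Omega_n$ is contained in $\{\|\hat S_n/\psi_n - S_n\|>c_3t\}\cup\{\|\hat s_n/\psi_n - s_n\|>c_3t\}$, whose probability is again bounded by an expression of the desired form. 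Finally, I would show $|\E\hat f(\xs) - \E U|$ is exponentially small — the truncation correction is at most $\tau_n^{-1}\sqrt{\P(\Omega_n^c)}$ (using $|\hat f^{\rm LPR}|\le \tau_n^{-1}\|\hat s_n/\psi_n\|$ on the truncation set, Cauchy–Schwarz, Lemma \ref{lem:bounding_mean_part}, and the exponential bound on $\P(\Omega_n^c)$ from Lemma \ref{lem:conc_matrix}), and $\E[U\mathds{1}_{\Omega_n^c}]$, $\E[R\mathds{1}_{\Omega_n}]$ are likewise $o(1)$ — so that for $t>T_0$ one has $|\E\hat f(\xs) - \E U|\le t/4$. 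Combining, on $\Omega_n$ the event $\{|\hat f(\xs) - \E\hat f(\xs)|>t\}$ forces $|U-\E U|>t/4$ or $|R|>t/4$, and a union bound of the two tails above gives the claim with $\upsilon$ absorbing all constants.

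\emph{Main obstacle.} The one delicate point is the nonlinear remainder $R$ together with the truncation/centering bookkeeping: one must confirm that replacing $(\hat S_n/\psi_n)^{-1}$ by the deterministic $S_n^{-1}$, and passing between $\E\hat f(\xs)$ and $\E U$, only contributes tails no worse than the Bernstein rate for the linear part. This is exactly where Lemmas \ref{lem:lbdeig}, \ref{lem:conc_matrix} and \ref{lem:bounding_mean_part} do the work; everything else is a routine Bernstein computation once the variance bound is read off from Lemma \ref{lem:techlem1}.
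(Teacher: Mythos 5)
Your decomposition is a legitimate alternative to the paper's, but the argument for the remainder $R$ has a real gap. You write $\hat f(\xs)\mathds{1}_{\Omega_n} = U\mathds{1}_{\Omega_n} + R\mathds{1}_{\Omega_n}$ with $U = e_1^\top S_n^{-1}(\hat s_n/\psi_n)$ and $R = e_1^\top\bigl[(\hat S_n/\psi_n)^{-1} - S_n^{-1}\bigr](\hat s_n/\psi_n)$, so that $R$ is a product of two random factors each having a Bernstein tail. You then claim that the event $\{|R|>t/4\}\cap\Omega_n$ is contained in $\{\|\hat S_n/\psi_n - S_n\|>c_3t\}\cup\{\|\hat s_n/\psi_n - s_n\|>c_3t\}$ for a constant $c_3$. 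This containment fails: on the complement of that union you only get, via \eqref{eq:invbound},
\[
|R|\;\lesssim\;\|\hat S_n/\psi_n - S_n\|\,\bigl(\|s_n\| + \|\hat s_n/\psi_n - s_n\|\bigr)\;\lesssim\;c_3t\bigl(C + c_3t\bigr),
\]
which is $O(t^2)$ and cannot be below $t/4$ uniformly over $t>T_0$; the argument only closes for $t$ in a bounded range, whereas the lemma requires all $t>T_0$. The easy repair — which you did not state — is to use that $\Omega_n$ already caps the matrix deviation: on $\Omega_n$ one has $\|\hat S_n/\psi_n - S_n\|\le\lambda_{\min}(S_n)/2 \lesssim 1$, so $|R|\mathds{1}_{\Omega_n}\lesssim\|\hat s_n/\psi_n\|$, and then for $t>T_0$ large the event $\{|R|>t/4\}\cap\Omega_n$ is contained in $\{\|\hat s_n/\psi_n - s_n\|>ct\}$, which has the desired tail. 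Either that, or further split $R$ into a piece with $s_n$ (bounded) and a piece with $\hat s_n/\psi_n - s_n$, so that on $\Omega_n$ each piece is one exponential-tail random quantity times a bounded factor.

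For comparison, the paper avoids the product issue by a different chaining: it compares $\hat f(\xs)$ to the deterministic $e_1^\top S_n^{-1}s_n$ and introduces the intermediate $e_1^\top(\hat S_n/\psi_n)^{-1}s_n$, producing two terms $e_1^\top(\hat S_n/\psi_n)^{-1}(\hat s_n/\psi_n - s_n)$ and $e_1^\top[(\hat S_n/\psi_n)^{-1} - S_n^{-1}]s_n$; each is a single random factor (respectively $\|\hat s_n/\psi_n - s_n\|$ via Lemma~\ref{lem:exp_conc_sn} and $\|\hat S_n/\psi_n - S_n\|$ via Lemma~\ref{lem:conc_matrix}) multiplied by a quantity that is deterministically bounded on $\Omega_n$. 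The centering difference $|\E\hat f(\xs) - e_1^\top S_n^{-1}s_n|$ is absorbed simply by choosing $T_0$ larger than twice its supremum — you don't need the stronger (and unnecessary) claim that it is exponentially small. Your linear-part Bernstein computation and the identification of the regime $\rho_n\le h_n$ are correct; the only substantive issue is the remainder bound.
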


    \begin{proof}[Proof of \cref{thm:lepski_manifold}]
    The basic structure of the proof is borrowed from the standard proof of Lepski's adaptation argument (see e.g.~\cite{hutter2017notes}). Recall that we first consider a set of smoothness parameters: 
    $$
    \cB = \{0 \equiv \beta_0 < \beta_{\min} \equiv \tilde{\beta}_{1} < \tilde{\beta}_2 < \dots < \tilde{\beta}_N \equiv \beta_{\max}\}\,,
    $$
    where $\tilde{\beta}_{j}-\tilde{\beta}_{j-1}\asymp 1/\log{n}$. In view of \eqref{eq:hnb}, 
    given any $1\le j\le N$, we define 
    \begin{align}
    \label{eq:bandwidth_choice_Lepski}
    h_{n,j} := \left(\frac{n_P^{\frac{2\tilde{\beta}_j + d}{2\tilde{\beta}_j + D}} + n_Q}{\log{n}}\right)^{-\frac{1}{2\tilde{\beta}_j + d}} \,,
    \end{align}
    and $\delta_{n, j} := h_{n,j}^{\tilde{\beta}_j}$. It is easy to check that provided $n_P,n_Q>1$, $\delta_{n,j}\le \delta_{n,j-1}$ for $1\le j\le N$. 
    
    \noindent We begin by noting that it suffices to assume that $\beta\in\mathcal{B}$. If not, suppose $\beta\in (\tilde{\beta}_{j-1},\tilde{\beta}_j)$, for $1\le j\le N$. As $f^\star$ is $\beta$-H\"{o}lder locally at $\xs$, it is $\tilde{\beta}_{j-1}$-H\"{o}lder continuous around $\xs$. It suffices to show that $\delta_{n,j-1}/\delta_{n,j}\lesssim 1$. To wit, we first observe that 
    \begin{align*}
        \log\frac{\delta_{n,j-1}}{\delta_{n,j}}=\left(\frac{\tilde{\beta}_{j}}{2\tilde{\beta}_j+d}-\frac{\tilde{\beta}_{j-1}}{2\tilde{\beta}_{j-1}+d}\right)\log{\left(\frac{n_P^{\frac{2\tilde{\beta}_{j-1} + d}{2\tilde{\beta}_{j-1} + D}} + n_Q}{\log{n}}\right)}+\frac{\tilde{\beta}_j}{2\tilde{\beta}_j+d}\log{\left(\frac{n_P^\frac{2\tilde{\beta}_j+d}{2\tilde{\beta}_j+D}+n_Q}{n_P^\frac{2\tilde{\beta}_{j-1}+d}{2\tilde{\beta}_{j-1}+D}+n_Q}\right)}.
    \end{align*}
    As 
    $$\frac{\tilde{\beta}_j}{2\tilde{\beta}_j+d}-\frac{\tilde{\beta}_{j-1}}{2\tilde{\beta}_{j-1}+d} \lesssim \tilde{\beta}_j-\tilde{\beta}_{j-1} \lesssim \frac{1}{\log{n}},$$
    and 
    $$\log{\left(n_P^{\frac{2\tilde{\beta}_j+d}{2\tilde{\beta}_j+D}-\frac{2\tilde{\beta}_{j-1}+d}{2\tilde{\beta}_{j-1}+D}}\right)}\lesssim (\tilde{\beta}_j-\tilde{\beta}_{j-1})\log{n_P} \lesssim 1,$$
    we have the desired conclusion $\delta_{n,j-1}/\delta_{n,j}\lesssim 1$. 

    \vspace{0.1in}
    
    Based on the above argument, it suffices to assume that the true smoothness parameter $\beta$ actually lies in $\cB$, i.e., $\beta = \tilde{\beta}_{i^\star}$, for some $1\le i^\star\le N$, which is obviously unknown to the statistician.  Furthermore, we denote by $\hat f_j$ the estimator with $h_n \equiv h_{n,j}$. With the current notation, recall that our smoothness parameter $\hat \beta$ is estimated as follows: 
    $$
    \hat \beta = \max\{\tilde{\beta}_j: |\hat f_j(\xs) - \hat f_i(\xs)| \le C_\ell \delta_{n, i} \ \ \forall \ \ 1 \le i \le j-1\} \,.
    $$
    where $C_\ell>0$ is chosen large enough (to be specified later). 
    Finally, we have set $\hat f_{\rm adp}(\xs) = \hat f_{h_{n,\hat{\beta}}}(\xs)$. 
    For the subsequent analysis, we define the event $\cE_j := \{\hat \beta = \tilde{\beta}_j\}$. We have: 
    \begin{align}
    \label{eq:lepski_break_1}
        \bbE\left[\frac{|\hat f(\xs) - f^\star(\xs)|}{\delta_{n, i^\star}}\right] & = \sum_{j = 1}^N  \bbE\left[\frac{|\hat f_j(\xs) - f^\star(\xs)|}{\delta_{n, i^*}}\mathds{1}_{\cE_j}\right] \notag \\
        & = \sum_{j = 1}^{i^* - 1}\bbE\left[\frac{|\hat f_j(\xs) - f^\star(\xs)|}{\delta_{n, i^*}}\mathds{1}_{\cE_j}\right] + \sum_{j = i^*}^N \bbE\left[\frac{|\hat f_j(\xs) - f^\star(\xs)|}{\delta_{n, i^*}}\mathds{1}_{\cE_j}\right] 
    \end{align}
Now, for any $j > i^*$, by definition of the event $\cE_j$, we have: 
$$
|\hat f_j(\xs) - \hat f_{i^*}(\xs)| \le c_0 \delta_{n, i^*} \,.
$$

Therefore, 
\begin{align*}
    \sum_{j = i^*}^N \bbE\left[\frac{|\hat f_j(\xs) - f^\star(\xs)|}{\delta_{n, i^*}}\mathds{1}_{\cE_j}\right]  & \le \sum_{j = i^*}^N \bbE\left[\frac{|\hat f_j(\xs) - \hat f_{i^*}(\xs)|}{\delta_{n, i^*}}\mathds{1}_{\cE_j}\right]  + \sum_{j = i^*}^N \bbE\left[\frac{|\hat f_{i^*}(\xs) - f^\star(\xs)|}{\delta_{n, i^*}}\mathds{1}_{\cE_j}\right] \\ & \le C_\ell + \sum_{j = i^*}^N \bbE\left[\frac{|\hat f_{i^*}(\xs) - f^\star(\xs)|}{\delta_{n, i^*}}\mathds{1}_{\cE_j}\right]\lesssim 1,
\end{align*} 
where the last bound follows from \eqref{eq:genbound}.

\noindent
Now, we bound the first part, i.e. when $j < i^*$. We will begin with a bound for $\P(\cE_j)$. First observe that, for such a $j$: 
\begin{align*}
\cE_j & \subseteq \cup_{i = 1}^{i^* 
- 1} \left\{\left|\hat f_{i^*}(\xs) - \hat f_i(\xs)\right| > C_\ell \delta_{n, i}\right\} \\
& \subseteq \cup_{i = 1}^{i^* 
- 1} \left[ \left\{\left|\hat f_{i^*}(\xs) - f^\star(\xs)\right| > C_\ell \delta_{n, i}/2\right\} \cup \left\{\left| f^\star(\xs) - \hat f_i(\xs)\right| > C_\ell \delta_{n, i}/2\right\} \right]\\
& \subseteq \cup_{i = 1}^{i^* 
- 1} \left[\left\{\left|\hat f_{i^*}(\xs) - f^\star(\xs)\right| > C_\ell \delta_{n, i^*}/2\right\} \cup \left\{\left| f^\star(\xs) - \hat f_i(\xs)\right| > C_\ell \delta_{n, i}/2\right\}\right] \,.
 \end{align*}
Here the last inclusion follows from the fact that $\delta_{n, i} > \delta_{n, i^*}$ for $i \le i^*$. 
Next, recall the definition of $\Omega_n$ in \eqref{eq:gev}. 
Now, for the first part, we use the tail bound established in Lemma \ref{lem:stochastic_conc}. First of all, by \eqref{eq:biasgenbd} and \eqref{eq:meano1}, we have: 
$$
\left|\bbE[\hat f_{i}(\xs)] - f^\star(\xs)\right| \lesssim \delta_{n, i}
$$
for any $1 \le i \le i^*$, which follows from the observation that if a function is $\tilde{\beta}_{i^\star}$-smooth, then it is also $\tilde{\beta}_i$-smooth, for all $1 \le i \le i^*$. 
Therefore, if $C_\ell > 0$ is large enough, we have: 
\begin{align*}
    \bbP\left(\left| f^\star(\xs) - \hat f_i(\xs)\right| > C_\ell \delta_{n, i}/2\right) & \le \bbP\left(\left| \bbE[\hat f_i(\xs)] - \hat f_i(\xs)\right| > C_\ell \delta_{n, i}/4\right) \\
    & \le \bbP\left(\left| \bbE[\hat f_i(\xs)] - \hat f_i(\xs)\right| > C_\ell \delta_{n, i}/4, \Omega_n\right) + \bbP(\Omega_n^c) \\
    & \le C \exp\left(-\gamma_n(h_{n,i})\frac{(C_\ell \delta_{n, i}/4)^2}{c_1 + c_2 (C_\ell \delta_{n, i}/4)}\right) + C \exp\left(-c\gamma_n(h_{n,i})\right) =: \zeta_i\,,
\end{align*}
where $\gamma_n(h_{n,i}) := n_P h_{n,i}^D + n_Q h_{n,i}^d = n\psi_n h_n^D$. This implies: 
$$
\bbP(\cE_j) \le \sum_{i = 1}^{i^*- 1} (\zeta_{i^*} + \zeta_i) \lesssim \zeta_{i^*}\log{n}  +  \sum_{i = 1}^{i^*- 1} \zeta_i \,.
$$
From Lemma \ref{lem:rate_discrepancy_Lepski}, we know that $\delta_{n, i}^2 \gamma_n(h_i) \gtrsim \log{n}$ and by definition $\delta_{n, i} \downarrow 0$ as $n \uparrow \infty$. Therefore, it is immediate that 
\begin{align}\label{eq:ejtail}
\bbP(\cE_j) \le Cn^{-C_0}
\end{align}
for some constant $C_0$. Furthermore, we can make $C_0$ larger by choosing $C_\ell$ larger. 
Now, going back to the first summand of  \eqref{eq:lepski_break_1}, we have: 
\begin{align*}
&\;\;\;\sum_{j = 1}^{i^* - 1}\bbE\left[\frac{|\hat f_j(\xs) - f^\star(\xs)|}{\delta_{n, i^*}}\mathds{1}_{\cE_j}\right] \\ & = \sum_{j = 1}^{i^* - 1}\bbE\left[\frac{|\hat f_j(\xs) - f^\star(\xs)|}{\delta_{n, i^*}}\mathds{1}_{\cE_j \cap \Omega_n}\right]    + \sum_{j = 1}^{i^* - 1}\bbE\left[\frac{|\hat f_j(\xs) - f^\star(\xs)|}{\delta_{n, i^*}}\mathds{1}_{\cE_j \cap \Omega_n^c}\right] \\
& \le \sum_{j = 1}^{i^* - 1}\bbE\left[\frac{|\hat f_j(\xs) - \bbE[\hat f_j(\xs)]|}{\delta_{n, i^*}}\mathds{1}_{\cE_j \cap \Omega_n}\right]    + \sum_{j = 1}^{i^* - 1}\bbE\left[\frac{|\hat f_j(\xs) - \bbE[\hat f_j(\xs)]|}{\delta_{n, i^*}}\mathds{1}_{\cE_j \cap \Omega_n^c}\right]  \\
& \qquad \qquad + \sum_{j = 1}^{i^* - 1}\bbE\left[\frac{|\bbE[\hat f_j(\xs)] - f^\star(\xs)|}{\delta_{n, i^*}}\mathds{1}_{\cE_j \cap \Omega_n}\right]    + \sum_{j = 1}^{i^* - 1}\bbE\left[\frac{|\bbE[\hat f_j(\xs)] - f^\star(\xs)|}{\delta_{n, i^*}}\mathds{1}_{\cE_j \cap \Omega_n^c}\right] \\
& \lesssim \sum_{j = 1}^{i^* - 1}\bbE\left[\frac{|\hat f_j(\xs) - \bbE[\hat f_j(\xs)]|}{\delta_{n, i^*}}\mathds{1}_{\cE_j \cap \Omega_n}\right]    + \sum_{j = 1}^{i^* - 1}\bbE\left[\frac{|\hat f_j(\xs) - \bbE[\hat f_j(\xs)]|}{\delta_{n, i^*}}\mathds{1}_{\cE_j \cap \Omega_n^c}\right] + \sum_{j = 1}^{i^* - 1} \frac{\delta_{n, j}}{\delta_{n, i^*}}\bbP(\cE_j)  \\
& =: \Upsilon_1 + \Upsilon_2 + \sum_{j = 1}^{i^* - 1} \frac{\delta_{n, j}}{\delta_{n, i^*}}\bbP(\cE_j) \,.
\end{align*}
Let us first tackle the third summand. As we have we have already established an upper bound on $\bbP(\cE_j)$, we have: 
\begin{align}\label{eq:prelimzero}
    \sum_{j = 1}^{i^* - 1} \frac{\delta_{n, j}}{\delta_{n, i^*}}\bbP(\cE_j) \le Cn^{-C_0} \sum_{j = 1}^{i^* - 1} \frac{\delta_{n, j}}{\delta_{n, i^*}} \le Cn^{-C_0} \ \log{n} \ \frac{\delta_{n, 1}}{\delta_{n, i^*}} \overset{n \uparrow \infty}{\longrightarrow} 0 \,,
\end{align}
where the last conclusion follows from the fact that  $\delta_{n,1}/\delta_{n,i^*}$ is a polynomial in $n$ and that power can be dominated by $C_0$ by choosing this suitably large (i.e., choosing $C_\ell$ suitably large).

\vspace{0.1in}
\noindent We now move on to $\Upsilon_1$. Set $t_j:=C \delta_{n,j}/\delta_{n,i^\star}$ for some constant $C>0$ which is large enough. We will change the constants $C,c_1,c_2>0$ from line to line. Note that by using \cref{lem:stochastic_conc}, we have:
\begin{align*}
    \Upsilon_1 &= \sum_{j = 1}^{i^* - 1} \int_0^\infty \bbP\left(|\hat f_j(\xs) - \bbE[\hat f_j(\xs)]| > t\delta_{n, i^*}, \ \cE_j \cap \Omega_n\right)\ dt \\
    & \le \sum_{j = 1}^{i^* - 1} \left\{t_j \bbP(\cE_j) + \int_{t_j}^\infty \bbP\left(|\hat f_j(\xs) - \bbE[\hat f_j(\xs)]| > t\delta_{n, i^*}, \ \Omega_n\right) \ dt\right\} \\ 
    & \le \sum_{j = 1}^{i^* - 1} \left\{t_j \bbP(\cE_j) + C\int_{t_j}^\infty \exp\left(-\gamma_n(h_{n,j}) \frac{(t\delta_{n, i^*})^2}{c_1 + c_2 (t\delta_{n, i^*})}\right)  \ dt\right\} \\ 
    & = \sum_{j = 1}^{i^* - 1} \left\{t_j \bbP(\cE_j) + \frac{C}{\delta_{n, i^*}}\int_{t_j \delta_{n, i^*}}^\infty \exp\left(-\gamma_n(h_{n,j}) \frac{t^2}{c_1 + c_2 t}\right)  \ dt\right\} \\ 
    & = \sum_{j = 1}^{i^* - 1} \left\{t_j \bbP(\cE_j) + \frac{C}{\delta_{n, i^*}}\int_{t_j \delta_{n, i^*}}^\infty \exp\left(-\frac{(t \sqrt{\gamma_{n}(h_{n,j})})^2}{c_1 + c_2 t}\right)  \ dt\right\} \\
    & = \sum_{j = 1}^{i^* - 1} \left\{t_j \bbP(\cE_j) + \frac{C}{\delta_{n, i^*}\sqrt{\gamma_{n}(h_{n,j})}}\int_{t_j \delta_{n, i^*}\sqrt{\gamma_{n}(h_{n,j})}}^\infty \exp\left(-\frac{t^2}{c_1 + c_2 \frac{t}{\sqrt{\gamma_{n}(h_{n,j})}}}\right)  \ dt\right\} \\
    & \le \sum_{j = 1}^{i^* - 1} \left\{t_j \bbP(\cE_j) + \frac{C}{\delta_{n, i^*}\sqrt{\gamma_{n}(h_{n,j})}}\int_{t_j \delta_{n, i^*}\sqrt{\gamma_{n}(h_{n,j})}}^\infty \exp\left(-\frac12 \min\left\{\frac{t^2}{c_1}, \frac{t\sqrt{\gamma_{n}(h_{n,j})}}{c_2}\right\}\right)  \ dt\right\} 
\end{align*}
As $t_j\delta_{n,i^{\star}}\sqrt{\gamma_n(h_{n,j})}\ge C \sqrt{\log{n}}$ for some large constant $C$, we have: 
 $$\max_{j<i^\star} \int_{t_j \delta_{n, i^*}\sqrt{\gamma_{n}(h_{n,j})}}^\infty \exp\left(-\frac12 \min\left\{\frac{t^2}{c_1}, \frac{t\sqrt{\gamma_{n}(h_{n,j})}}{c_2}\right\}\right)  \ dt \lesssim C n^{-C_0}$$
 for some large constant $C_0>0$. Also note that $(\delta_{n,i^\star}\sqrt{\gamma_n(h_{n,j})})^{-1}$ grows atmost polynomially in $n$. Therefore, 
 $$\sum_{j = 1}^{i^* - 1} \frac{1}{\delta_{n, i^*}\sqrt{\gamma_{n}(h_{n,j})}}\int_{t_j \delta_{n, i^*}\sqrt{\gamma_{n}(h_{n,j})}}^\infty \exp\left(-\frac12 \min\left\{\frac{t^2}{c_1}, \frac{t\sqrt{\gamma_{n}(h_{n,j})}}{c_2}\right\}\right)  \ dt \to 0.$$
 Finally for the first summand, then we have: 
\begin{align*}
    \sum_{j = 1}^{i^* - 1} t_j \bbP(\cE_j) & = C \sum_{j = 1}^{i^* - 1} \frac{\delta_{n, j}}{\delta_{n, i^*}} \bbP(\cE_j) 
\end{align*}
which is already shown to converge to $0$ as $n \uparrow \infty$, in \eqref{eq:prelimzero}. This shows that $\Upsilon_1\to 0$.

\vspace{0.1in}
\noindent We move on to the $\Upsilon_2$ term. By \eqref{eq:meano1}, we have:
$$ \bbE\left[\big|\hat{f}_j(\xs)-\bbE[\hat{f}_j(\xs)]\big|\mathds{1}(\Omega_n^c)\right] \lesssim (n_P h_{n,j}^D+n_Q h_{n,j}^d)^{-p} \exp\left(-\frac{m^2(n_P h_{n,j}^D+n_Q h_{n,j}^d)}{c_1+c_2 m}\right)\lesssim n^{-C_0},$$ 
for some large constant $C_0$. Once again as $\delta_{n,i^\star}^{-1}$ grows atmost polynomially in $n$, we have 
$$\frac{1}{\delta_{n,i^\star}}\sum_{j=1}^{i^\star-1}\bbE\left[\big|\hat{f}_j(\xs)-\bbE[\hat{f}_j(\xs)]\big|\mathds{1}(\Omega_n^c)\right] \lesssim \frac{\log{n}}{\delta_{n,i^\star}}n^{-C_0}\to 0.
$$
Therefore, $\Upsilon_2\to 0$. This completes the proof.
 
\end{proof}

 \section{Proofs of Lemmas from \cref{sec:pfmainres}}\label{sec:pfauxlem}

 \begin{proof}[Proof of \cref{lem:techlem1}]
     (i). We first claim that that Lebesque measure of the set $\cnu$ remains uniformly upper bounded with respect to $n$ and $u\in [-1,1]^D$.  Towards that end, first, observe that there exists $\lambda\in (0,1)$ such that  
\begin{align}\label{eq:uub1}
\left\|\frac{\phi(0)  - \phi(s \rho_n)}{\rho_n} + u_0 + \frac{h_n}{\rho_n} u\right\|_\infty \le 1 & \iff \left\|\nabla \phi(\lambda \rho_n s) s + u_0 + \frac{h_n}{\rho_n} u\right\|_\infty \le 1 \nonumber \\
& \implies \left\|\nabla \phi(\lambda \rho_n s) s\right\|_\infty \le 3 \nonumber \\
& \implies \left\|\nabla \phi(\lambda \rho_n s) s\right\|_2 \le 3\sqrt{D} \nonumber \\
& \implies \sqrt{s^\top \nabla \phi(\lambda \rho_n s)^\top \nabla \phi(\lambda \rho_n s) s} \le 3\sqrt{D} \nonumber \\
& \implies \|s\|_2 \le 3\sqrt{Dc} \,,
\end{align}
where the last line leverages the lower bound from \eqref{eq:manifold_curvature}. Hence $\cnu \subseteq B(0; 3\sqrt{Dc})$, which implies $\cnu$ is uniformly compact in both $n$ and $u$.

It remains to show that the Lebesgue measure of the set $\nu$ is uniformly lower bounded in $n$ and $u$, which we establish next. 
For notational simplicity define $b_{n,u} := u_0 + (h_n/\rho_n) u \in [-1,1]^d$.
We can rewrite $\cnu$ as: 
$$
\left\|\frac{\phi(0) - \phi(s \rho_n)}{\rho_n} + u_0 + \frac{h_n}{\rho_n} u\right\|_\infty \le 1 \quad  \iff \quad \left\|\nabla \phi(0) s + b_{n,u} + r_n\right\|_\infty \le 1 
$$
where $\|r_n\|_\infty = O(\rho_n)$ is the remainder term. Define the set 
$$
(\cnu)' := \left\{s: \left\|\nabla \phi(0) s + b_{n,u}\right\|_\infty \le 1/2\right\}.
$$
Since $\rho_n\to 0$ as $n\to\infty$, we have $\cnu \supseteq (\cnu)'$. Now, as per \eqref{eq:manifold_curvature}, $\nabla \phi(0)$ has full column rank. Therefore, there exists $b'_{n,u} \in \R^d$ such that $\nabla \phi(0) b'_{n,u} = b_{n,u}$. As a result, we have: 
$$
(\cnu)' = \left\{s: \left\|\nabla \phi(0) (s + b'_{n,u})\right\|_\infty \le c\right\} \,.
$$
As a consequence: 
$$
\rm{Leb}((\cnu)') = \rm{Leb}\{y: \left\|\nabla \phi(0)y\right\|_\infty \le c\}
$$
which is bounded from below as $\nabla \phi(0)$ has full column rank by Assumption \ref{asn:lowman}(1). This completes the proof. 

\vspace{0.05in}

\noindent (ii). The proof is very similar to part (i). So we omit the details.

\vspace{0.05in}

\noindent (iii). Note that 
\begin{align*}
    &\;\;\;\;\bbE_P \left[\Theta\left(\frac{X-\xs}{h_n}\right) K_{h_n}^{\upsilon}(X-\xs)\theta(x)\right] \\ &=\frac{1}{h_n^{\upsilon D}}\int_{[-1,1]^D} \Theta\left(\frac{w-\xs}{h_n}\right)K^{\upsilon}\left(\frac{w-\xs}{h_n}\right)\theta(w)p(w)\,dw \\ &=\frac{1}{h_n^{(\upsilon-1)D}}\int_{([-1,1]^D-x)/h_n} \Theta(u)K^{\upsilon}(u) \theta(\xs+u h_n)p(\xs+u h_n)\,du \\ &\overset{(a)}{\lesssim} \frac{1}{h_n^{(\upsilon-1)D}}\int_{[-1,1]^D} \Theta(u)K^{\upsilon}(u) p(\xs+u h_n)\,du \\ &\overset{(b)}{=}\frac{1}{h_n^{(\upsilon-1)D}}\,.
\end{align*}
Here (a) follows from the compact support assumption on $K(\cdot)$ (by choice) and the local boundedness of $\theta(\cdot)$. Also, (b) follows from the boundedness of $K(\cdot)$ and $p(\cdot)$ (see \eqref{eq:kernel} and \cref{asn:lowman}(1) respectively).
\vspace{0.05in}

\noindent (iv). Recall $h_n\le \rho_n$. The density $f_{n, Q}$ of $X$ on the target domain satisfies 
\begin{align}
\label{eq:def_target_X_density}
    f_{n, Q}(w) := \frac{1}{\rho_n^D} \int_{[-1, 1]^d} g\left(\frac{w - \phi(v)}{\rho_n}\right) \ q(v) \ dv \,.
\end{align} 
 Further, when $x\in\mrh$, there exists $z_0\in\mathcal{M}$ and $u_0\in [-1,1]^D$ such that $x=\phi(z_0)+u_0$. Without loss of generality, assume that $z_0=0$. Note that 
\begin{align*}
    &\;\;\;\;\bbE_Q \left[\Theta\left(\frac{X-\xs}{h_n}\right) K_{h_n}^{\upsilon}(X-\xs) \theta(x)\right] \\ & = \frac{1}{h_n^{\upsilon D}}\int_{[-1,1]^D} \Theta\left(\frac{w-\xs}{h_n}\right) K^{\upsilon}\left(\frac{w-\xs}{h_n}\right)\theta(w) f_{n,Q}(w)\, dw \\ & = \frac{1}{h_n^{(\upsilon-1)D}}\int_{([-1,1]^D-x)/h_n} \Theta(u)K^{\upsilon}(u)\theta(\xs+u h_n)f_{n,Q}(\xs+u h_n)\,du \\ & \overset{(a)}{\lesssim} \frac{1}{\rho_n^D h_n^{(\upsilon-1)D}}\int_{[-1,1]^D}\int_{[-1,1]^d} \Theta(u) K^{\upsilon}(u) g\left(\frac{\phi(0)+\rho_n u_0 + u h_n - \phi(v)}{\rho_n}\right) q(v)\,dv \,du \\ &=\frac{\rho_n^{d-D}}{h_n^{(\upsilon-1)D}}\int_{[-1,1]^D}\int_{[-1,1]^d/\rho_n} \Theta(u) K^{\upsilon}(u) g\left(\frac{\phi(0)-\phi(s \rho_n)}{\rho_n}+ u_0 + u \frac{h_n}{\rho_n}\right) q(s\rho_n)\,ds \,du \\ & \overset{(b)}{=}\frac{\rho_n^{d-D}}{h_n^{(\upsilon-1)D}}\int_{[-1,1]^D}\int_{\cnu} \Theta(u) K^{\upsilon}(u) g\left(\frac{\phi(0)-\phi(s \rho_n)}{\rho_n}+ u_0 + u \frac{h_n}{\rho_n}\right) q(s\rho_n)\,ds \,du \\ & \overset{(c)}{\lesssim} \frac{\rho_n^{d-D}}{h_n^{(\upsilon-1)D}}\,.
\end{align*}

Here (a) follows the compact support of $K(\cdot)$ (by choice) and the locally bounded nature of $\theta(\cdot)$. Next, we note that (b) follows from the definition of $\cnu$ in part (i). Finally (c) follows from \cref{asn:lowman}(2).

\vspace{0.05in}

\noindent (v). Recall $\rho_n\le h_n$. Note that 
\begin{small}
\begin{align*}
    &\;\;\;\; \bbE_Q [\Theta((X-\xs)/h_n) K_{h_n}^{\upsilon}(X-\xs) \theta(x)] \\ & = \frac{1}{h_n^{\upsilon D}}\int_{[-1,1]^d} \int_{[-1,1]^D} \Theta\left(\frac{\phi(v)+\rho_n u - \phi(0) - \rho_n u_0}{h_n}\right) K^{\upsilon}\left(\frac{\phi(v)+\rho_n u - \phi(0) - \rho_n u_0}{h_n}\right)\theta(\phi(v)+\rho_n u)g(u) q(v)\, du\, dv \\ & \overset{(a)}{\lesssim} \frac{1}{h_n^{\upsilon D - d}}\int_{[-1,1]^d/h_n}\int_{[-1,1]^D} \Theta\left(\frac{\phi(sh_n)-\phi(0)}{h_n}+\frac{\rho_n}{h_n}(u-u_0)\right) K^{\upsilon}\left(\frac{\phi(sh_n)-\phi(0)}{h_n}+\frac{\rho_n}{h_n}(u-u_0)\right)g(u)q(s h_n)\,du \,dv \\ &\overset{(b)}{=}\frac{1}{h_n^{\upsilon D-d}}\int_{[-1,1]^D} \int_{\ctu} \Theta\left(\frac{\phi(sh_n)-\phi(0)}{h_n}+\frac{\rho_n}{h_n}(u-u_0)\right) K^{\upsilon}\left(\frac{\phi(sh_n)-\phi(0)}{h_n}+\frac{\rho_n}{h_n}(u-u_0)\right)g(u)q(s h_n)\,du \,dv \\ & \overset{(c)}{\lesssim} \frac{1}{h_n^{\upsilon D-d}}\,,
\end{align*}
\end{small}
Once again, (a) follows the compact support of $K(\cdot)$ (by choice) and the locally bounded nature of $\theta(\cdot)$. Next, we note that (b) follows from the definition of $\ctu$ in part (ii). Finally (c) again follows from \cref{asn:lowman}(2). This completes the proof.
 \end{proof}
 \begin{proof}[Proof of \cref{lem:lbdeig}]
 Recall that $\hns=\big(n_P^{\frac{2\beta+d}{2\beta+D}}+n_Q\big)^{-\frac{1}{2\beta+d}}$. We begin by noting that 
     \begin{align}
\label{eq:break_app_man_exp_1}
    S_n=\E[\hat S_n/\psi_n] & = \frac{1}{n\psi_n} \left\{n_P \bbE_P\left[\bz\left(\frac{X - \xs}{h_n}\right)\bz\left(\frac{X - \xs}{h_n}\right)^\top K_{h_n}(X - \xs) \right] \right. \notag \\
    & \qquad \qquad \left. + n_Q \bbE_Q\left[\bz\left(\frac{X - \xs}{h_n}\right)\bz\left(\frac{X - \xs}{h_n}\right)^\top K_{h_n}(X - \xs) \right] \right\} \notag \\
    & =: \frac{n_P}{n\psi_n} T_P + \frac{n_Q}{n\psi_n} T_Q  
\end{align}
For $T_P$ observe that: 
\begin{align*}
    T_P & = \bbE_P\left[\bz\left(\frac{X - \xs}{h_n}\right)\bz\left(\frac{X - \xs}{h_n}\right)^\top K_{h_n}(X - \xs) \right] \\
    & = \frac{1}{h_n^D} \int_{[-1, 1]^D} \bz\left(\frac{w - \xs}{h_n}\right)\bz\left(\frac{w - \xs}{h_n}\right)^\top K\left(\frac{w - \xs}{h_n}\right) p(w) \ dw \\
    & = \int_{[-1, 1]^D/h_n} \bz\left(u\right)\bz\left(u\right)^\top K\left(u\right) p(\xs + h_nu) \ du \\
    & \overset{(i)}{=} \int_{[-1, 1]^D} \bz\left(u\right)\bz\left(u\right)^\top K\left(u\right) p(\xs + h_nu) \ du  \\
    & \overset{(ii)}{=} p(\xs) \int_{[-1, 1]^D} \bz\left(u\right)\bz\left(u\right)^\top K\left(u\right) \ du + O(h_n) =: \bA + O(h_n) \,.
\end{align*}

\noindent Here (i) follows from the compact support assumption on the kernel $K$ (by choice) and (ii) follows from the lower boundedness assumption on $p(\cdot)$ and its Lipschitz continuity (see \cref{asn:lowman}(1)).

\noindent Now, for $T_Q$, we divide the analysis into two parts: (i) $h_n \le \rho_n$ and (ii) $h_n \ge \rho_n$. 

\noindent \emph{Case (i).} Suppose that $h_n \le \rho_n$. Recall that the density $f_{n,Q}$ of $X$ on the target domain satisfies \eqref{eq:def_target_X_density}. Using this, we obtain: 
\begin{align*}
T_Q &= \bbE_Q\left[\bz\left(\frac{X - \xs}{h_n}\right)\bz\left(\frac{X - \xs}{h_n}\right)^\top K_{h_n}(X - \xs) \right]  \\
& =\frac{1}{h_n^D} \int_{\mathcal{M}_{\rho_n}} \bz\left(\frac{w - \xs}{h_n}\right)\bz\left(\frac{w - \xs}{h_n}\right)^\top K\left(\frac{w - \xs}{h_n}\right) f_{n, Q}(w) \ dw \\ 
    & = \int_{(\mrh-\xs)/h_n} \bz\left(u\right)\bz\left(u\right)^\top K\left(u\right) f_{n, Q}(\xs + h_nu) \ du  \\
    & \overset{(i)}{=} \int_{[-1, 1]^D} \bz\left(u\right)\bz\left(u\right)^\top K\left(u\right) f_{n, Q}(\xs + h_nu) \ du  \\
& = \frac{1}{\rho_n^D} \int_{[-1, 1]^D} \bz\left(u\right)\bz\left(u\right)^\top K\left(u\right) \int_{[-1, 1]^d}  g\left(\frac{\xs + h_n u - \phi(v)}{\rho_n}\right) \ q(v) \ dv \ du \\
& = \frac{1}{\rho_n^D} \int_{[-1, 1]^D} \bz\left(u\right)\bz\left(u\right)^\top K\left(u\right) \int_{[-1, 1]^d}  g\left(\frac{\phi(0)  - \phi(v)}{\rho_n} + u_0 + \frac{h_n}{\rho_n} u\right) \ q(v) \ dv \ du \\
& = \frac{\rho_n^d}{\rho_n^D} \int_{[-1, 1]^D} \bz\left(u\right)\bz\left(u\right)^\top K\left(u\right) \int_{[-1, 1]^d/\rho_n}  g\left(\frac{\phi(0)  - \phi(s \rho_n)}{\rho_n} + u_0 + \frac{h_n}{\rho_n} u\right) \ q(s \rho_n) \ ds \ du \\
& = \frac{\rho_n^d}{\rho_n^D} \int_{[-1, 1]^D} \bz\left(u\right)\bz\left(u\right)^\top K\left(u\right) \int_{\cnu}  g\left(\frac{\phi(0)  - \phi(s \rho_n)}{\rho_n} + u_0 + \frac{h_n}{\rho_n} u\right) \ q(s \rho_n) \, ds \, du, 
\end{align*}
where $\cnu$ is defined as in \eqref{eq:cenu1}. By \cref{lem:techlem1}, part (i), the Lebesgue measure of $\cnu$ is uniformly bounded in $n$ and $u$.
The conclusion in (i) follows from the compact support assumption on the Kernel $K$. Therefore, by \cref{asn:lowman}(2), we get:

\begin{align*}
T_Q & = \frac{\rho_n^d}{\rho_n^D} \left(q(0)\int_{[-1, 1]^D} \bz\left(u\right)\bz\left(u\right)^\top K\left(u\right) \int_{\cC_{n,u}}  g\left(\frac{\phi(0)  - \phi(s \rho_n)}{\rho_n} + u_0 + \frac{h_n}{\rho_n} u\right) \ \ ds \ du + O(\rho_n)\right) \\
& =: \frac{\rho_n^d}{\rho_n^D}\left(\bB_n + O(\rho_n)\right).
\end{align*}

\noindent Using the expressions for $T_P$ and $T_Q$, we then have: 
\begin{align*}
    \E\left[\frac{\hat S_n}{\psi_n}\right] & = \frac{\frac{n_P}{n}(\bA + O(h_n)) + \frac{n_Q}{n}\frac{\rho_n^d}{\rho_n^D}(\bB_n + O(\rho_n))}{\frac{n_P}{n} + \frac{n_Q}{n}\frac{\rho_n^d}{\rho_n^D}}
\end{align*}
This immediately implies: 
\begin{align}
\label{eq:min_eig_app_man}
\lambda_{\min}(\hat S_n/\psi_n) & \ge \min\left\{\lambda_{\min}(\bA + O(h_n)), \lambda_{\min}(\bB_n + O(\rho_n))\right\} \notag \\
& \ge \min\left\{\lambda_{\min}(\bA), \lambda_{\min}(\bB_n)\right\} - O(\rho_n) 
\end{align}
where in the last line, we use the fact that we are working under the assumption $\rho_n \ge h_n$. As $\rho_n \downarrow 0$, we effectively need to show that $\lambda_{\min}(\bA)$ and $\lambda_{\min}(\bB_n)$ are bounded away from 0 uniformly over $n$. For $\bA$, the lower bound is immediate from Assumptions \ref{asn:lowman}(1) and \eqref{eq:kernel}, as $\bA$ does not depend on $n$. For $\bB_n$ it is more tricky as $\bB_n$ depends on $n$ through the ratio $h_n/\rho_n$. To provide a lower bound for the minimum eigenvalue of $\bB_n$, we first note that  
$$
\liminf \lambda_{\min}(\bB_n) \ge c_0  q(0) \liminf \left(  \lambda_{\min}\left(\int_{[-1, 1]^D} \rm{Leb}(\cnu)\bz\left(u\right)\bz\left(u\right)^\top 
 \ du \right)\right)
$$
for some constant $c_0>0$ where we have leveraged the lower bounds from \cref{asn:lowman}(2) and \eqref{eq:kernel}. Here $\cnu$ is defined as in \eqref{eq:cenu1}. As $\cnu$ has Lebesgue measure bounded away from $0$ uniformly in $n$ and $u$ (see \cref{lem:techlem1}, part (i)), the conclusion follows.

\noindent
\emph{Case (ii).} Now suppose that $h_n \ge \rho_n$. The analysis of $T_P$ will remain the same. For $T_Q$, we define $\bm:=\bz \bz^{\top}$ and note that 
\begin{align*}
    T_Q &= \bbE_Q\left[\bz\left(\frac{X - \xs}{h_n}\right)\bz\left(\frac{X - \xs}{h_n}\right)^\top K_{h_n}(X - \xs) \right]  \\
    & = \frac{1}{h_n^D}\int_{[-1, 1]^d} \int_{[-1, 1]^D} \bz\left(\frac{\phi(v) - \phi(0)}{h_n} + \frac{\rho_n}{h_n}(u - u_0)\right)\bz\left(\frac{\phi(v) - \phi(0)}{h_n}+ \frac{\rho_n}{h_n}(u - u_0)\right)^\top \\
    & \qquad \qquad \qquad \times K\left(\frac{\phi(v) - \phi(0)}{h_n}+ \frac{\rho_n}{h_n}(u - u_0)\right) \ q(v) g(u) \ dv \ du \\
    & =  \frac{1}{h_n^D}\int_{[-1, 1]^d} \int_{[-1, 1]^D} \bm\left(\frac{\phi(v) - \phi(0)}{h_n}+ \frac{\rho_n}{h_n}(u - u_0)\right)K\left(\frac{\phi(v) - \phi(0)}{h_n}+ \frac{\rho_n}{h_n}(u - u_0)\right) \ q(v) g(u) \ dv \ du \\
    & = \frac{h_n^d}{h_n^D} \int_{[-1, 1]^d/h_n} \int_{[-1, 1]^D} \bm\left(\frac{\phi(s h_n) - \phi(0)}{h_n}+ \frac{\rho_n}{h_n}(u - u_0)\right) \\
    & \qquad \qquad \qquad \times   K\left(\frac{\phi(s h_n) - \phi(0)}{h_n}+ \frac{\rho_n}{h_n}(u - u_0)\right) \ q(s h_n) g(u) \ ds \ du \\ & = \frac{h_n^d}{h_n^D} \int_{[-1, 1]^D} \int_{\ctu} \bm\left(\frac{\phi(s h_n) - \phi(0)}{h_n}+ \frac{\rho_n}{h_n}(u - u_0)\right) \\
    & \qquad \qquad \qquad \times   K\left(\frac{\phi(s h_n) - \phi(0)}{h_n}+ \frac{\rho_n}{h_n}(u - u_0)\right) \ q(sh_n) g(u) \ ds \ du, 
\end{align*}
    where the set $\ctu$ is defined as in \eqref{eq:cenu2}.

Note that $\ctu$ has Lebesgue measure bounded above and below, away from $\infty$ and $0$ respectively (see \cref{lem:techlem1}, part (ii)).  Therefore, using the fact that $\phi(s h_n)-\phi(0)=h_n\nabla\phi(0)s+o(h_n)$ (by \cref{asn:lowman}(2)), we observe that,

    \begin{align*}
    T_Q & = \frac{h_n^d}{h_n^D} \left(q(0) \int_{[-1, 1]^D} \int_{\ctu} \bm\left(\frac{\phi(s h_n) - \phi(0)}{h_n}+ \frac{\rho_n}{h_n}(u - u_0)\right) \right. \\
    & \qquad \qquad \qquad \left. \times   K\left(\frac{\phi(s h_n) - \phi(0)}{h_n}+ \frac{\rho_n}{h_n}(u - u_0)\right) \ g(u) \ ds \ du + O(h_n)\right) \\
     & = \frac{h_n^d}{h_n^D} \left(q(0) \int_{[-1, 1]^D} \int_{\ctu} \bm\left(\nabla \phi(0) s + \frac{\rho_n}{h_n}(u - u_0) + o(1)\right) \right. \\
    & \qquad \qquad \qquad \left. \times   K\left(\frac{\phi(s h_n) - \phi(0)}{h_n}+ \frac{\rho_n}{h_n}(u - u_0)\right) \ g(u) \ ds \ du + O(h_n)\right) \\
      & = \frac{h_n^d}{h_n^D} \left(q(0) \int_{[-1, 1]^D} \int_{\ctu} \bm\left(\nabla \phi(0) s + \frac{\rho_n}{h_n}(u - u_0)\right) \right. \\
    & \qquad \qquad \qquad \left. \times   K\left(\frac{\phi(sh_n) - \phi(0)}{h_n}+ \frac{\rho_n}{h_n}(u - u_0)\right) \ g(u) \ ds \ du + o(1)\right) \\
    & \triangleq  \frac{h_n^d}{h_n^D} \left(\bB_n + o(1)\right) \,.
\end{align*}
It thus suffices to argue that $\lambda_{\min}(\bB_n)$ is bounded away from $0$ uniformly over $n$. To that end, fix some unit vector $\ba$. Then, we have: 
\begin{align*}
    \ba^\top \bB_n \ba & \ge  c_0\int_{[-1, 1]^D} \int_{\ctu} \left(a^\top \bz\left(\nabla \phi(0) w+ \frac{\rho_n}{h_n}(u - u_0)\right)  \right)^2  \ dw \ du, 
\end{align*}
for some constant $c_0>0$, where we have again used \cref{asn:lowman}(2) and \eqref{eq:kernel}. Now, the right-hand side of the above equation is lower bounded as $\bz$ is a polynomial which can have only finitely many roots, and $\ctu$ has Lebesgue measure bounded away from $0$. 
 \end{proof}

 \begin{proof}[Proof of \cref{lem:conc_matrix}]
     Firstly, using \citet[Lemma 4.4.1]{Vershynin2018}, we observe that there exists some $C>0$ (depending on $d$ and $\ell$), such that 
    \begin{align*}
     \P\left(\left\|\frac{\hat S_n}{\psi_n} - S_n\right\| \ge 2t\right) & \le C  \max_{a: \lVert a\rVert=1} \P\left(\left|\frac{a^\top\hat S_n a}{\psi_n} - a^\top S_n a\right| \ge t\right) 
     \end{align*}
     Let us define $$\mu_i(a):=\frac{1}{n\psi_n}\E (a^{\top}\bz((X_i-\xs)/h_n))^2K_{h_n}(X_i-\xs), \quad \mbox{and} \quad H_i(a):=\frac{1}{n\psi_n}(a^{\top}\bz((X_i-\xs)/h_n))^2K_{h_n}(X_i-\xs)-\mu_i(a)$$ 
     for $1\le i\le n$. The above displays then imply 
     \begin{align*}
      \P\left(\left\|\frac{\hat S_n}{\psi_n} - S_n\right\| \ge 2t\right) \le C \max_{a: \lVert a\rVert=1}\P\left(\bigg|\sum_{i=1}^n H_i\bigg|\ge t\right)\,.
    \end{align*}
 Next, we apply Bernstein's inequality \citet[Theorem 2.8.1]{Vershynin2018}, for which we need a bound on the variance and the $L_\infty$ norm of $H_i$. For $L_\infty$ norm bound, note that  $(a^\top \bz((X_i - \xs)/h_n))^2K_{h_n}(X_i - \xs)$ is uniformly bounded for any unit vector $a$, because this term is non-zero only when $K_{h_n}(X_i - \xs)$, only when $\|X_i - \xs\|/h_n \le C$ (as $K$ is compactly supported). As $\bz$ is a polynomial in $(X_i - \xs)/h_n$, it is bounded for all $\|X_i- x\|/h_n$ and consequently the term is bounded uniformly in $a$ for all $\|a\|=1$. Therefore, $\sup_{a:\|a\|=1}\max_{1\le i\le n} |H_i(a)| \le M/(n\psi_n h_n^d)$ for some constant $M$. Regarding the variance, we have: 
\allowdisplaybreaks
\begin{align*}
    & \Var_P\left((a^\top \bz((X - \xs)/h_n))^2K_{h_n}(X - \xs)\right)  \le \bbE_P\left[\left((a^\top \bz((X - \xs)/h_n))^2K_{h_n}(X - \xs)\right)^2\right] \lesssim \frac{1}{h_n^D} \,.
    \end{align*}
The last inequality follows from \cref{lem:techlem1}, part (iii).

We move on to the variance under $Q$. Here we will divide into two cases (i) $\rho_n\ge h_n$, and (ii) $\rho_n\le h_n$. 

\vspace{0.05in}

\noindent \emph{When $\rho_n\ge h_n$.} In this case, we recall the density $f_{n,Q}(\cdot)$ defined in \eqref{eq:def_target_X_density}. We can then write 
    \begin{align*}
     & \Var_Q\left((a^\top \bz((X - \xs)/h_n))^2K_{h_n}(X - \xs)\right)  \le \bbE_Q\left[\left((a^\top \bz((X - \xs)/h_n))^2K_{h_n}(X - \xs)\right)^2\right] \lesssim \frac{\rho_n^{d-D}}{h_n^D}.
     \end{align*}
     The last inequality follows from \cref{lem:techlem1}, part (iv). 
     
Next, recall that under $h_n\le \rho_n$, we have $n\psi_n=n_P+n_Q \rho_n^{d-D}$. This implies 
$$
\sum_i \Var(H_i) \lesssim \frac{1}{(n\psi_n)^2} \left(\frac{n_P}{h_n^D} + \frac{n_Q \rho_n^{d-D}}{h_n^D}\right) = \frac{1}{n\psi_n h_n^D} \,.
$$

\noindent \emph{When $\rho_n\le h_n$.} In this case, we bound the variance under $Q$ as follows: 

\begin{align*}
     &\;\;\;\; \Var_Q\left((a^\top \bz((X - \xs)/h_n))^2K_{h_n}(X - \xs)\right) \\
     & \le \bbE_Q\left[\left((a^\top \bz((X - \xs)/h_n))^2K_{h_n}(X - \xs)\right)^2\right] \lesssim \frac{1}{h_n^{2D-d}}.
\end{align*}
Recall that under $\rho_n\le h_n$, we have $n\psi_n=n_P+n_Q h_n^{d-D}$. This implies 

$$
\sum_i \Var(H_i) \lesssim \frac{c_1}{(n\psi_n)^2} \left(\frac{n_P}{h_n^D} + \frac{n_Q}{h_n^{2D - d}}\right) = \frac{1}{n\psi_n h_n^D} \,.
$$
Therefore the same bound holds for $\sum_{i=1}^N \Var(H_i)$ in both cases. An application of the Bernstein inequality (see \citet[Lemma 4.4.1]{Vershynin2018}) then yields: 

\begin{align*}
    \P\left(\left|\sum_{i = 1}^n H_i\right| \ge t\right)  & \le 2\exp{\left(-\frac{t^2}{\frac{c_1}{n\psi_n h_n^D} + \frac{2M}{3n\psi_n h_n^d}t}\right)} \\
    & = \exp{\left(-n\psi_nh_n^D\frac{t^2}{c_1 + \frac{2M}{3}t}\right)} \\
    & \le \exp{\left(-n\psi_nh_n^D\frac{t^2}{c_1 + c_2 t}\right)}\,
\end{align*}
for the constant $c_2=2M/3>0$ and all large enough $n$. This completes the proof of the concentration inequality. Now regarding the $k^{th}$ moment: 
\begin{align*}
    \E\left[\left\|\frac{\hat S_n}{\psi_n} - S_n\right\|^k\right] & = k\int_0^{\infty}t^{k-1}  \P\left(\left\|\frac{\hat S_n}{\psi_n} - S_n\right\| \ge t\right) \ dt \\
    & \lesssim C\int_0^{\infty}t^{k-1}  \exp{\left(-\frac{t^2 (n\psi_nh_n^D)}{c_1 + c_2 t}\right)} \ dt \\
    & = \frac{1}{(n\psi_nh_n^D)^{k/2}} \int_0^{\infty}z^{k-1}  \exp{\left(-\frac{z^2}{c_1 + c_2 \frac{z}{\sqrt{n\psi_n h_n^D}}}\right)} \ dt \\
    & \overset{(i)}{\lesssim} \frac{1}{(n\psi_nh_n^D)^{k/2}} \int_0^{\infty}z^{k-1}  \exp{\left(-\frac{z^2}{c_1 + c_2 z}\right)} \ dt \\
    & \lesssim \frac{1}{(n\psi_nh_n^D)^{k/2}} \,.
\end{align*}
In (i), we have used the fact that $n\psi_n h_n^D \uparrow \infty$. This completes the proof.  
\end{proof}

Next, we move on to the proof of \cref{lem:bounding_mean_part}. We need two preparatory lemmas. 
\begin{lem}
    \label{lem:eps_4_moment_bound}
    Define 
    $$\tilde{s}_n :=\frac{1}{n}\sum_{i=1}^n \bz\left(\frac{X_i-\xs}{h_n}\right) K_{h_n}(X_i-\xs)f^\star(X_i).$$
    Suppose that Assumptions \ref{asn:covshift} --- \ref{asn:regf}, \eqref{eq:kernel} hold, and $h_n\to 0$. Then the following bound holds: 
    
    $$
    \E\left[\left\|\frac{\hat s_{n}-\tilde{s}_n}{\psi_n}\right\|^4\right] \lesssim \left\{\frac{1}{(n\psi_n h^D)^3} + \frac{1}{(n\psi_n h^D)^2}\right\}  = o(1) \ \ \ \text{ as } \ n\psi_n h^D \uparrow \infty \,.
    $$
\end{lem}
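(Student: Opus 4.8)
The plan is to reduce everything to a sum of independent, mean-zero random vectors and then expand a fourth moment. First I would write $\hat s_n - \tilde s_n = \frac{1}{n}\sum_{i=1}^n \xi_i$, where $\xi_i := \bz\big((X_i-\xs)/h_n\big)\,K_{h_n}(X_i-\xs)\,\eps_i$ and $\eps_i := Y_i - f^\star(X_i)$. By \cref{asn:covshift}, $\E[\eps_i\mid X_i]=0$ on both domains, so $\E[\xi_i\mid X_i]=0$ and hence the $\xi_i$ are independent (source and target samples being mutually independent and i.i.d.\ within each) and mean-zero. The quantity to bound is $(n\psi_n)^{-4}\,\E\big\|\sum_i \xi_i\big\|^4$.

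Next, since $\bz(\cdot)$ has a fixed finite number $\deg(\ell)$ of coordinates, I would use $\|v\|^4 = \big(\sum_a v_a^2\big)^2 \le \deg(\ell)\sum_a v_a^4$ to reduce to bounding $\E\big[\big(\sum_i \xi_i^{(a)}\big)^4\big]$ coordinatewise. Expanding the fourth power and invoking independence together with $\E[\xi_i^{(a)}]=0$, every term containing an index that appears exactly once vanishes, leaving the standard bound
$$
\E\Big[\Big(\sum_i \xi_i^{(a)}\Big)^4\Big] \;\le\; \sum_i \E[(\xi_i^{(a)})^4] \;+\; 3\Big(\sum_i \E[(\xi_i^{(a)})^2]\Big)^2 .
$$

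The remaining step is to estimate the two sums, each of which splits into a source part ($n_P$ terms under $P$) and a target part ($n_Q$ terms under $Q$). Conditioning on $X$ turns the second-moment summand into $\E\big[(\bz^{(a)}((X-\xs)/h_n))^2 K_{h_n}(X-\xs)^2\,\E[\eps^2\mid X]\big]$, and since $\E[\eps^2\mid X]\le\sqrt{\E[\eps^4\mid X]}$ is locally bounded near $\xs$ by \cref{asn:lowman}(3), I would apply \cref{lem:techlem1}(iii) under $P$ and (iv)--(v) under $Q$ with $\upsilon=2$, $\Theta=(\bz^{(a)})^2$, $\theta=\E[\eps^2\mid X]$, obtaining $\sum_i \E[(\xi_i^{(a)})^2]\lesssim n_P/h_n^D + n_Q(\rho_n\vee h_n)^{d-D}/h_n^D = n\psi_n/h_n^D$. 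The same argument with $\upsilon=4$, $\Theta=(\bz^{(a)})^4$, $\theta=\E[\eps^4\mid X]$ gives $\sum_i \E[(\xi_i^{(a)})^4]\lesssim n\psi_n/h_n^{3D}$. Substituting these into the display, summing over the $\deg(\ell)$ coordinates, and dividing by $(n\psi_n)^4$ yields exactly $\tfrac{1}{(n\psi_n h_n^D)^3}+\tfrac{1}{(n\psi_n h_n^D)^2}$, which is $o(1)$ when $n\psi_n h_n^D\to\infty$.

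This is essentially routine; the only things to be careful about are (a) verifying the hypotheses of \cref{lem:techlem1}---the prefactors $(\bz^{(a)})^2,(\bz^{(a)})^4$ are polynomials hence bounded on $[-1,1]^D$, and the conditional second/fourth moment maps of the residuals are locally bounded near $\xs$ under both $P$ and $Q$ by \cref{asn:lowman}(3)---and (b) tracking the exponents in \cref{lem:techlem1} so that the factor $(\rho_n\vee h_n)^{d-D}$ merges the two regimes $\rho_n\le h_n$ and $\rho_n>h_n$ into one clean bound. I do not expect a genuine obstacle.
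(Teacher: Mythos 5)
Your proposal is correct and takes essentially the same route as the paper: decompose coordinatewise, exploit that the residuals $\eps_i = Y_i - f^\star(X_i)$ are conditionally mean-zero so only the diagonal fourth-moment terms and the off-diagonal products of second moments survive the expansion of $\E[(\sum_i \xi_i^{(a)})^4]$, and then bound both via \cref{lem:techlem1} parts (iii)--(v) under $P$ and $Q$. The only cosmetic difference is that you invoke the Rosenthal-type inequality $\E[(\sum_i \xi_i^{(a)})^4]\le\sum_i\E[(\xi_i^{(a)})^4]+3(\sum_i\E[(\xi_i^{(a)})^2])^2$ by name, while the paper simply writes out the surviving terms of the multinomial expansion directly.
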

\begin{lem}
\label{lem:bound_centered_s_n_tilde}
Under the same assumptions as in \cref{lem:eps_4_moment_bound}, we have:
$$
\E\left[\left\|\frac{\tilde s_n}{\psi_n} - s_n \right\|_2^4\right] \le \frac{C}{(n\psi_nh^D)^2}  = o(1)\,.
$$
\end{lem}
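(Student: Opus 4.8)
The plan is to view $\tilde s_n/\psi_n - s_n$ as a centered sum of independent (but not identically distributed) random vectors and to bound its fourth moment by direct expansion. First note that under \cref{asn:covshift}, $\bbE[Y_i\mid X_i]=f^\star(X_i)$, so $\bbE[\hat s_n]=\bbE[\tilde s_n]$ and hence $s_n=\bbE[\tilde s_n/\psi_n]$; thus the quantity of interest is genuinely centered. Since $\tilde s_n$ is a vector of fixed dimension $\deg(\ell)$, Cauchy--Schwarz gives $\|v\|_2^4\le\deg(\ell)\sum_{j}v_j^4$, so it suffices to bound $\bbE\big[(a^\top(\tilde s_n/\psi_n - s_n))^4\big]$ for $a$ a canonical basis vector. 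Writing $a^\top(\tilde s_n/\psi_n - s_n)=\tfrac{1}{n\psi_n}\sum_{i=1}^n W_i$ with $W_i:=a^\top\bz((X_i-\xs)/h_n)K_{h_n}(X_i-\xs)f^\star(X_i)-\bbE[\cdot]$ independent and mean zero, independence and the mean-zero property yield
$$
\bbE\Big[\Big(\sum_{i=1}^n W_i\Big)^4\Big]=\sum_{i=1}^n\bbE[W_i^4]+3\sum_{i\neq k}\bbE[W_i^2]\,\bbE[W_k^2]\le\sum_{i=1}^n\bbE[W_i^4]+3\Big(\sum_{i=1}^n\bbE[W_i^2]\Big)^2.
$$

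Next I would bound the second and fourth moments of $W_i$ using \cref{lem:techlem1}. Because $K$ is compactly supported, $K_{h_n}(X_i-\xs)$ is nonzero only on $\{\|X_i-\xs\|\le h_n\}$, and there $f^\star$ is bounded by a constant (it is locally bounded near $\xs$ by \cref{asn:regf}); moreover $u\mapsto(a^\top\bz(u))^\upsilon$ is bounded on $[-1,1]^D$. Hence $\bbE[W_i^2]\lesssim\bbE[(a^\top\bz((X_i-\xs)/h_n))^2K_{h_n}(X_i-\xs)^2(f^\star(X_i))^2]$ and $\bbE[W_i^4]\lesssim\bbE[(a^\top\bz((X_i-\xs)/h_n))^4K_{h_n}(X_i-\xs)^4(f^\star(X_i))^4]$, which are exactly of the form handled by \cref{lem:techlem1}: part (iii) with $\upsilon=2,4$ for the $n_P$ source observations, and parts (iv)--(v) with $\upsilon=2,4$ for the $n_Q$ target observations. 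Splitting according to whether $h_n\le\rho_n$ or $\rho_n\le h_n$, and recalling $n\psi_n=n_P+n_Q(\rho_n\vee h_n)^{d-D}$, one checks in both regimes that $\sum_i\bbE[W_i^2]\lesssim n\psi_n h_n^{-D}$ and $\sum_i\bbE[W_i^4]\lesssim n\psi_n h_n^{-3D}$.

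Plugging these into the display above gives $\bbE[(\sum_iW_i)^4]\lesssim n\psi_n h_n^{-3D}+(n\psi_n)^2h_n^{-2D}$, so that
$$
\bbE\big[(a^\top(\tilde s_n/\psi_n - s_n))^4\big]\lesssim\frac{1}{(n\psi_n h_n^D)^3}+\frac{1}{(n\psi_n h_n^D)^2}\lesssim\frac{1}{(n\psi_n h_n^D)^2},
$$
where the last inequality uses $n\psi_n h_n^D\to\infty$. Summing over the $\deg(\ell)$ coordinates (a fixed number) yields $\bbE[\|\tilde s_n/\psi_n - s_n\|_2^4]\lesssim(n\psi_n h_n^D)^{-2}=o(1)$. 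There is no genuine obstacle here; the main (modest) technical point is the two-case analysis ($h_n\le\rho_n$ versus $\rho_n\le h_n$) in applying \cref{lem:techlem1} with exponents $\upsilon=2$ and $\upsilon=4$, which parallels the variance bookkeeping already carried out in the proof of \cref{thm:approx_manifold_upper_bound} and in \cref{lem:conc_matrix}. One should also be careful to exploit that $f^\star$ need only be controlled on the shrinking neighborhood where the kernel is supported, rather than assuming global boundedness of $f^\star$.
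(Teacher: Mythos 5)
Your proof is correct, but it takes a genuinely different route from the paper's. You compute $\bbE\big[\big(\sum_i W_i\big)^4\big]$ directly via the classical fourth-moment identity for independent mean-zero variables, $\sum_i\bbE[W_i^4]+3\sum_{i\neq k}\bbE[W_i^2]\bbE[W_k^2]$, then plug in the $\upsilon=2,4$ moment bounds from \cref{lem:techlem1} and observe that the $(n\psi_n h_n^D)^{-3}$ term is dominated by the $(n\psi_n h_n^D)^{-2}$ term once $n\psi_n h_n^D\to\infty$. The paper instead proves a full exponential (Bernstein-type) tail bound: it reduces $\|\sum_i H_i\|$ to a scalar projection via a $1/2$-covering of the sphere, applies Bernstein's inequality (needing both a variance bound and a pointwise $L^\infty$ bound on each summand), and then recovers the fourth moment by a tail-integral computation as in \cref{lem:conc_matrix}. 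Your approach is more elementary — it needs only second and fourth moments, no covering argument, no $L^\infty$ control, and no tail integration — and it exactly parallels the paper's own proof of the sibling result \cref{lem:eps_4_moment_bound}. The paper's heavier machinery buys the exponential concentration itself, which is reused verbatim to prove \cref{lem:exp_conc_sn} (and hence \cref{lem:stochastic_conc}), so it is not wasted effort in context; but for the present statement alone your direct moment expansion is cleaner and equally rigorous, provided you keep the two-case split ($\rho_n\vee h_n$) in applying parts (iii)--(v) of \cref{lem:techlem1}, which you do.
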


\begin{proof}[Proof of \cref{lem:bounding_mean_part}]

    First consider the expectation $s_n$ of $\hat s_n/\psi_n$: 
    \allowdisplaybreaks
    \begin{align*}
        s_n = \E\left[\frac{\hat s_n}{\psi_n}\right]  & = \frac{1}{n\psi_n}\sum_i \E\left[\bz\left(\frac{X_i - \xs}{h}\right)K_{h_n}(X_i - \xs) f^\star(X_i)\right] 
    \end{align*}
    By \cref{lem:techlem1}, parts (iii), (iv), and (v), we get: 
    \begin{align}\label{eq:snO1}
    \left\lVert \E\left[\frac{\hat{s}_n}{\psi_n}\right]\right\rVert \lesssim \frac{1}{n\psi_n}(n_P + n_Q (\rho_n\vee h_n)^{d-D})=1.
    \end{align}
    Therefore, we have: 
    \begin{align*}
    \E\left[\left\|\frac{\hat s_n}{\psi_n}\right\|^4\right] & = \E\left[\left\|\frac{\hat{s}_n - \tilde s_n}{\psi_n} + \frac{\tilde{s}_n}{\psi_n} - s_n + s_n\right\|^4\right] \\
    & \lesssim \E\left\|\frac{\hat{s}_n - \tilde s_n}{\psi_n}\right\|^4 + \E\left\lVert\frac{\tilde{s}_n}{\psi_n}-s_n\right\rVert^4 + \lVert s_n\rVert^4 \lesssim 1.
    \end{align*}
    As argued before, $\|s_n\| \lesssim 1$. The conclusion now follows by combining Lemmas \ref{lem:eps_4_moment_bound} and \ref{lem:bound_centered_s_n_tilde}.
\end{proof}

\section{Proofs of Lemmas from Appendices \ref{sec:lepskithm} and \ref{sec:pfauxlem}}\label{sec:pfaddlem}

\begin{proof}[Proof of \cref{lem:rate_discrepancy_Lepski}]
Let us define
\[
S := n_P^{\frac{2\tilde{\beta} + d}{2\tilde{\beta} + D}} + n_Q,
\quad\text{and}\quad
A := \frac{2\tilde{\beta} + D}{2\tilde{\beta} + d}.
\]
We expand the expression:
\begin{align*}
h_{n,\tilde{\beta}}^{2\tilde{\beta}}(n_P h_{n,\tilde{\beta}}^D + n_Q h_{n,\tilde{\beta}}^d)
&= \left( \frac{S}{\log n} \right)^{-\frac{2\tilde{\beta}}{2\tilde{\beta} + d}} \cdot \left[ n_P \left( \frac{S}{\log n} \right)^{-\frac{D}{2\tilde{\beta} + d}} + n_Q \left( \frac{S}{\log n} \right)^{-\frac{d}{2\tilde{\beta} + d}} \right] \\
&= n_P \left( \frac{S}{\log n} \right)^{-\frac{2\tilde{\beta} + D}{2\tilde{\beta} + d}} + n_Q \left( \frac{S}{\log n} \right)^{-1} \\
&= n_P \left( \frac{S}{\log n} \right)^{-A} + \frac{n_Q \log n}{S}.
\end{align*}
We now consider two cases:
\paragraph{Case 1:} If $n_Q \ge n_P^{\frac{2\tilde{\beta} + d}{2\tilde{\beta} + D}}$, then $S \le 2n_Q$, so
\[
\frac{n_Q \log n}{S} \ge \frac{\log n}{2} \ge \frac{\log n}{2^A}.
\]

\paragraph{Case 2:} If $n_Q < n_P^{\frac{2\tilde{\beta} + d}{2\tilde{\beta} + D}}$, then $S \le 2n_P^{\frac{2\tilde{\beta} + d}{2\tilde{\beta} + D}}$, and
\begin{align*}
n_P \left( \frac{S}{\log n} \right)^{-A}
&\ge n_P \left( \frac{2n_P^{\frac{2\tilde{\beta} + d}{2\tilde{\beta} + D}}}{\log n} \right)^{-A}
= \left( \frac{\log n}{2} \right)^A \cdot n_P \cdot n_P^{-A \cdot \frac{2\tilde{\beta} + d}{2\tilde{\beta} + D}} \\
&= \left( \frac{\log n}{2} \right)^A \cdot n_P^{1 - A \cdot \frac{2\tilde{\beta} + d}{2\tilde{\beta} + D}} = \left( \frac{\log n}{2} \right)^A,
\end{align*}
since $A \cdot \frac{2\tilde{\beta} + d}{2\tilde{\beta} + D} = 1$. Hence
\[
n_P \left( \frac{S}{\log n} \right)^{-A} \ge \frac{\log n}{2^A}.
\]
In either case, we have
\[
h_{n,\tilde{\beta}}^{2\tilde{\beta}}(n_P h_{n,\tilde{\beta}}^D + n_Q h_{n,\tilde{\beta}}^d) \ge \frac{\log n}{2^A} =  \frac{\log n}{2^{\frac{2\tilde{\beta} + D}{2\tilde{\beta} + d}}}.
\]
\end{proof}

Next we move on to the proof of \cref{lem:stochastic_conc}. We need a preparatory lemma whose proof is deferred to the end of this Section.

\begin{lem}[Exponential concentration of $\hat s_n$ under boundedness]
    \label{lem:exp_conc_sn}
    Suppose $Y$ is compactly supported. Under the same Assumptions as in \cref{lem:bound_centered_s_n_tilde}, we have:
    $$
    \P\left(\left\|\frac{\hat s_n}{\psi_n} - s_n\right\|_2 \ge t\right) \le  C\exp\left(-n\psi_n h_n^D \frac{t^2}{c_1 + c_2 t}\right) \,.
    $$
    where $s_n = \E[\hat s_n/\psi_n]$. 
\end{lem}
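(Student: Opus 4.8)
The plan is to treat this as a vector Bernstein bound obtained by discretizing the sphere and applying a scalar Bernstein inequality, in exactly the style of the proof of \cref{lem:conc_matrix}; this is the exponential-tail counterpart of the fourth-moment estimates in \cref{lem:eps_4_moment_bound} and \cref{lem:bound_centered_s_n_tilde}, and the improvement is powered solely by the extra boundedness of $Y$. First I would write $\hat s_n/\psi_n - s_n = \sum_{i=1}^n(\xi_i - \E\xi_i)$ with $\xi_i := (n\psi_n)^{-1}\,\bz((X_i - \xs)/h_n)\,K_{h_n}(X_i - \xs)\,Y_i \in \R^{\deg(\ell)}$, a sum of independent centered random vectors (the first $n_P$ drawn from the source, the remaining $n_Q$ from the target). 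Since $\deg(\ell)$ is a fixed constant depending only on $(d,\ell)$, I would fix a $1/2$-net $\mathcal{N}$ of the Euclidean unit sphere in $\R^{\deg(\ell)}$, with $|\mathcal{N}| \le 5^{\deg(\ell)}$, use $\|v\|_2 \le 2\max_{a\in\mathcal{N}}|a^\top v|$, and reduce by a union bound to controlling $a^\top(\hat s_n/\psi_n - s_n) = \sum_i(G_i - \E G_i)$, $G_i := a^\top\xi_i$, uniformly over unit vectors $a$.

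Next I would supply the two ingredients Bernstein needs. For the sup-norm: the summand is supported on $\{\|X_i - \xs\| \le h_n\}$ by the compact support of $K$, where $\bz$ is a bounded polynomial and $K_{h_n}\le C_K h_n^{-D}$ by \eqref{eq:kernel}, while $|Y_i|\le B$ by compact support of $Y$; this yields $|G_i - \E G_i| \le 2M/(n\psi_n h_n^D)$ with $M$ free of $a$ and $n$. For the variance I would use $Y_i^2 \le B^2$ to get $\Var(G_i) \le B^2(n\psi_n)^{-2}\,\E[(a^\top\bz((X_i-\xs)/h_n))^2\,K_{h_n}(X_i-\xs)^2]$, and then invoke \cref{lem:techlem1} with $\upsilon = 2$ and $\Theta = (a^\top\bz)^2$ (nonnegative and bounded uniformly over unit $a$ on the support of $K$), $\theta\equiv 1$: part (iii) bounds the source term by $h_n^{-D}$, and for the target term I would split into the same two regimes as in \cref{lem:conc_matrix}, namely $h_n\le\rho_n$ (where $n\psi_n = n_P + n_Q\rho_n^{d-D}$ and part (iv) gives $\rho_n^{d-D}h_n^{-D}$) and $\rho_n\le h_n$ (where $n\psi_n = n_P + n_Q h_n^{d-D}$ and part (v) gives $h_n^{d-2D}$). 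In both regimes the source and target pieces combine to $\sum_i\Var(G_i) \lesssim (n\psi_n h_n^D)^{-1}$, uniformly in $a$ and $n$.

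Finally I would apply Bernstein's inequality, \citep[Theorem~2.8.1]{Vershynin2018}, to $\sum_i(G_i - \E G_i)$ with variance proxy $\asymp(n\psi_n h_n^D)^{-1}$ and sup-norm proxy $\asymp(n\psi_n h_n^D)^{-1}$, obtaining $\P(|\sum_i(G_i-\E G_i)|\ge t) \le 2\exp(-n\psi_n h_n^D\, t^2/(c_1 + c_2 t))$ for all large enough $n$ and constants free of $a$ and $n$; then a union bound over $\mathcal{N}$ (of constant cardinality) together with $\|\hat s_n/\psi_n - s_n\|_2 \le 2\max_{a\in\mathcal{N}}|a^\top(\hat s_n/\psi_n - s_n)|$ and the substitution $t\mapsto t/2$ recovers the stated inequality after renaming constants. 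The single genuinely delicate step is the variance computation: it forces the dichotomy on $\rho_n$ versus $h_n$ already present in \cref{lem:conc_matrix} and the matching choice of part (iv) versus (v) of \cref{lem:techlem1}, arranged so that the normalization collapses to $(n\psi_n h_n^D)^{-1}$ in both cases. Everything else is a routine net-plus-Bernstein argument.
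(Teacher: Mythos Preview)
Your proposal is correct and mirrors the paper's own argument: the paper simply remarks that the proof follows that of \cref{lem:bound_centered_s_n_tilde} (net on the sphere plus scalar Bernstein) with $f^\star(X_i)$ replaced by the bounded $Y_i$, which is exactly the reduction you carry out, including the same variance bookkeeping via \cref{lem:techlem1} split into the $h_n\lessgtr\rho_n$ regimes. There is nothing to add.
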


\begin{proof}[Proof of \cref{lem:stochastic_conc}]
    From \eqref{eq:ourestim}, recall that $\hat f(\xs)$ is non-zero only when the minimum eigenvalue of $\hat S_n/\psi_n \ge \tau_n$ where $\tau_n=(n\psi_n h_n^D)^{-p}$ for some $p\ge 1$. For ease of presentation, define the event $A_n:=\{\lmn(S_n/\psi_n)\ge \tau_n\}$. Hence, by definition, $\hat f(\xs) = 0$ on $A_n^c$.  
    Furthermore, recall the definition of the even $\Omega_n$ from \eqref{eq:gev}. We start by showing that $\bbE[|\hat f(\xs)|] \lesssim 1$. To wit, observe that: 
    \begin{align*}
        \bbE\left[|\hat f(\xs)| \right] & =   \bbE\left[|\hat f(\xs)| \mathds{1}_{A_n}\right] \le \bbE\left[|\hat f(\xs)| \mathds{1}_{\Omega_n}\right] +  \bbE\left[|\hat f(\xs)| \mathds{1}_{A_n \cap \Omega_n^c}\right].
    \end{align*}
    The second term is $o(1)$ by \eqref{eq:meano1}. For the first term, note that, by \cref{lem:lbdeig}, we have: 
    \begin{align*}
        \bbE\left[\left|e_1^\top \left(\frac{\hat S_n}{\psi_n}\right)^{-1}\frac{\hat s_n}{\psi_n}\right| \mathds{1}_{\Omega_n}\right] \lesssim \bbE\left[\left\|\frac{\hat s_n}{\psi_n}\right\|_2\right] = O(1)
    \end{align*}
    where the final bound follows from \cref{lem:bounding_mean_part}. This shows $\bbE|\hat{f}(\xs)\lesssim 1$. 
\\\\
\noindent 
Having proved that $\hat f(\xs)$ has a finite $L_1$ norm, we now derive a  concentration bound for all $t > T_0$ (large enough, to be specified later). Observe that  
\begin{align*}
    & \bbP\left(\left|\hat f(\xs) - \bbE[\hat f(\xs)]\right| > t, \Omega_n\right) \le \bbP\left(\left|e_1^\top \left(\frac{\hat S_n}{\psi_n}\right)^{-1}\frac{\hat s_n}{\psi_n} - e_1^{\top} S_n^{-1}s_n + e_1^\top S_n^{-1} s_n - \bbE[\hat f(\xs)]\right| > t, \Omega_n\right).
\end{align*}
By \cref{lem:lbdeig} and \eqref{eq:snO1}, we have $e_1^\top S_n^{-1} s_n\lesssim$ and we just proved above that $\bbE|\hat f(\xs)|\lesssim 1$. Let us choose $T_0$ such that: 
$$
T_0 > 2\sup_{n\ge 1} \left(|e_1^\top S_n^{-1} s_n| + |\bbE[\hat f(\xs)]|\right) \,.
$$
We then have 
\begin{align*}
     & \bbP\left(\left|\hat f(\xs) - \bbE[\hat f(\xs)]\right| > t, \Omega_n\right)   \\
     & \le \bbP\left(\left|e_1^\top \left(\frac{\hat S_n}{\psi_n}\right)^{-1}\frac{\hat s_n}{\psi_n} - e_1^{\top} S_n^{-1}s_n\right| > t/2, \Omega_n\right)\\
     & \le \bbP\left(\left|e_1^\top \left(\frac{\hat S_n}{\psi_n}\right)^{-1}\frac{\hat s_n}{\psi_n} - e_1^\top \left(\frac{\hat S_n}{\psi_n}\right)^{-1}s_n\right| > t/2, \Omega_n\right) +  \bbP\left(\left|e_1^\top \left(\frac{\hat S_n}{\psi_n}\right)^{-1}s_n - e_1^{\top} S_n^{-1}s_n\right| > t/2, \Omega_n\right) \\
     & \le \bbP\left(\left\|\frac{\hat s_n}{\psi_n} - s_n\right\|_2 > \frac{t}{2\upsilon_1}\right) + \bbP\left(\left\|\frac{\hat S_n}{\psi_n} - S_n\right\|_{\rm op} > \frac{t}{2\upsilon_2}\right) \\
     & \le C_1\exp\left(-(n\psi_n h_n^D)\frac{(t/2\upsilon_1)^2}{c_1 + c_2 (t/2\upsilon_1)}\right) + C_2 \exp\left(-(n\psi_n h_n^D)\frac{(t /2\upsilon_2)^2}{c_1 + c_2 (t/2\upsilon_2)}\right) \,.
\end{align*}
The penultimate inequality holds for some $\upsilon_1,\upsilon_2>0$ by \cref{lem:lbdeig} and \eqref{eq:invbound}. The last inequality follows from \cref{lem:exp_conc_sn} and the second inequality follows from  \cref{lem:conc_matrix}. This completes the proof. 
\end{proof}
\begin{proof}[Proof of \cref{lem:eps_4_moment_bound}]
We write the polynomial vector function $\bz(\cdot)=(z_j(\cdot))_{j\ge 1}$, which is a vector of fixed dimension (not growing with $n$). 

    \begin{align*}
    & \E\left[\left\|\frac{\hat s_{n}-\tilde{s}_n}{\psi_n}\right\|_2^4\right] \\
    & = \E\left[\left(\sum_{j} \left(\frac{1}{n\psi_n} \sum_iz_j\left(\frac{X_i - \xs}{h_n}\right)K_{h_n}(X_i - \xs)(Y_i-f^\star(X_i))\right)^ 2\right)^2\right] \\
    & \lesssim \sum_{j} \E\left[\left(\frac{1}{n\psi_n} \sum_i z_j\left(\frac{X_i - \xs}{h_n}\right)K_{h_n}(X_i - \xs)(Y_i-f^\star(X_i))\right)^ 4\right] \\
    & = \frac{1}{(n\psi_n)^4} \sum_{j} \Bigg\{ \sum_{i=1}^n  \E\left[\left(z_j\left(\frac{X_i - \xs}{h_n}\right)K_{h_n}(X_i - \xs)(Y_i-f^\star(X_i))\right)^4\right]  \\
    & \qquad +  \sum_{i \neq i'} \E\left[\left(z_j\left(\frac{X_i - \xs}{h_n}\right)K_{h_n}(X_i - \xs)(Y_i-f^\star(X_i))\right)^2\right]\\ &\qquad\qquad\E\left[\left(z_j\left(\frac{X_{i'} - x}{h_n}\right)K_{h_n}(X_{i'} - x)(Y_{i'}-f^\star(X_{i'}))\right)^2\right]\Bigg\}\,.
    \end{align*}
    By \cref{lem:techlem1}, part (iii), we have: 
    \begin{align*}
        \bbE_P\left[\left(z_j\left(\frac{X - \xs}{h_n}\right)K_{h_n}(X-\xs)(Y-f^\star(\xs))\right)^{\upsilon}\right]\lesssim \frac{1}{h_n^{(\upsilon-1)D}}
    \end{align*}
    for $\upsilon=\{2,4\}$. Further, by using Assumptions \ref{asn:lowman}, parts (2) and (3), coupled with \cref{lem:techlem1}, parts (iv) and (v), we get: 
    \begin{align*}
        \bbE_Q\left[\left(z_j\left(\frac{X - \xs}{h_n}\right)K_{h_n}(X-\xs)(Y-f^\star(\xs))\right)^{\upsilon}\right]\lesssim \frac{(\rho_n\vee h_n)^{d-D}}{h_n^{(\upsilon-1)D}},
    \end{align*}
    for $\upsilon=\{2,4\}$. Combining these observations, we get: 
    
    \begin{align*}
     &\;\;\;\;\E\left[\left\|\frac{\hat s_{n}-\tilde{s}_n}{\psi_n}\right\|_2^4\right] \\ &\lesssim \frac{1}{(n\psi_n)^4} \left(\frac{n_P}{h_n^{3D}}+\frac{n_Q(\rho_n\vee h_n)^{d-D}}{h_n^{3D}}+\frac{n_P(n_P-1)}{h_n^{2D}}+\frac{n_Q(n_Q-1)(\rho_n\vee h_n)^{2(d-D)}}{h_n^{2D}}+2\frac{n_Pn_Q(\rho_n\vee h_n)^{d-D}}{h_n^{2D}}\right) \\
    & \le \frac{1}{(n\psi_n)^4}\left(\frac{n\psi_n}{h_n^{3D}}+\left(\frac{n_p}{h_n^D}\right)^2+\left(\frac{n_Q(\rho_n\vee h_n)^{d-D}}{h_n^D}\right)^2+2\left(\frac{n_P}{h_n^D}\frac{n_Q(\rho_n\vee h_n)^{d-D}}{h_n^D}\right)\right)\\ &\le \frac{1}{(n\psi_nh_n^D)^3}+\frac{1}{(n\psi_nh_n^D)^2}\,.
    \end{align*}
    This completes the proof. 
\end{proof}

\begin{proof}[Proof of \cref{lem:bound_centered_s_n_tilde}]
    We prove it by proving exponential concentration as in Lemma \ref{lem:conc_matrix}. Note that: 
    $$
    \frac{\tilde s_n}{\psi_n} - s_n = \frac{1}{n\psi_n}\sum_i \left(\bz\left(\frac{X_i - \xs}{h_n}\right)K_{h_n}(X_i - \xs)f^\star(X_i) - \mu_i\right) =: \sum_i H_i 
    $$
    where $\mu_i$ is the expectation of $i^{th}$ term. 
    Note that $H_i$ is a finite-dimensional vector with fixed dimension, say $p$, (not growing with $n$).  Suppose $\cN_{1/2}$ be a $1/2$-covering set of $\bbS^{p-1}$, i.e. given any $a \in \bbS^{p-1}$, we can find $a_{1/2} \in \cN_{1/2}$ such that $\|a - a_{1/2}\| \le 1/2$. Then we have: 
    \begin{align*}
    \|\sum_i H_i \| & = \sup_{a \in \bbS^{p-1}} \langle a, \sum_i H_i \rangle \\
    & = \sup_{a \in \bbS^{p-1}}\langle a - a_{1/2} + a_{1/2}, \sum_i H_i \rangle \\
    & \le \sup_{a \in \bbS^{p-1}}\langle a - a_{1/2} , \sum_i H_i \rangle + \max_{v \in \cN_{1/2}} \langle v, \sum_i H_i \rangle \\
    & \le \|\sum_i H_i\| \sup_{a \in \bbS^{p-1}} \|a - a_{1/2}\| + \max_{v \in \cN_{1/2}} \langle v, \sum_i H_i \rangle \\
    & \le \frac12 \|\sum_i H_i\|+ \max_{v \in \cN_{1/2}} \langle v, \sum_i H_i \rangle 
    \end{align*}
Hence, we have: 
$$
\|\sum_i H_i \| \le 2 \max_{v \in \cN_{1/2}} \langle v, \sum_i H_i \rangle \,.
$$
Therefore, we have: 
$$
\P(\|\sum_i H_i \| \ge 2t) \le \P(\max_{v \in \cN_{1/2}} \langle v, \sum_i H_i \rangle \le t) \le |\cN_{1/2}| \P(|\langle v, \sum_i H_i \rangle| \ge t) \,.
$$
The above calculation indicates that we need to establish the exponential bound for the random variable $\sum_i v^\top H_i$ for some $v \in \bbS^{p-1}$. We apply Bernstein's inequality (see \citet[Theorem 2.8.1]{Vershynin2018}) for this purpose. As $H_i$s are independent mean $0$ random vectors, we bound the variance as follows:    
    \allowdisplaybreaks
    \begin{align*}
        \sum_i \E[(v^\top H_i)^2] & \le \frac{1}{(n\psi_n)^2}\sum_i\E\left[\left(v^\top\bz\left(\frac{X_i - \xs}{h_n}\right)K_{h_n}(X_i - \xs)f^\star(X_i)\right)^2\right] \\
        & \lesssim \frac{1}{(n\psi_n)^2}\left\{\frac{n_P}{h_n^D} + \frac{n_Q(\rho_n\vee h_n)^{d-D}}{h^{D}}\right\} \le \frac{1}{n\psi_n h^D} \,.
    \end{align*}
    The above inequality follows from \cref{lem:techlem1}, parts (iii), (iv), and (v). 
    Furthermore, as $K(\cdot)$ is compactly supported, we have: 
    $$
    \frac{1}{n\psi_n}\left|v^\top\bz\left(\frac{X_i - \xs}{h_n}\right)K_{h_n}(X_i - \xs)f^\star(X_i) - \mu_i\right| \lesssim \frac{C_2}{n\psi_n h_n^d}\,.
    $$
    Therefore, by vector Bernstein's inequality, we have: 
    $$
    \P\left(\left|\sum_i (v^\top H_i)\right| \ge t\right) \lesssim  \exp\left(-n\psi_n h_n^D \frac{t^2}{c_1 + c_2 t}\right)\,.
    $$
    A standard tail integral computation (as in the proof of \cref{lem:conc_matrix}) then yields 
    \begin{align*}
        \E\left[\left\|\frac{\tilde s_n}{\psi_n} - s_n \right\|^4\right]  \lesssim \frac{1}{(n\psi_nh_n^D)^2} \,.
    \end{align*}
This completes the proof. 
    \end{proof}

\begin{proof}[Proof of \cref{lem:exp_conc_sn}]
The proof essentially follows the steps of the proof of Lemma \ref{lem:bound_centered_s_n_tilde}. From \eqref{eq:snO1}, note that $s_n\lesssim 1$. Recall the definition of $(\hat s_n/\psi_n)$ (see \eqref{eq:hsn}): 
    $$
\frac{\hat s_n}{\psi_n} = \frac{1}{n \psi_n}\sum_{i=1}^n \bz\left(\frac{X_i - \xs}{h_n}\right)K\left(\frac{X_i - \xs}{h_n}\right) Y_i 
    $$
    This has basically the same form as $\tilde{s}_n/\psi_n$ except that $f^\star(X_i)$ is replaced with $Y_i$. By leveraging the boundedness assumption on the $Y_i$s, we now repeat the same Bernstein concentration argument used in the proof of \cref{lem:bound_centered_s_n_tilde}. We skip the details for brevity.   
\end{proof}

\end{document}